\numberwithin{equation}{section} 
\theoremstyle{plain}
\newtheorem{prop}{Proposition}[section]
\newaliascnt{lem}{prop} 
\newtheorem{lem}[lem]{Lemma}
\Crefname{lem}{Lemma}{Lemmas}
\newaliascnt{thm}{prop} 
\newtheorem{thm}[thm]{Theorem}
\newaliascnt{cor}{prop} 
\newtheorem{cor}[cor]{Corollary}
\newaliascnt{hyp}{prop} 
\newtheorem{hyp}[lem]{Hypothesis}
\theoremstyle{definition}
\newaliascnt{defi}{prop} 
\newtheorem{defi}[defi]{Definition}
\Crefname{defi}{Definition}{Definitions}
\newaliascnt{problem}{prop} 
\newtheorem{problem}[problem]{Problem}
\newaliascnt{example}{prop} 
\newtheorem{example}[example]{Example}
\theoremstyle{remark}
\newaliascnt{remark}{prop} 
\newtheorem{remark}[remark]{Remark}
\def\equationautorefname~#1\null{%
	(#1)\null
}
\DeclareMathOperator{\spt}{spt} 
\DeclareMathOperator{\diam}{diam} 
\DeclareMathOperator{\lin}{span} 
\newcommand{\R}{\ensuremath{\mathbb{R}}}
\newcommand{\N}{\ensuremath{\mathbb{N}}}
\newcommand{\Z}{\ensuremath{\mathbb{Z}}}
\renewcommand{\S}{\ensuremath{\mathbb{S}}}
\newcommand*\diff{\mathop{}\!\mathrm{d}}
\newcommand{\defeq}{\vcentcolon=}
\newcommand{\eqdef}{=\vcentcolon}
\newcommand{\CalE}{\ensuremath{\mathcal{E}}}
\newcommand{\CalF}{\ensuremath{\mathcal{F}}}
\newcommand{\CalL}{\ensuremath{\mathcal{L}}}
\newcommand{\CalA}{\ensuremath{\mathcal{A}}}
\newcommand{\CalW}{\ensuremath{\mathcal{W}}}
\newcommand{\CalH}{\ensuremath{\mathcal{H}}}
\newcommand{\abs}[1]{\ensuremath{\lvert #1\rvert}}
\newcommand{\Abs}[1]{\ensuremath{\left\lvert #1\right\rvert}}
\newcommand{\norm}[2]{\ensuremath{\Vert #1 \Vert_{#2}}}
\DeclareMathOperator{\Tan}{Tan}
\DeclareMathOperator{\divergence}{div}
\DeclareMathOperator{\CalV}{\ensuremath{\mathcal{V}}}
\renewcommand\p@subfigure{\thefigure.}
\title{Li--Yau inequalities for the Helfrich functional and applications}
	\author{Fabian Rupp\thanks{Faculty of Mathematics, University of Vienna, Oskar-Morgenstern-Platz 1, 1090 Vienna, Austria. \texttt{fabian.rupp@univie.ac.at}} \and Christian Scharrer\thanks{Institute for Applied Mathematics, University of Bonn, Endenicher Allee 60, 53115, Bonn, Germany. \texttt{scharrer@iam.uni-bonn.de}}}
\begin{document}
	\maketitle
	\begin{abstract}
		\noindent\textbf{Abstract:}
	We prove a general Li--Yau inequality for the Helfrich functional where the spontaneous curvature enters with a singular volume type integral. In the physically relevant cases, this term can be converted into an explicit energy threshold that guarantees embeddedness. We then apply our result to the spherical case of the variational Canham--Helfrich model. If the infimum energy is not too large, we show existence of smoothly embedded minimizers. Previously, existence of minimizers was only known in the classes of immersed bubble trees or curvature varifolds.
	\end{abstract}
	
	\bigskip
	\noindent \textbf{Keywords:} Canham--Helfrich model, Li--Yau inequality, embeddedness, Willmore energy, biological membranes, oriented varifolds, Alexandrov immersions.
	
	\noindent \textbf{MSC(2020):} 53C42 (primary), 49Q10, 92C10 (secondary).

	\section{Introduction}

\emph{Lipid bilayers} make up the cellular membranes of most organisms. These extremely thin structures commonly form vesicles, so mathematically they are naturally modeled as two-dimensional structures, i.e.\ closed surfaces. The \emph{Canham--Helfrich model} \cite{Canham,Helfrich} characterizes the equilibrium shapes of lipid bilayers as (constrained) minimizers of a curvature dependent bending energy. For an oriented surface $\Sigma$ and an immersion $f\colon \Sigma\to\R^3$, the \emph{Helfrich energy} is defined by
\begin{align}\label{eq:defHelfrich}
	\CalH_{c_0}(f) \defeq\frac{1}{4}\int_{\Sigma}\abs{H-c_0n}^2\diff \mu.
\end{align}
Here, $H=H_f$ is the mean curvature vector of the immersion, i.e.\ the
trace of the second fundamental form,
$n=n_f$ is the unit normal induced by the orientation of $\Sigma$ (see \eqref{eq:def normal} below) and $\mu=\mu_f$ denotes the Riemannian measure associated to the pullback metric $g=g_f=f^{*}\langle\cdot,\cdot\rangle$ on $\Sigma$, where $\langle\cdot,\cdot\rangle$ denotes the Euclidean inner product in $\R^3$. The constant $c_0\in \R$ is called \emph{spontaneous curvature}. 
Since $H$ is normal to the surface, the Helfrich energy can also be written as
\begin{align}\label{eq:defHelfrich2}
	\CalH_{c_0}(f) =\frac{1}{4}\int_{\Sigma}(H_{\mathrm{sc}}-c_0)^2\diff \mu,
\end{align}
where $H_{\mathrm{sc}}\defeq \langle H, n\rangle$ is the scalar mean curvature with respect to $n$. We choose the inner unit normal such that the standard embedding of the round sphere of radius $r>0$ has positive scalar mean curvature $H_{\mathrm{sc}} = \frac{2}{r} >0$. In particular, if $c_0>0$, the Helfrich energy is zero for a round sphere of appropriately chosen radius.
 Reversing the orientation on $\Sigma$ corresponds precisely to replacing $n$ by $-n$. Thus, we have 
\begin{align}\label{eq:Helfrich orient transform}
	\CalH_{c_0}(f) = \CalH_{-c_0}(\hat{f}),
\end{align}
where $\hat{\Sigma}$ is the surface $\Sigma$ with reversed orientation and $\hat{f}\colon \hat{\Sigma}\to\R^3, \hat{f}(p)=f(p)$. Clearly, the Helfrich functional is not scale-invariant. However, we observe the following scaling property involving both arguments:
\begin{equation}
	\CalH_{c_0}(f) = \CalH_{\frac{c_0}{r}}(rf)\qquad\text{for $r>0$.}
\end{equation} 
In particular, we see that
\begin{equation} \label{eq:intro:convergence_Helfrich_to_Willmore}
	\lim_{r\to0+}\CalH_{c_0}(rf) = \lim_{r\to0+}\CalH_{rc_0}(f) = \CalH_{0}(f) \eqdef \CalW(f).
\end{equation}
The right hand side is well known as the \emph{Willmore energy}. In contrast to the Helfrich functional, the Willmore functional $\CalW$ is scale-invariant and does not depend on the unit normal field $n$ or the orientation of the underlying surface $\Sigma$.
One may also consider the $L^2$-CMC-deficit
\begin{equation}\label{eq:intro:CMC deficit}
	\bar\CalH(f)\defeq \inf_{c_0\in\R}\CalH_{c_0}(f) = \frac{1}{4}\int_\Sigma(H_{\mathrm{sc}} - \bar H_{\mathrm{sc}})^2\diff\mu,
\end{equation}
where $\bar H_{\mathrm{sc}} \defeq \fint_\Sigma H_{\mathrm{sc}}\diff\mu$ is the average scalar mean curvature. Also the functional $\bar \CalH$ is scale-invariant and does not depend on the orientation of $\Sigma$. For more details and corresponding results, see \Cref{sec:scale invariant} and \Cref{subsec:pos tot mean}. 

We are primarily interested in the following minimization problem suggested by Canham~\cite{Canham} and Helfrich~\cite{Helfrich} in order to study the shape of red blood cells. Our main contribution is stated in \Cref{thm:regularity_Helfrich_problem} below (see also \Cref{thm:regularity}).
\begin{problem}\label{problem:CH}
	Let $c_0\in\R$ and $A_0,V_0>0$ be given constants. Let the unit sphere $\S^2\subset\R^3$ be oriented by the inner unit normal. Minimize the functional $\CalH_{c_0}$ in the class of smooth embeddings $f\colon\S^2\to\R^3$ subject to the constraints
	\begin{equation}
		\CalA(f)\defeq\int_{\S^2} 1\diff\mu = A_0,\qquad \CalV(f)\defeq-\frac{1}{3}\int_{\S^2}\langle f,n\rangle \diff\mu = V_0.
	\end{equation}
\end{problem}
We consider the following example of \Cref{problem:CH} where the infimum cannot be attained by a smooth embedding, cf.\ \cite{MondinoScharrer1}.
\begin{example}\label{example:bubble-tree}
	Let $\iota_{\S^2}\colon \S^2\to\R^3$ be the inclusion of the unit sphere.
	Let $c_0 \defeq 2$, $A_0 \defeq 2\CalA(\iota_{\S^2})$, and $V_0\defeq2\CalV(\iota_{\S^2})$. There exists a sequence of smooth embeddings $f_k\colon \S^2\to \R^3$ satisfying $\CalA(f_k) = A_0$ and $\CalV(f_k) = V_0$ which converges in the varifold topology to the set $T\subset\R^3$ given by two translations of the unit sphere that meet in exactly one point (see Figure~\ref{fig:touching_spheres}) such that
	\begin{equation}\label{eq:intro:energy_convergence}
		\lim_{k\to\infty}\CalH_{c_0}(f_k) = 2\CalH_{c_0}(\iota_{\S^2}) = 0.
	\end{equation}
	To see this, let $\Sigma_{\ell,r}$ be the spherical $C^{1,1}$-regular surface that results by gluing two spherical caps at the ends of a cylinder of length $\ell\geq0$ with radius $r>0$. Denote with $\Sigma_{0,\ell,r}$ the disjoint union of $\Sigma_{\ell,1}$ with $\Sigma_{0,r}$ (a sphere with radius $r$) and with $\Sigma_{a,\ell,r}$ the spherical $C^{1,1}$-regular surface that results by connecting $\Sigma_{\ell,1}$ with $\Sigma_{0,r}$ through a catenoidal bridge of small neck size $a>0$, cf.\ Figure \ref{fig:dumbbell}. The sequence $(\Sigma_{k^{-1},0,1})_{k\in\N}$ satisfies \eqref{eq:intro:energy_convergence}. However, the gluing only gives $C^{1,1}$-regularity and the conditions on area and volume are not met. We will first adjust the isoperimetric ratio defined by $\mathcal I \defeq \mathcal A^3/\mathcal V^2$. A short computation reveals that $I_{a,\ell,r}\defeq\mathcal I(\Sigma_{a,\ell,r})$ satisfies
	\begin{equation}\label{eq:intro:iso_ratios}
		I_{0,\ell,1} > I_{0,0,1} \eqdef I_0 = A_0^3/V_0^{2} > I_{0,0,r}\qquad \text{for $\ell>0$ and $0<r<1$}.
	\end{equation}
	Moreover, $I_{a,\ell,r}$ depends continuously on $a\geq0$, $\ell\geq0$ and $r>0$. Hence, if $k\in\N$ and $I_{k^{-1},0,1} > I_0$, we can first choose $1-k^{-1}<r_k<1$ such that still $I_{k^{-1},0,r_k} > I_0$ and then $0<a_k<k^{-1}$ with $I_{a_k,\ell_k,r_k} = I_0$ where $\ell_k = 0$. If on the other hand $k\in\N$ and $I_{k^{-1},0,1} < I_0$, we can first choose $0<\ell_k<k^{-1}$ such that still $I_{k^{-1},\ell_k,1} < I_0$ and then $0<a_k < k^{-1}$ with $I_{a_k,\ell_k,r_k} = I_0$ where $r_k = 1$. Now, we let $S_k = \Sigma_{a_k,\ell_k,r_k}$ and choose $\lambda_k>0$ such that $\CalA(\lambda_k S_k) = A_0$ and $\CalV(\lambda_k S_k) = V_0$. Then, since $a_k\to 0$, $\ell_k\to 0$, and $r_k\to 1$ as $k\to \infty$, there holds $\CalA(S_k)\to A_0$, so $\CalV(S_k)\to V_0$ and $\lambda_k\to 1$. It follows that also the sequence $(\lambda_kS_k)_{k\in\N}$ satisfies \eqref{eq:intro:energy_convergence}.
    
\begin{figure}[h]
	\centering
	\caption{Visualization of the construction.    \label{fig:dumbbell}}
  	\vspace{0.5cm}	
	\includegraphics[width=0.4\textwidth]{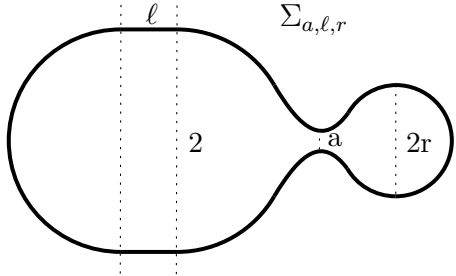}
\end{figure} 

	In order to have $C^\infty$-regularity, one can apply mollifications near the patching regions and subsequently compensate possible changes of area and volume using \cite[Lemma~2.1]{MondinoScharrer2} supported on the catenoid away from the mollifiers. 
		
	Consequently, the infimum of \Cref{problem:CH} is attained by the set $T$. Clearly, $T$ cannot be written as the image of a smooth immersion of $\S^2$. If on the other hand $f:\S^2\to \R^3$ is a smooth immersion with $\CalH_{c_0}(f) = 0$ then, by a result of Hopf~\cite[Chapter VI, Theorem 2.1]{Hopf}, the image of~$f$ must be a round sphere. In particular, if $f\colon \S^2 \to \R^3$ is a smooth embedding with $\CalH_{c_0}(f) = 0$, then $\CalA(f) \neq A_0$ and $\CalV(f)\neq V_0$. 
\end{example}  
In the terminology of \cite{MondinoRiviereACV,MondinoScharrer1}, the set $T$ in \Cref{example:bubble-tree} is the \emph{bubble tree} consisting of two unit spheres. Bubbling phenomena have also been observed in nature and are known as \emph{budding transition}~\cite{SeifertBerndlLipowsky}. Thus, the space of bubble trees appears to be a natural class in which to minimize the Helfrich functional. Indeed, in \cite[Theorem~1.7]{MondinoScharrer1}, the existence of minimizers for the Helfrich functional in the class of immersed bubble trees was proven. Each of the bubbles is given by a map $\S^2\to\R^3$ which outside of finitely many so called \emph{branch points} is a smooth immersion. For similar results, see \cite{ChoksiVeneroni,EichmannAGAG}. However, not all minimizers of \Cref{problem:CH} are necessarily bubble trees, consider for instance the case $c_0=2$, $A_0=\CalA(\iota_{\S^2})$, $V_0 = \CalV(\iota_{\S^2})$. One may conjecture that bubbling can only occur if the parameters $A_0$ and $V_0$ are within a certain range depending on $c_0$. Apart from the geometric relevance to obtain such qualitative results for the minimizers of \Cref{problem:CH}, 
it is of great interest to confirm mathematically that the Canham--Helfrich model is suitable for the study of red blood cells which are actually embedded --- rather than a bubble tree. As a first step in this direction it was proven in \cite{MondinoScharrer1} that there exists a constant $\varepsilon = \varepsilon(A_0,V_0)>0$ such that the minimizers are given by smooth embeddings provided $|c_0|<\varepsilon$. However, apart from the fact that $\varepsilon(A_0,V_0)$ is implicitly small, one would rather want to have a criterion of the following type: \emph{For all $c_0\in\R$, the \Cref{problem:CH} has a solution provided $A_0$ and $V_0$ are in a certain explicit range depending on $c_0$}. 

The proof of embeddedness of minimizers in~\cite{MondinoScharrer1} is based on the fact that for $|c_0|$ small, the Helfrich functional is close to the Willmore functional, see \eqref{eq:intro:convergence_Helfrich_to_Willmore}, 
and minimizers for $c_0=0$ are given by smooth embeddings, see~\cite{Schygulla}. A crucial tool to prove smoothness and embeddedness of the minimizers in~\cite{Schygulla} (i.e.\ solutions of \Cref{problem:CH} for $c_0=0$) is the following inequality of Li and Yau \cite[Theorem 6]{LiYau}. If $\Sigma$ is \emph{closed} (i.e.\ compact and without boundary), for any $x_0\in \R^3$ we have 
\begin{align}\label{eq:LY Willmore}
	\CalH^{0}(f^{-1}\{x_0\}) \leq \frac{1}{4\pi}\CalW(f),
\end{align}
where $\CalH^{0}$ denotes the counting measure. In particular, if $\CalW(f)<8\pi$, then $f$ must be an embedding. This observation 
also played an essential role in the study of the Willmore energy and related topics, 
cf.\ for instance \cite{SimonWillmore,KSRemovability,Schygulla,KuwertLiCAG,RiviereCrelle,KellerMondinoRiviere,RuppIso}.

In view of the fact that branch points have multiplicity at least $2$, such a tool could be the key to exclude bubbling in the Canham--Helfrich model. Apart from comparing the Helfrich energy with the Willmore energy for small $|c_0|$ via~\eqref{eq:intro:convergence_Helfrich_to_Willmore}, one might also try to make use of the Li--Yau inequality~\eqref{eq:LY Willmore} by estimating the Willmore energy from above in terms of the Helfrich energy (see \eqref{eq:bound_Willmore_by_Helfrich}):
\begin{equation}\label{eq:intro:bound_Willmore_by_Helfrich}
	\CalW(f) \leq 2\CalH_{c_0}(f) + \frac{1}{2}c_0^2\CalA(f).
\end{equation}
Again, if the right hand side is strictly less than $8\pi$, then the Li--Yau inequality~\eqref{eq:LY Willmore} implies that $f$ is an embedding. However, as one of our results reveals (see \Cref{thm:Willmore-inequality}), in the case of red blood cells where $c_0<0$ (see \cite{DeulingHelfrich}), there holds $\CalH_{c_0}(f)>4\pi$. 
In particular, the right hand side of \eqref{eq:intro:bound_Willmore_by_Helfrich} is already strictly larger than $8\pi$ and one cannot apply the Li-Yau inequality~\eqref{eq:LY Willmore} to deduce embeddedness of $f$.

Another naive attempt to apply \eqref{eq:LY Willmore} would be to show that $\CalW \leq \CalH_{c_0}$ for $c_0<0$. However, this is impossible by the following simple scaling argument. Let $f\colon \mathbb{S}^2\to\R^3$ be an immersion such that $\int_{\mathbb{S}^2}H_{\mathrm{sc}}\diff\mu<0$ (such an $f$ exists by \cite[Theorem 1.2]{DHMT16}). We find that 
\begin{align}\label{eq:Helfrich_below_Willmore}
\CalH_{c_0}(rf)-\CalW(rf) = -\frac{rc_0}{2}\int_{\mathbb{S}^2} H_{\mathrm{sc}}\diff \mu + \frac{r^2 c_0^2}{4}~\CalA(f)
\end{align}
which becomes negative if $r>0$ is sufficiently small as $c_0<0$.

\subsection{Main results}

Instead of applying~\eqref{eq:LY Willmore} by comparing the Helfrich energy with the Willmore energy, the aim of this article is to prove and apply a Li--Yau type inequality directly for the Helfrich functional. In the smooth setting, our multiplicity inequality reads as follows.

\begin{lem}\label{thm:LY smooth}
	Let $f\colon \Sigma\to\R^3$ be a smooth proper immersion of an oriented surface $\Sigma$ without boundary. Let $c_0\in\R$, $x_0\in \R^3$, 
	and suppose that the \emph{concentrated volume of $f$ at $x_0$} defined by
	\begin{align}\label{eq:def conc vol}
		\CalV_{c}(f, x_0) \defeq - \int_{\Sigma}\frac{\langle f-x_0, n\rangle}{\abs{f-x_0}^2}\diff\mu
	\end{align}
	exists. Then
	\begin{align}\label{eq:LY Immersion}
		\CalH^0(f^{-1}\{x_0\}) \leq \limsup_{\rho\to\infty} \frac{\mu(f^{-1}(B_\rho(x_0)))}{\pi\rho^2}+ \frac{1}{4\pi} \CalH_{c_0}(f) + \frac{c_0}{2\pi}\CalV_c(f,x_0).
	\end{align}
\end{lem}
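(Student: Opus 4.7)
The strategy is to derive a monotonicity-type identity from the first variation formula, then combine the splitting $H_{\mathrm{sc}} = (H_{\mathrm{sc}}-c_0)+c_0$ with a weighted Young inequality whose parameter is tuned so that two singular integrals cancel exactly.

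\emph{Step 1: first-variation identity.} I would test the first variation formula $\int_\Sigma\divergence^\Sigma X\,\diff\mu = -\int_\Sigma\langle H,X\rangle\,\diff\mu$ (valid for compactly supported Lipschitz $X$) against the radial cutoff field
\[X_{\sigma,\rho} \defeq (f-x_0)\bigl(\max(\sigma^{-2},|f-x_0|^{-2})-\rho^{-2}\bigr)_+,\]
which is supported in $f^{-1}(B_\rho(x_0))$. A direct computation using $\divergence^\Sigma\bigl[(f-x_0)/|f-x_0|^2\bigr] = 2\langle f-x_0,n\rangle^2/|f-x_0|^4$, combined with the limit $\sigma\to 0$ (so that the $2$-density of the smooth immersion at each preimage of $x_0$ contributes $1$, i.e.\ $\mu(f^{-1}(B_\sigma))/(\pi\sigma^2)\to\CalH^0(f^{-1}\{x_0\})$), produces the identity, valid for every $\rho>0$:
\begin{equation*}
  2\pi\CalH^0(f^{-1}\{x_0\}) + 2\int_{f^{-1}(B_\rho)}\frac{\langle f-x_0,n\rangle^2}{|f-x_0|^4}\,\diff\mu + \int_{f^{-1}(B_\rho)}\frac{\langle H,f-x_0\rangle}{|f-x_0|^2}\,\diff\mu = \frac{2\mu(f^{-1}(B_\rho))}{\rho^2} + \frac{1}{\rho^2}\int_{f^{-1}(B_\rho)}\langle H,f-x_0\rangle\,\diff\mu.
\end{equation*}

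\emph{Step 2: splitting and Young.} Writing $\langle H,f-x_0\rangle = (H_{\mathrm{sc}}-c_0)\langle f-x_0,n\rangle + c_0\langle f-x_0,n\rangle$ in the third term above, the $c_0$-piece integrates to a multiple of the partial concentrated volume $\CalV_c^{\rho}(f,x_0)\defeq -\int_{f^{-1}(B_\rho)}\langle f-x_0,n\rangle/|f-x_0|^2\,\diff\mu$. To the remaining $(H_{\mathrm{sc}}-c_0)$-piece I apply the weighted Young inequality $-xy\leq \tfrac{1}{8}x^2+2y^2$ with $x=H_{\mathrm{sc}}-c_0$ and $y=\langle f-x_0,n\rangle/|f-x_0|^2$, obtaining
\begin{equation*}
  -\int_{f^{-1}(B_\rho)}(H_{\mathrm{sc}}-c_0)\frac{\langle f-x_0,n\rangle}{|f-x_0|^2}\,\diff\mu \leq \frac{1}{2}\CalH_{c_0}(f) + 2\int_{f^{-1}(B_\rho)}\frac{\langle f-x_0,n\rangle^2}{|f-x_0|^4}\,\diff\mu.
\end{equation*}
The coefficient $2$ here is precisely what is required to cancel the $2\int\cdots$ term on the left-hand side of the identity in Step 1. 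Substituting and dividing by $2\pi$ yields
\begin{equation*}
  \CalH^0(f^{-1}\{x_0\}) \leq \frac{\mu(f^{-1}(B_\rho))}{\pi\rho^2} + \frac{1}{2\pi\rho^2}\int_{f^{-1}(B_\rho)}\langle H,f-x_0\rangle\,\diff\mu + \frac{1}{4\pi}\CalH_{c_0}(f) + \frac{c_0}{2\pi}\CalV_c^{\rho}(f,x_0).
\end{equation*}

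\emph{Step 3: passage to the limit and main obstacle.} As $\rho\to\infty$, properness of $f$ gives $\CalV_c^{\rho}(f,x_0)\to\CalV_c(f,x_0)$, and the area ratio produces the claimed $\limsup$ term. The only residual term is the error $\rho^{-2}\int_{f^{-1}(B_\rho)}\langle H,f-x_0\rangle\,\diff\mu$, which vanishes trivially when $\Sigma$ is compact. In the genuinely non-compact proper case this is the principal obstacle: Cauchy--Schwarz bounds it by $\|H\|_{L^2}\sqrt{\mu(f^{-1}(B_\rho))/\rho^2}$, and choosing a subsequence $\rho_k$ realising the $\limsup$ of the area ratio, combined with the finiteness of $\CalH_{c_0}(f)$, should allow this contribution to be absorbed. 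Getting the sharp constant $\tfrac{1}{4\pi}$ in front of $\CalH_{c_0}$ depends critically on the Young parameter chosen so that the two perpendicular-volume integrals cancel exactly.
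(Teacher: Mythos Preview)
Your Steps 1 and 2 are correct and amount to exactly what the paper does: the paper proves the inequality first for general oriented varifolds (via the monotonicity formula of \Cref{lem:monotonicity}, obtained by testing against the same radial field you use, and then completing the square --- which is equivalent to your weighted Young inequality with parameter $\tfrac18$), and then specializes to the immersed case via \Cref{ex:varifold_of_immersion}. So your direct argument is just the paper's varifold proof run for the immersion itself.

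The genuine gap is in Step 3. Your proposed bound $\rho^{-2}\int_{f^{-1}(B_\rho)}\langle H,f-x_0\rangle \leq \|H\|_{L^2}\sqrt{\mu(f^{-1}(B_\rho))/\rho^2}$ is useless in the noncompact case: finiteness of $\CalH_{c_0}(f)$ does \emph{not} imply $H\in L^2(\mu)$ when the area is infinite (the cylinder in \Cref{rem:LY varifold} has $\CalH_{c_0}=0$ but $\CalW=\infty$). And even if $\|H\|_{L^2}$ were finite, the product $\|H\|_{L^2}\sqrt{\theta^{*2}(\mu,\infty)}$ is merely bounded, not small --- there is nothing to ``absorb'' while keeping the sharp constant $\tfrac{1}{4\pi}$.

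The fix is to apply your same splitting $\langle H,f-x_0\rangle = (H_{\mathrm{sc}}-c_0)\langle n,f-x_0\rangle + c_0\langle n,f-x_0\rangle$ to the \emph{error term as well}, and then use a double-limit argument. For the $(H_{\mathrm{sc}}-c_0)$-piece, split $f^{-1}(B_\rho)=f^{-1}(B_\sigma)\cup f^{-1}(B_\rho\setminus B_\sigma)$: on $B_\sigma$ the integrand is $O(\sigma)$ and dividing by $\rho^2$ kills it; on $B_\rho\setminus B_\sigma$, Cauchy--Schwarz against $\|H_{\mathrm{sc}}-c_0\|_{L^2(\Sigma\setminus f^{-1}(B_\sigma))}$ (which tends to zero as $\sigma\to\infty$ since $\CalH_{c_0}(f)<\infty$) times $\sqrt{\mu(f^{-1}(B_\rho))/\rho^2}$. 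For the $c_0$-piece, use that on $f^{-1}(B_\rho)$ one has $\rho^{-2}\leq |f-x_0|^{-2}$, so $\rho^{-2}\int_{f^{-1}(B_\rho\setminus B_\sigma)}|\langle n,f-x_0\rangle|\leq \int_{\Sigma\setminus f^{-1}(B_\sigma)}|\langle n,f-x_0\rangle|/|f-x_0|^2$, which vanishes as $\sigma\to\infty$ precisely because $\CalV_c(f,x_0)$ is assumed to exist (i.e.\ the integrand is in $L^1$). Sending $\rho\to\infty$ first and then $\sigma\to\infty$ eliminates the error term entirely. This is how the paper handles it in the proofs of \Cref{thm:LY varifold} and \Cref{cor:LY varif area dec}.
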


In order to apply \Cref{eq:LY Immersion}, it is of crucial interest to determine the sign of the concentrated volume.
Despite singular, the integrand in \eqref{eq:def conc vol} is subcritical and locally integrable, see \Cref{lem:subcrit_integral} and \Cref{rem:subcrit_integral} below. Moreover, the integrand is nonpositive if $f[\Sigma]$ parametrizes the boundary of an open set in $\R^3$ which is star-shaped with respect to $x_0$ and $n$ is the inner unit normal, cf.\ \cite[9.4.2]{Evans}. However, such an immersion $f$ must be embedded a priori.

It turns out that the sign of the concentrated volume can be determined if we can find a suitable notion of inner unit normal, resulting in an appropriate divergence theorem.

\begin{defi}\label{defi:Alexandrov_immersions}
	We call a smooth immersion $f\colon \Sigma\to\R^3$ of a closed surface $\Sigma$ an \emph{Alexandrov immersion}, cf.\ \cite{Alexandrov}, if there exist a smooth compact 3-manifold $M$ with boundary $\partial M=\Sigma$, a smooth inner unit normal field $\nu$ to $\Sigma$ and a smooth immersion $F\colon M\to\R^3$ such that $F\vert_{\Sigma}=f$. The surface~$\Sigma$ is then necessarily orientable.
	Moreover, we choose the orientation on $\Sigma$ such that the induced normal field along $f$ (see \eqref{eq:def normal} below) satisfies $n=\diff F(\nu)$.
\end{defi}

Our orientation on $\Sigma$ does not coincide with the usual Stokes orientation. The reason for this is that we want to work with the inner unit normal such that the standard embedding of a round sphere has positive scalar mean curvature.

In the setting of \Cref{defi:Alexandrov_immersions}, the Li--Yau inequality \eqref{eq:LY Immersion} can be put into the following more convenient form.

\begin{thm}\label{thm:LY Alexandrov}
	Let $\Sigma$ be a closed surface and let $f\colon \Sigma \to \R^3$ be an Alexandrov immersion with  $f=F\vert_{\Sigma}$, $F\colon M\to\R^3$ as in \Cref{defi:Alexandrov_immersions}. Then for all $x_0\in \R^3$ we have
	\begin{align}
		\CalH^{0}(f^{-1}\{x_0\})\leq \frac{1}{4\pi} \CalH_{c_0}(f) + \frac{c_0}{2\pi}\int_{F[M]} \frac{\CalH^{0}(F^{-1}\{x\})}{\abs{x-x_0}^{2}}\diff \CalL^3(x).
	\end{align} 
	In particular, in case $c_0\leq 0$ we infer
	\begin{align}\label{eq:LYHelfrich}
		\CalH^{0}(f^{-1}\{x_0\})\leq \frac{1}{4\pi}\CalH_{c_0}(f).
	\end{align}
\end{thm}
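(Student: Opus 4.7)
The plan is to apply \Cref{thm:LY smooth} and identify the concentrated volume $\CalV_c(f,x_0)$ with the bulk integral on the right-hand side, via the divergence theorem on $M$ together with the area formula for the immersion $F$. First, since $\Sigma$ is closed and $f$ is smooth, $f(\Sigma)$ is compact, so $f^{-1}(B_\rho(x_0)) = \Sigma$ for all sufficiently large $\rho$, forcing the $\limsup$ term in \eqref{eq:LY Immersion} to vanish. Moreover, the integrand in \eqref{eq:def conc vol} is bounded near each preimage of $x_0$ in $\Sigma$ (since $\langle f - x_0, n\rangle$ vanishes to second order there), so $\CalV_c(f,x_0)$ exists. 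It then suffices to establish the identity
\begin{equation*}
\CalV_c(f,x_0) \;=\; \int_{F[M]} \frac{\CalH^0(F^{-1}\{x\})}{|x-x_0|^2}\, \diff\CalL^3(x).
\end{equation*}

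For this identity, I would equip $M$ with the pullback metric $g_M = F^{*}\langle\cdot,\cdot\rangle$, under which $F$ becomes a local isometry (as $\dim M = \dim \R^3$ and $F$ is an immersion, it is in fact a local diffeomorphism). On $\R^3\setminus\{x_0\}$ I take the smooth field $X(y) \defeq -(y-x_0)/|y-x_0|^2$, for which a direct calculation gives $\divergence X = -|y-x_0|^{-2}$; its $F$-pullback $Y \defeq (dF)^{-1}(X\circ F)$ on $M\setminus F^{-1}\{x_0\}$ then satisfies $\divergence_M Y = -|F-x_0|^{-2}$, while on $\Sigma = \partial M$ the convention $n = dF(\nu)$ from \Cref{defi:Alexandrov_immersions} gives
\begin{equation*}
\langle Y,\nu\rangle_{g_M} \;=\; \langle X\circ F,\, dF(\nu)\rangle \;=\; \langle X,n\rangle \;=\; -\frac{\langle f-x_0,n\rangle}{|f-x_0|^2}.
\end{equation*}
Compactness of $M$ together with the local diffeomorphism property forces $F^{-1}\{x_0\}$ to be finite, and each such preimage admits a neighborhood that is mapped diffeomorphically onto $B_\epsilon(x_0)$. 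Applying the divergence theorem on the complement of these neighborhoods and exploiting that $\nu$ is the \emph{inner} unit normal (so that the Stokes outer normal on $\Sigma$ is $-\nu$), the fluxes across the small inner boundaries are each bounded by $|\partial B_\epsilon(x_0)|\cdot\sup_{\partial B_\epsilon(x_0)}|X| = 4\pi\epsilon$, and thus vanish as $\epsilon\to 0^+$. Passing to the limit yields $\int_M |F-x_0|^{-2}\,\diff\mu_M = \CalV_c(f,x_0)$, and the area formula for the immersion $F:M\to\R^3$ rewrites the left-hand side as $\int_{\R^3}\CalH^0(F^{-1}\{x\})|x-x_0|^{-2}\,\diff\CalL^3(x)$, which equals the target integral since the multiplicity vanishes off $F[M]$.

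Substituting into \eqref{eq:LY Immersion} yields the main bound, and \eqref{eq:LYHelfrich} follows immediately by discarding the manifestly nonnegative $c_0$-term when $c_0\leq 0$. The principal technical obstacle is sign bookkeeping: the singular field $X$, the inner-normal convention of \Cref{defi:Alexandrov_immersions} (which inverts the usual Stokes orientation), and the orientation of $\Sigma$ chosen in that same definition must all be tracked consistently so that the divergence theorem produces precisely $\CalV_c(f,x_0)$ and not its negative. Once this is in place, the finiteness of $F^{-1}\{x_0\}$ and the $O(\epsilon)$ decay of the excised-sphere fluxes make the limiting argument routine.
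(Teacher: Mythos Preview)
Your argument is correct. Both you and the paper reduce to the identity
\[
\CalV_c(f,x_0)=\int_{F[M]}\frac{\CalH^0(F^{-1}\{x\})}{|x-x_0|^2}\diff\CalL^3(x),
\]
but you take a more direct path. The paper first proves \Cref{lem:Alexandrov_immersions} (the divergence identity on $M$ for \emph{compactly supported Lipschitz} fields), thereby placing Alexandrov immersions into the abstract class of varifolds with enclosed volume (\Cref{hyp:EmuH}); it then invokes the general varifold machinery (\Cref{lem:cvol-Alexandrov} via the truncated test field $\varphi(|x-x_0|)(x-x_0)$, and finally \Cref{cor:LY finite perimeter}). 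You instead apply \Cref{thm:LY smooth} directly and compute $\CalV_c$ in one shot by pulling back the singular field $-(y-x_0)/|y-x_0|^2$ to $M$, excising $\varepsilon$-neighbourhoods of $F^{-1}\{x_0\}$, and letting $\varepsilon\to0$. This is cleaner and more self-contained in the smooth setting; the paper's factored route pays off elsewhere because the same framework (\Cref{hyp:EmuH}, \Cref{lem:cvol-Alexandrov}, \Cref{cor:LY finite perimeter}) is reused for sets of finite perimeter and Lipschitz quasi-embeddings.

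Two minor points of polish. First, your claim that ``each such preimage admits a neighbourhood mapped diffeomorphically onto $B_\varepsilon(x_0)$'' is literally true only for interior preimages; a boundary preimage $p\in\Sigma\cap F^{-1}\{x_0\}$ yields a half-ball image. This does not affect the flux estimate (the excised boundary still has area $O(\varepsilon^2)$ and $|Y|=1/\varepsilon$ there), and the removed portion of $\Sigma$ contributes $O(\varepsilon^2)$ to the surface integral since, as you noted, the integrand is bounded near such points. Second, your justification that $\CalV_c(f,x_0)$ exists via $\langle f-x_0,n\rangle=O(|f-x_0|^2)$ near preimages of $x_0$ is correct and more elementary than the paper's route through \Cref{lem:subcrit_integral} and \Cref{rem:subcrit_integral}.
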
 

Due to round spheres, the above extension of \eqref{eq:LY Willmore} can only hold if $c_0\leq 0$ and $n$ is the inner unit normal. Of course, in view of \eqref{eq:Helfrich orient transform} we could simply reverse the orientation on $\Sigma$, but this will generically make it impossible to find an Alexandrov immersion where $M$ in \Cref{defi:Alexandrov_immersions} is compact. 

As a key application of our Li--Yau inequalities, we prove the following contribution to \Cref{problem:CH} based on the previous result in~\cite{MondinoScharrer1}.

\begin{thm}\label{thm:regularity_Helfrich_problem}
	Let $c_0\in \R$ and suppose $A_0,V_0>0$ satisfy the isoperimetric inequality $36\pi V_0^2\leq A_0^3$. 
	Set
	\begin{align}\label{eq:def eta}
	\eta(c_0, A_0, V_0)\defeq \inf\{ \CalH_{c_0}(f)\,|\,f\in C^{\infty}(\S^2;\R^3)\text{\,embedding,\,}\CalA(f)=A_0,\CalV(f)=V_0\}.
	\end{align}	
	There exists $\Gamma(c_0, A_0, V_0) \in \R$ such that if
	\begin{align}
		\eta(c_0, A_0, V_0) < 8\pi + \Gamma(c_0, A_0, V_0)
	\end{align}
	then the infimum in \eqref{eq:def eta} is attained. Moreover, there holds
	\begin{align}
		\Gamma(c_0, A_0, V_0)\geq \begin{cases}
			4\pi\left(\sqrt{1 + \frac{|c_0|V_0}{2\cdot 9^2 (A_0 + \frac{2}{3}|c_0|V_0)}} - 1\right) & \text{ if }c_0< 0, \\
			-6c_0(4\pi^2V_0)^\frac{1}{3}& \text{ if }c_0\geq 0.
		\end{cases}
	\end{align}			
	In particular $\Gamma(c_0, A_0, V_0)>0$ for $c_0<0$ and for any $c_0\leq 0$ there exist $A_0, V_0>0$ with $\eta(c_0, A_0, V_0)<8\pi$.
\end{thm}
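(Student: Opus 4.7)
The plan is to combine our Li--Yau inequality \Cref{thm:LY Alexandrov} with the bubble tree existence result from \cite[Theorem~1.7]{MondinoScharrer1} to rule out both bubbling and self-intersection in the limit of a minimizing sequence. First I would take a minimizing sequence $(f_k)\subset C^\infty(\S^2;\R^3)$ of embeddings satisfying $\CalA(f_k)=A_0$, $\CalV(f_k)=V_0$, and $\CalH_{c_0}(f_k)\to\eta$. Passing to the bubble tree compactification, where each bubble is the rescaled limit of a piece of an embedded sphere and hence naturally admits a filling from the closed ball, making it an Alexandrov immersion, produces a limit $(g_1,\ldots,g_N)$ with $\sum_i\CalA(g_i)=A_0$, $\sum_i\CalV(g_i)=V_0$, and $\sum_i\CalH_{c_0}(g_i)=\eta<8\pi+\Gamma$.

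Next I would argue that this limit must be a single embedded sphere. If $N\ge 2$ then two bubbles share an image point by connectedness of the tree, and if $N=1$ but $g_1$ is not embedded it has a double point; in either case some $x_0\in\R^3$ has total preimage multiplicity at least $2$. Summing \Cref{thm:LY Alexandrov} over the bubbles whose image contains $x_0$ gives
\[ 2 \le \frac{1}{4\pi}\sum_i \CalH_{c_0}(g_i) + \frac{c_0}{2\pi}\sum_i I_i, \qquad I_i \defeq \int_{F_i[M_i]} \frac{\CalH^0(F_i^{-1}\{x\})}{|x-x_0|^2}\diff\CalL^3(x), \]
which rearranges to $\sum_i\CalH_{c_0}(g_i)\ge 8\pi - 2c_0\sum_i I_i$. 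Depending on the sign of $c_0$ this contradicts $\eta<8\pi+\Gamma$ once one has a sufficiently sharp bound on $\sum_i I_i$.

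The main work is to produce these quantitative bounds. By the area formula and the divergence theorem one has $\int_{F_i[M_i]}\CalH^0(F_i^{-1}\{x\})\diff\CalL^3=\Vol(M_i)=\CalV(g_i)$, so the total mass of the measure in play equals $V_0$. For $c_0<0$ I would bound $\sum_i I_i$ from below by controlling $|x-x_0|$ from above via a Simon-type diameter estimate $\diam^2(F_i[M_i])\le C(A_i + \tfrac{2}{3}|c_0|V_i)$ adapted to the Helfrich setting through the isoperimetric hypothesis $36\pi V_0^2\le A_0^3$; this yields $\sum_i I_i\ge V_0/\diam^2$, which fed back into the Li--Yau inequality produces an implicit inequality quadratic in $\sum_i\CalH_{c_0}(g_i)$, whose solution gives the square-root formula for $\Gamma$. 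For $c_0\ge 0$ the sign of the correction is unfavourable; instead I would bound $\sum_i I_i$ from above by a rearrangement argument comparing the image of each filling with a ball of volume $V_0$ centered at $x_0$, yielding the linear bound $-6c_0(4\pi^2 V_0)^{1/3}$.

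With $N=1$ and $g_1$ embedded, the constraints $\CalA(g_1)=A_0$ and $\CalV(g_1)=V_0$ are preserved in the limit, and smoothness of $g_1$ follows from the regularity of bubble tree minimizers in \cite{MondinoScharrer1}, which yields smoothness away from branch points, of which there are now none. The main obstacle is precisely the quantitative control of the Alexandrov integral $\sum_i I_i$: the Willmore analogue \eqref{eq:LY Willmore} has no correction term and immediately gives the $8\pi$ threshold, whereas here one must couple the Li--Yau inequality to diameter and isoperimetric estimates involving $c_0$, $A_0$, $V_0$, and it is this coupling that produces the explicit but intricate form of $\Gamma$.
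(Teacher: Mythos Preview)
The central gap is the claim that each bubble $g_i$ is an Alexandrov immersion. The bubble tree limit produces only weak branched conformal immersions $g_i\in\CalF$; there is no mechanism that provides a smooth immersed $3$-manifold $F_i\colon M_i\to\R^3$ with $F_i|_{\partial M_i}=g_i$. Being a limit of pieces of embedded spheres does not help: the limit is not smooth, may have branch points, and the filling by a ball is lost in the limit. Without such a filling, \Cref{thm:LY Alexandrov} does not apply, and you have no control on the sign of $\CalV_c(g_i,x_0)$. The paper flags exactly this obstruction in \Cref{rem:proof:regularity} and in \Cref{ex:volume}: positive algebraic volume of the limit is not enough to make the concentrated volume nonnegative, so one \emph{cannot} apply the Li--Yau inequality directly to the bubble tree minimizer from \cite{MondinoScharrer1}.

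The paper's route is different in substance. It introduces the class $\mathcal Q$ of Lipschitz quasi-embeddings (Lipschitz immersions whose associated varifold satisfies the divergence identity \eqref{eq:gauss-green 2} with a set $E$ of finite perimeter), shows that smooth embeddings lie in $\mathcal Q$ (\Cref{lem:weak imm finite perim}), and proves a closure lemma (\Cref{lem:weak_closure}): along a minimizing sequence in $\mathcal Q$, the sets $E_k$ are precompact in $BV$, the concentrated volumes converge by \Cref{lem:varifold_convergence} and are uniformly H\"older in $x_0$ by \Cref{lem:continuity}, so the varifold Li--Yau inequality \Cref{cor:LY varif area dec} applies to the \emph{limit varifold} $V=\sum_i V^i$ and gives $\theta^2(\mu_V,\cdot)<2$. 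This forces $N=1$ and injectivity in one stroke. The explicit $\Gamma$ comes from bounding $\CalV_c(f_k,x_0)$ \emph{along the sequence} (where $f_k\in\mathcal Q$), using the diameter estimate \Cref{thm:upper_diameter_bound} for $c_0<0$ and the elementary ball comparison for $c_0\ge0$ (\Cref{rem:regularity}). Finally, to get the Euler--Lagrange equation one must know that perturbations $f+t\omega$ remain competitors; the paper uses the Li--Yau inequality again to keep $f_t$ injective and hence in $\mathcal Q$, a step your outline does not address.
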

	The explicit estimates of the constant $\Gamma(c_0, A_0, V_0)$ are due to further geometric applications of our Li--Yau inequality, see \Cref{thm:upper_diameter_bound} and \Cref{rem:regularity}.
As a consequence of \Cref{thm:regularity_Helfrich_problem}, the only missing step to exclude bubbling in \Cref{problem:CH} are estimates from above for $\eta(c_0, A_0, V_0)$. 
This is an interesting problem to be addressed in future research. One idea is to construct example surfaces that can be used to derive these bounds numerically.
 
\subsection{A suitable setup for the Li--Yau inequalities}

We now discuss the different notions of (generalized) surfaces that we want to prove and apply Li--Yau inequalities for. 
There are four key points to be considered. 
\begin{enumerate}[(i)]
	\item \label{item:unit_normal} In order to even define the Helfrich energy, the surface needs to have a unit normal vector field. In the smooth case, this naturally means that the surface is orientable.
	\item \label{item:double_points} One of the main applications of the classical Li--Yau inequality for the Willmore functional is to deduce embeddedness of immersions whose energy lies strictly below~$8\pi$. Therefore, the Li--Yau inequality should hold for surfaces that are not already embedded a priori, i.e.\ we want to allow for multiplicity points.
	\item \label{item:sign_volume} In order to actually apply the Li--Yau inequality for the Helfrich energy (see \Cref{thm:LY smooth}), it is necessary to determine the sign of the concentrated volume~\eqref{eq:def conc vol}. A sufficient tool to do so would be a divergence theorem.
	\item \label{item:compactness} Another important application of the classical Li--Yau inequality is to infer regularity and embeddedness of minimizers. It is therefore of interest to prove Li--Yau inequalities for weak surfaces that have good compactness properties.
\end{enumerate}

\paragraph{Oriented varifolds} The most general notion of surface that comprises all shapes shown in Figures~\ref{fig:touching_spheres}-\ref{fig:torus_and_sphere} and that naturally satisfies Items~\eqref{item:unit_normal}, \eqref{item:double_points}, and \eqref{item:compactness} are \emph{oriented varifolds,} cf.\ \cite{Hutchinson}. They generalize the idea of immersed submanifolds and allow for a generalized concept of mean curvature. Since they also possess strong compactness properties,
they have already been applied in several variational settings for the Canham--Helfrich model, see \cite{EichmannCalcVar,EichmannAGAG,BrazdaLussardiStefanelli}. 
Our most general version of the Li--Yau inequality for oriented varifolds, \Cref{thm:LY varifold}, is also applicable if the first variation has a nontrivial singular part $\beta$ (see 
\Cref{hyp:generalised_mean_curvature_bdr}). The reason for this generality is that we would like the Li--Yau inequality to be applicable for surfaces like the one shown in Figure~\ref{fig:lens}, see also \Cref{ex:lens}. Moreover, the Li--Yau inequality can then also be applied in the context of boundary problems, see \cite{EichmannCalcVar}. These are naturally formulated using \emph{curvature varifolds with boundary,} cf.\ \cite{Mantegazza}.

\paragraph{Alexandrov immersions}
In nature, one expects the principle of noninterpenetration of matter to hold true. As for vesicles that means there is a clearly defined inside. Nevertheless, membranes can be squeezed together as in Figures~\ref{fig:sausage}, \ref{fig:two_tears}, and~\ref{fig:pillow}. In order to satisfy a divergence theorem, a surface should possess a well defined inside.
In the smooth case, the so called \emph{Alexandrov immersions} (see \cite{Alexandrov} and \Cref{defi:Alexandrov_immersions} above) do satisfy a divergence theorem, see \Cref{lem:Alexandrov_immersions} below. They allow for multiplicity points as shown in Figures~\ref{fig:sausage} and~\ref{fig:Alexandrov}. Moreover, since the underlying $3$-manifold of an Alexandrov immersion does not have to be connected, they also allow for multiplicity points that arise from two touching surfaces as shown in Figure~\ref{fig:touching_spheres} or even two intersecting surfaces as shown in Figure~\ref{fig:intersecting_spheres}. However, the rotationally symmetric surface in Figure~\ref{fig:two_tears} is not an Alexandrov immersion. The Li--Yau inequality for Alexandrov immersions is stated in \Cref{thm:LY Alexandrov}.

\paragraph{Sets of finite perimeter}
A nonsmooth notion of surfaces that satisfy a divergence theorem are the boundaries of \emph{sets of finite perimeter,} cf.\ \cite[Chapter 5]{EvansGariepy}. As opposed to Alexandrov immersions, they allow for multiplicity points as shown in Figure~\ref{fig:two_tears} but they do not allow for the multiplicity points in Figure~\ref{fig:Alexandrov}. Sets of finite perimeter do have good compactness properties. Moreover, they comprise nonsmooth objects as shown in Figure~\ref{fig:lens} and discussed in \Cref{ex:lens}. In \Cref{sec:finiteperimeter}, we introduce a weak notion of Alexandrov immersions, the \emph{varifolds with enclosed volume}. Their underlying $3$-dimensional structure is a sequence of decreasing sets of finite perimeter.  
They allow for multiplicity points as in Figures~\ref{fig:touching_spheres}--\ref{fig:pillow} and Figure~\ref{fig:intersecting_spheres} and still satisfy a divergence theorem. The corresponding Li--Yau inequality is stated in \Cref{cor:LY finite perimeter}.

\paragraph{Currents}
Another important class of surfaces that naturally satisfies Items~\eqref{item:unit_normal} and~\eqref{item:compactness} above are \emph{currents} (see \cite[Chapter~4]{Federer}). A downside of this concept is that the current associated with the immersion of Figure~\ref{fig:pillow} corresponds to the surface shown in Figure~\ref{fig:current}. More precisely, a current induced by an immersion with a given unit normal field looses information about multiplicity points that arise by overlapping where the sum of the unit normal vectors vanishes, cf.\ also \eqref{eq:comparing_normal_vectors}. As a consequence, the varifold corresponding to the surface in Figure~\ref{fig:current} has a nontrivial singular part while the varifold corresponding to the immersion of Figure~\ref{fig:pillow} has no singular part. 

\paragraph{Lipschitz quasi-embeddings} In \Cref{sec:regularity}, we introduce a concept to model cellular membranes as weak immersions that can only self intersect tangentially. This is a new concept inspired by the previously developed \emph{Lipschitz immersions} of Rivi{\`e}re~\cite{RiviereCrelle}. The resulting class of surfaces is termed \emph{Lipschitz quasi-embeddings} and satisfies Items~\eqref{item:unit_normal}--\eqref{item:compactness} above.
They describe cellular shapes and comprise the surfaces in Figures~\ref{fig:sausage}, \ref{fig:two_tears} and \ref{fig:pillow}, but do not allow for interpenetration as in \ref{fig:Alexandrov}. It turns out that this class is well-suited for the variational discussion of the spherical Canham--Helfrich model which is why we rely on it for the proof of \Cref{thm:regularity_Helfrich_problem}.
\bigskip

The only kind of surface where the sign of the concentrated volume cannot be determined in general are those surfaces where the unit normal vector field changes between inner and outer, see Figure~\ref{fig:torus_and_sphere} and \Cref{ex:volume}. These are surfaces where interpenetration necessarily happens.

\begin{figure}[h]
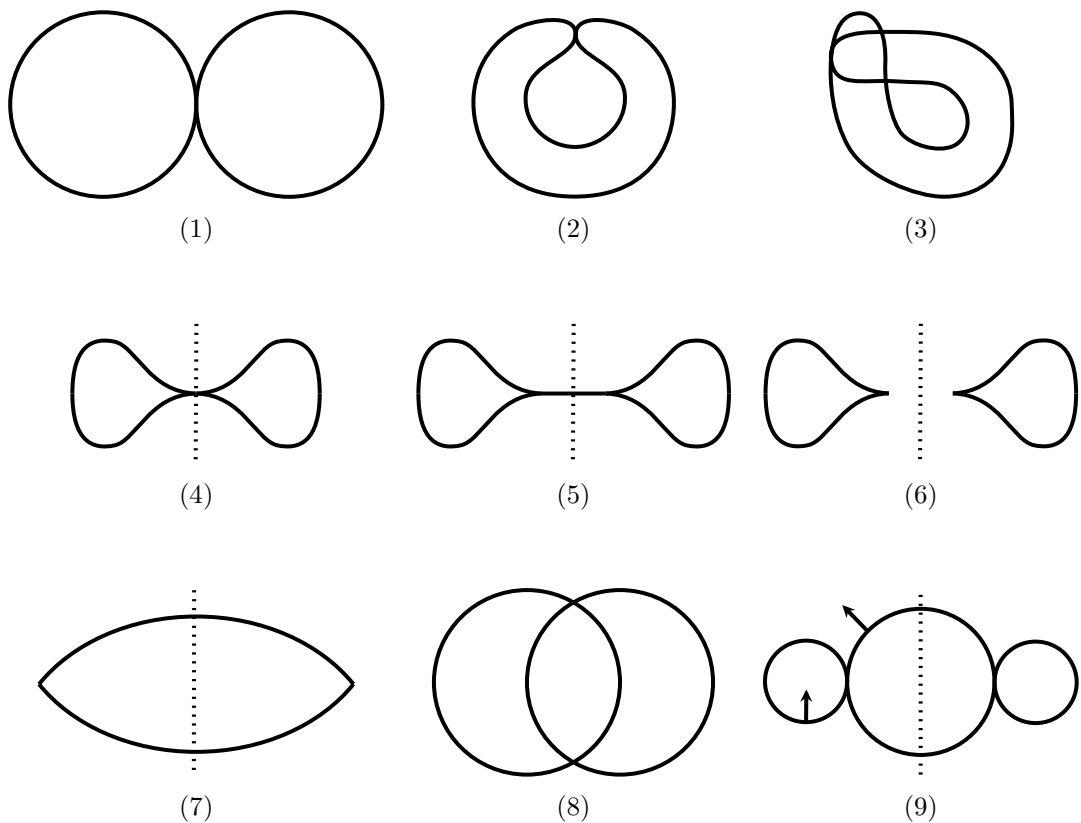

	\centering
	\caption{Profiles of surfaces with different types of multiplicity points. Dotted lines indicate rotationally symmetric surfaces.\label{fig:intersections}}
	\vspace{0.5cm}	
	\addtocounter{figure}{-1}
	\begin{tabular}{ccc}
		\subfloat[]{\centering
			\label{fig:touching_spheres}\includegraphics[height=2.5cm]{Pictures/touching_spheres.tikz}}
		&
		\subfloat[]{\centering
			\label{fig:sausage}\includegraphics[height=2.5cm]{Pictures/sausage.tikz}}
		&
		\subfloat[]{\centering\label{fig:Alexandrov}\includegraphics[height=2.5cm]{Pictures/Alexandrov.tikz}} 
		\vspace{1cm}
		\\
		\subfloat[]{\centering
			\label{fig:two_tears}\includegraphics[height=1.9cm]{Pictures/two_tears.tikz}}
		&
		\subfloat[]{\centering
			\label{fig:pillow}\includegraphics[height=1.9cm]{Pictures/pillow.tikz}}
		&
		\subfloat[]{\centering
			\label{fig:current}\includegraphics[height=1.9cm]{Pictures/current.tikz}}
		\vspace{1cm}
		\\
		\subfloat[]{\centering
			\label{fig:lens}\includegraphics[height=2.5cm]{Pictures/lens.tikz}}
		&
		\subfloat[]{\centering
			\label{fig:intersecting_spheres}\includegraphics[height=2.5cm]{Pictures/intersecting_spheres.tikz}}
		&
		\subfloat[]{\centering
			\label{fig:torus_and_sphere}\includegraphics[height=2.5cm]{Pictures/torus_and_sphere.tikz}}
	\end{tabular}
\end{figure}

\subsection{Structure of this article}
After a brief discussion of the geometric and measure theoretic background in \Cref{sec:preliminaries}, we examine the concentrated volume and its properties in \Cref{sec:volume}. This includes Hölder continuity in $x_0\in \R^3$ and continuity with respect to varifold convergence. In \Cref{sec:LY inequalities}, we then derive a monotonicity formula for the Helfrich functional, from which we deduce our most general Li--Yau inequality for varifolds, \Cref{thm:LY varifold}. After that, we review the notion of sets of finite perimeter and introduce the concept of varifolds with enclosed volume. The Li--Yau inequalities in the smooth setting, \Cref{thm:LY smooth} and \Cref{thm:LY Alexandrov} are then a direct application, see \Cref{sec:smooth}.
Finally, in \Cref{sec:applications}, we derive some geometric estimates and discuss implications of our results. This includes a nonexistence result for the penalized version of \Cref{problem:CH} (\Cref{subsec:diameter}), diameter bounds (\Cref{subsec:diameter}), the existence and regularity result for the Canham--Helfrich model, \Cref{thm:regularity_Helfrich_problem} (\Cref{sec:regularity}), and a criterion for positive total mean curvature (\Cref{subsec:pos tot mean}).

	\section{Preliminaries}\label{sec:preliminaries}

In this section, we will review some of the concepts and tools used throughout this article.

\subsection{Notation and definitions}
Let $\mu$ be a Radon measure over $\R^3$ and define the closed balls
\begin{equation}
	B_\rho(x) \defeq \{y\in \R^3\mid |x-y|\leq \rho\}
\end{equation}  
for all $x\in \R^3$ and $\rho>0$. For each nonnegative integer $m$ and $x\in \R^3$, the $m$-dimensional \emph{lower} and \emph{upper density} of $\mu$ at $x$ are defined by
\begin{equation}
	\theta^{m}_*(\mu,x) \defeq \liminf_{\rho \to 0+}\frac{\mu(B_\rho(x))}{\omega_m\rho^m},\qquad \theta^{*m}(\mu,x) \defeq \limsup_{\rho \to 0+}\frac{\mu(B_\rho(x))}{\omega_m\rho^m},
\end{equation}	 
where $\omega_m = \mathcal L^m(B_1(0))$ and $\CalL^m$ is the $m$-dimensional Lebesgue measure over $\R^m$. The $m$-dimensional \emph{density} of $\mu$ at $x$ is defined by
\begin{equation}
	\theta^{m}(\mu,x) \defeq \lim_{\rho \to 0+}\frac{\mu(B_\rho(x))}{\omega_m\rho^m},
\end{equation}
provided the limit exists. 
We define the \emph{support} of $\mu$ by 
\begin{equation}
	\spt \mu \defeq \R^3 \setminus\{x \in \R^3\mid \text{$\exists \rho>0$ such that $\mu(B_\rho(x)) = 0$}\}.
\end{equation}
The $m$-dimensional Hausdorff measure in Euclidean space is denoted with $\CalH^m$. We say that an integral exists if and only if it exists in the Lebesgue sense (i.e.\ its integrand is \emph{summable} in the terminology of \cite[2.4.2]{Federer}).

\subsection{Oriented $2$-varifolds} \label{sec:oriented_varifolds}
Let $\mathbb G^{\mathrm o}(3,2)$ be the set of \emph{oriented 2-dimensional subspaces of $\R^3$}. In view of \cite[3.2.28(2)]{Federer}, we identify $\mathbb G^{\mathrm o}(3,2)$ with 
\begin{equation}
	\{\xi \in \textstyle\bigwedge_2 \R^3 \mid \text{$\xi$ is simple, $|\xi|=1$}\}
\end{equation}
which is a smooth submanifold of the 2-nd exterior power $\bigwedge_2 \R^3$. In particular, $\mathbb G^{\mathrm o}(3,2)$ is a locally compact Hausdorff space. 

Following Hutchinson~\cite{Hutchinson}, 
we say that~$V$ is an \emph{oriented $2$-varifold} on $\R^3$, if and only if $V$ is a Radon measure over $\mathbb G^\mathrm{o}_2(\R^3)\defeq \R^3 \times \mathbb G^\mathrm{o}(3,2)$. The \emph{weight measure} $\mu_V$ on $\R^3$ is defined by 
\begin{equation}
	\mu_V(A) \defeq V\bigl(\{(x,\xi) \in \mathbb G_2^\mathrm{o}(\R^3) \mid x \in A\}\bigr) \qquad \text{whenever $A\subset \R^3$}.
\end{equation}%
It is the push forward of~$V$ under the projection $\R^3 \times \mathbb G^\mathrm{o}(3,2)\to \R^3$. 
The set of oriented $2$-varifolds in $\R^3$ is denoted by $\mathbb V_2^\mathrm{o}(\R^3)$.

For each $\xi \in \mathbb G^{\mathrm o}(3,2)$ we define the unoriented $2$-dimensional subspace $T(\xi)$ of $\R^3$ by
\begin{equation}
	T(\xi) \defeq \{v \in \R^3 \mid v \wedge \xi = 0\}.
\end{equation}
Since $\xi$ is simple, there exist $v_1,v_2\in\R^3$ such that $\xi = v_1\wedge v_2$. Moreover, $|\xi|=1$ implies that $v_1\wedge v_2 = e_1 \wedge e_2$ for $e_1\defeq v_1/|v_1|$ and $e_2\defeq\tilde v_2/|\tilde v_2|$ where $\tilde v_2\defeq v_2 - \langle e_1,v_2\rangle e_1$. In other words, each $\xi \in \mathbb G^\mathrm{o}(3,2)$ corresponds to an oriented orthonormal basis $(e_1,e_2)$ with $\xi = e_1\wedge e_2$ and $T(\xi) = \lin\{e_1,e_2\}$. In particular, each oriented $2$-varifold $V\in\mathbb V_2^\mathrm{o}(\R^3)$ induces a general (unoriented) $2$-varifold in the sense of \cite[Definition 3.1]{Allard}, given by the push forward of $V$ under the map $q(x,\xi) \defeq (x,T(\xi))$. Notice that the two weight measures of $V$ and~$q_\#V$ 
coincide. 

For all compactly supported vector fields $X \in C^1_c(\R^3;\R^3)$ and $2$-dimensional subspaces $T$ of $\R^3$ with orthonormal basis $\{e_1,e_2\}$, we define
\begin{equation}
	\divergence_T X(x) \defeq \sum_{j=1}^2 \langle e_j, \mathrm D X(x)e_j \rangle. 
\end{equation}
The \emph{first variation} of an oriented $2$-varifold $V$ in $\R^3$ is defined by
\begin{equation} \label{eq:first_variation} 
	\delta V \colon C^1_c(\R^3;\R^3) \to \R, \quad \delta V(X) \defeq \int_{\mathbb G^\mathrm{o}_2(\R^3)} \divergence_{T(\xi)} X(x) \diff V(x,\xi).
\end{equation}
Notice that $\delta V$ coincides with the first variation of the unoriented $2$-varifold $q_\#V$ as defined in~\cite[Definition 4.2]{Allard}. In other words, $\delta V$ does not depend on the orientation.


For $k=0, \dots, 3$ we may identify $k$-vectors in $\R^3$ with $(3-k)$-vectors by means of the Hodge star operator 
\begin{equation}
	\star \colon \textstyle\bigwedge_k \R^3 \to \textstyle\bigwedge_{3-k} \R^3.
\end{equation}
If $v_1, v_2\in \R^3\simeq \textstyle\bigwedge_1\R^3$, we have $\star(v_1 \wedge v_2) = v_1 \times v_2$, where $\times$ denotes the usual cross product on~$\R^3$. In particular, for all $\xi\in\mathbb G^\mathrm{o}(3,2)$ there holds $|\star\xi| = 1$. Moreover, we have $\star\star v=v$ for all $v\in \R^3 \simeq \textstyle\bigwedge_1\R^3$.

	If $V$ has locally bounded first variation, that is the total variation of $\delta V$ is a Radon measure, then $\delta V$ can be represented by integration as follows, where the singular part will be denoted by $\beta_V$ (i.e.\ $\beta_V=\norm{\delta V}{\mathrm{sing}}$, cf.\ \cite[4.3]{Allard}).
\begin{hyp} \label{hyp:generalised_mean_curvature_bdr}
	Let $V\in\mathbb V_2^\mathrm{o}(\R^3)$, $\eta \in L^{\infty}(\beta_V;\S^2)$, and $H \in L^1_\mathrm{loc}(\mu_V;\R^3)$. Suppose
	\begin{equation} \label{eq:generalised_mean_curvature_bdr}
		\delta V(X) = -\int_{\R^3} \langle X, H\rangle \diff \mu_V + \int_{\R^3} \langle X, \eta\rangle\diff\beta_V 
	\end{equation}
	and
	\begin{equation} \label{eq:perpendicular_bdr}
		H(x)\wedge \star \xi = 0 \qquad \text{for $V$-almost all $(x,\xi)$}.
	\end{equation}
\end{hyp}
The map $H$ is often referred to as \emph{generalized mean curvature} and $\spt\beta_V$ can be seen as \emph{generalized boundary}. 
Indeed, one can understand $\beta_V$ as the boundary measure. However, two boundary parts can fall together as in Figure~\ref{fig:lens}. 
Typically, one can determine $H$ and $\beta_V$ using Remark~4.4 and 4.7 in~\cite{Allard}. If $V$ is rectifiable (i.e.\ $q_\#V$ is rectifiable in the sense of~\cite[3.5]{Allard}) then the condition in~\eqref{eq:perpendicular_bdr} means that the generalized mean curvature is perpendicular. It is satisfied provided $V$ is an integral varifold, see \cite[Section~5.8]{Brakke}. In the absence of the singular part, \Cref{hyp:generalised_mean_curvature_bdr} simplifies as follows.

\begin{hyp} \label{hyp:generalised_mean_curvature}
	Let $V\in\mathbb V_2^\mathrm{o}(\R^3)$ and $H \in L^1_\mathrm{loc}(\mu_V;\R^3)$. Suppose 
	\begin{equation}\label{eq:generalised_mean_curvature}
		\delta V(X) = -\int_{\R^3} \langle X, H\rangle \diff \mu_V
	\end{equation}
	and
	\begin{equation} \label{eq:perpendicular}
		H(x)\wedge \star \xi = 0 \qquad \text{for $V$-almost all $(x,\xi)$}.
	\end{equation}
\end{hyp}

Let $c_0$ be a real number, and assume $V\in\mathbb V_2^\mathrm{o}(\R^3)$ and $H \in L^1_\mathrm{loc}(\mu_V;\R^3)$ satisfy \Cref{hyp:generalised_mean_curvature_bdr} for some $\eta \in L^\infty(\beta_V;\S^2)$. Then we define the \emph{Helfrich energy} 
\begin{equation}
	\CalH_{c_0}(V) \defeq \frac{1}{4}\int |H(x) - c_0(\star\xi)|^2\diff V(x,\xi) = \frac{1}{4}\int (\langle H(x), \star\xi \rangle - c_0)^2 \diff V(x,\xi).
\end{equation}
Notice that the Helfrich energy does not depend on the singular part of the first variation. This is analogous to the definition in \cite[Section~2]{EichmannCalcVar}. For $c_0 = 0$, we obtain the \emph{Willmore functional} $\CalW \defeq \CalH_{0}$.

\begin{remark}\label{rem:H L2 loc}
		Since $\mu_V$ and $V$ are Radon measures, we have $H\in L^2_\mathrm{loc}(\mu_V;\R^3)$ if and only if the function $(x,\xi)\mapsto H(x) - c_0(\star\xi)$ is a member of $L^2_\mathrm{loc}(V;\R^3)$. Indeed, given any Borel set $B$ in $\R^3$, the Cauchy--Schwarz inequality implies
		\begin{equation}\label{eq:bound_Helfrich_by_Willmore}
			\int_{B\times \mathbb G^\mathrm{o}(3,2)} |H(x) - c_0(\star\xi)|^2\diff V(x,\xi) \leq 2\int_B |H|^2\diff\mu_V +2 c_0^2 \mu_V(B).
		\end{equation}
		On the other hand,
		\begin{align}
			\int_B |H|^2\diff\mu_V & 
			\leq \int_{B\times \mathbb G^\mathrm{o}(3,2)} 2|H(x) - c_0(\star\xi)|^2\diff V(x,\xi) + 2c_0^2\mu_V(B).\label{eq:bound_Willmore_by_Helfrich}
		\end{align}
		In particular, $\CalH_{c_0}(V)<\infty$ implies $H\in L^2_\mathrm{loc}(\mu_V;\R^3)$.
		
\end{remark}

\subsection{Oriented varifolds induced by immersions}

A particular class of oriented varifolds will be given by immersions of oriented surfaces. Following \cite{Sakai}, we term a surface $\Sigma$ to be \emph{orientable}, if there exists an atlas $A = \{(U_\alpha,x_\alpha)\}_{\alpha\in I}$ such that the Jacobians $\det\mathrm D(x_{\alpha_1}\circ x_{\alpha_2}^{-1})$ of all coordinate transformations are positive. The members of $A$ are called \emph{positive} charts. If $f\colon \Sigma\to \R^3$ is a smooth immersion, then we define the induced smooth normal field $n$ along $f$ (the \emph{Gauss map}) by
\begin{equation}\label{eq:def normal}
	n\colon \Sigma\to\S^2, \quad n \defeq \frac{\partial_{x^1}f \times \partial_{x^2}f}{|\partial_{x^1}f \times \partial_{x^2}f|},
\end{equation}
whenever $x$ is a positive chart. 
Notice that since the Hodge star operator is an isometry, $\star n = \partial_{x^1}f\wedge\partial_{x^2}f/|\partial_{x^1}f\wedge\partial_{x^2}f|$ takes values in $\mathbb G^\mathrm{o}(3,2)$. Moreover, in the context of an immersion $f$, we will always denote by $\mu=\mu_f$ the Riemannian measure induced by the pullback metric $g=g_f\defeq f^*\langle\cdot,\cdot\rangle$, and we define by
\begin{align}\label{eq:area_and_volume}
	\CalA(f) \defeq \int_{\Sigma}1\diff\mu,\quad \CalV(f) \defeq -\frac{1}{3}\int_{\Sigma}\langle f, n\rangle\diff\mu\label{eq:defvol}
\end{align}
the area and the (algebraic) volume of $f$, provided the respective integral exists. If $f$ is an embedding and $n$ is the inner unit normal, $\CalV(f)$ yields the enclosed volume as a consequence of the divergence theorem, see \cite[Appendix A]{RuppVolumePreserving} for a more detailed discussion.

In the sequel, the immersion under consideration will usually be clear from the context, so we will drop the dependence on $f$ of the associated geometric quantities.

\begin{example}[Oriented varifold associated with immersed surface]\label{ex:varifold_of_immersion}
	Let $f\colon \Sigma\to \R^3$ be a smooth proper immersion of an oriented surface $\Sigma$ without boundary. 
	We define the oriented $2$-varifold $V \in \mathbb V_2^\mathrm{o}(\R^3)$ associated with $(\Sigma,f)$ by 
	\begin{equation}
		V(A) \defeq \mu \bigl(\{p \in \Sigma \mid (f(p), \star n(p)) \in A \}\bigr) \qquad \text{whenever $A\subset \mathbb G^\mathrm{o}_2(\R^3)$,}
	\end{equation}%
	i.e.\ $V$ is the push forward of $\mu$ under the map $\Sigma\to \R^3 \times \mathbb G^\mathrm{o}(3,2), p\mapsto (f(p), \star n(p))$.
	Since this map is continuous and proper, $V$ is indeed a Radon measure (see \cite[2.2.17]{Federer}). Notice that $T(\star n(p)) = \mathrm d f_p[T_p\Sigma]$ for $p\in\Sigma$. In view of \cite[Lemma~2.3]{Scharrer2}, there holds
	\begin{gather}
		\mu_V(B) = (f_\#\mu)(B) = \int_B\CalH^0(f^{-1}\{x\})\diff\CalH^2(x) \qquad\text{for all Borel sets $B$ in $\R^3$},\\
		\theta^2(\mu_V,x) = \CalH^0(f^{-1}\{x\}) \qquad \text{for all $x\in\R^3$}.
	\end{gather}	
	Moreover, by \cite[2.4.18]{Federer} and the area formula (cf. \cite[Lemma~2.3]{Scharrer2}), we have
	\begin{align} 
		\int_{\mathbb G_2^\mathrm{o}(\R^3)} k(x,\xi)\diff V(x,\xi) & = \int_\Sigma k(f(p),\star n(p))\diff\mu(p) \\ \label{eq:integral_push_forward}
		&= \int_{\R^3}\sum_{p\in f^{-1}\{x\}} k(x, \star n(p))\diff \CalH^2(x)
	\end{align}
	whenever $k\colon\mathbb G_2^\mathrm{o}(\R^3)\to\R$ is a nonnegative Borel function.
	
	Let $H_f\colon\Sigma\to \R^3$ be the classical mean curvature (vector) of $f$, i.e.\ the trace of the second fundamental form, 
	and define 
	\begin{equation}\label{eq:gmc_immersions}
		H(x) \defeq 
		\begin{cases}
			\frac{1}{\theta^2(\mu_V,x)}\sum_{p\in f^{-1}\{x\}}H_f(p) & \text{if $\theta^2(\mu_V,x) >0$} \\
			0&\text{if $\theta^2(\mu_V,x) =0$}.
		\end{cases}
	\end{equation}
	Then, $H\in L^\infty_{\mathrm{loc}}(\mu_V;\R^3)$ and in view of \cite[Example~2.4]{Scharrer2}, $H(x)\wedge \star\xi = 0$ for $V$-almost all $(x,\xi)$, and
	\begin{equation}
		\delta V(X) = -\int_{\R^3} \langle X,H\rangle \diff\mu_V.
	\end{equation}
	Thus, $V,H$ satisfy \Cref{hyp:generalised_mean_curvature}. 
	
	In the sequel, we will always use the above notation to distinguish $H_f$ as the classical mean curvature when $f$ is an immersion and $H$ defined by \eqref{eq:gmc_immersions} as the generalized mean curvature of the associated varifold. By \cite[Theorem~4]{SchatzleJDG}, there holds
	\begin{align}\label{eq:varifold_H_vs_H_f}
		H_f(p) = H(f(p))\quad \text{for $\mu$-almost all $p\in \Sigma$}.
	\end{align}
	Thus, by~\eqref{eq:integral_push_forward} we observe
	\begin{equation}
		\CalH_{c_0}(V) = \frac{1}{4}\int_{\Sigma} |H_f - c_0n|^2\diff\mu. \label{eq:H_energy_varifold_vs_immersion}
	\end{equation}
\end{example}
	
	\section{On the concentrated volume}\label{sec:volume}

In this section, we discuss the concentrated volume  \eqref{eq:def conc vol} in the context of varifolds.

\begin{defi}\label{defi:volume}
	Suppose $V\in\mathbb V_2^\mathrm{o}(\R^3)$ and $x_0\in\R^3$. Then we define the \emph{concentrated volume of $V$ at $x_0$} by
	\begin{equation}
		\CalV_c(V,x_0) \defeq - \int_{\mathbb G_2^\mathrm{o}(\R^3)}\frac{\langle x-x_0,\star\xi\rangle}{|x-x_0|^2}\diff V(x,\xi)
	\end{equation}	
	and the \emph{algebraic volume at $x_0$}
	\begin{equation}
		\CalV(V,x_0) \defeq - \frac{1}{3}\int_{\mathbb G_2^\mathrm{o}(\R^3)}\langle x-x_0,\star\xi\rangle\diff V(x,\xi)
	\end{equation}
	provided the respective integral exists.
\end{defi}
If the varifold $V$ is associated with an immersion $f\colon \Sigma\to\R^3$, then we also write $\CalV_c(f,x_0)$ instead of $\CalV_c(V,x_0)$. By \eqref{eq:integral_push_forward}, this is consistent with \eqref{eq:def conc vol}.
If $\Sigma$ is closed,
then we have $\CalV(V,x_0)=\CalV(f)$ for all $x_0\in \R^3$ after integration by parts. 
	

In general, the algebraic volume of an oriented varifold depends on the point $x_0$. Indeed, one may consider the varifold associated to the $2$-dimensional unit sphere in $\R^3$ where the upper hemisphere is oppositely oriented to the lower hemisphere. Moreover, the algebraic volume at $x_0$ exists if and only if the concentrated volume at $x_0$ exists, see \Cref{prop:cvol existence}.
	
\begin{lem}\label{lem:subcrit_integral}
	Suppose $m,\rho_0,D>0$, $\mu$ is a Radon measure over $\R^3$, $x_0\in\R^3$, and
	\begin{equation}
		\mu(B_\sigma(x_0)) \leq  D\sigma^m
	\end{equation}
	for all $0<\sigma<\rho_0$. Then, for all $1\leq p <m$, there exists $C(p,m,D) <\infty$ such that
	\begin{equation}
		\int_{B_\sigma(x_0)}\frac{1}{|x-x_0|^p}\diff\mu(x) \leq C(p,m,D) \sigma^{m-p} 
	\end{equation}
	for all $0<\sigma<\rho_0$. Moreover, $C(1,2,D) = 2D$, and if $\mu(\R^3)<\infty$ then
	\begin{equation}
		\int_{\R^3}\frac{1}{|x-x_0|^p}\diff\mu(x) \leq C(p,m,D) \rho_0^{m-p} + \frac{\mu(\R^3)}{\rho_0^p} < \infty.
	\end{equation}		
\end{lem}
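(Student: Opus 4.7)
The plan is to reduce the claim to a one-dimensional computation via Fubini and the layer-cake representation of a power function. The key identity is
\begin{equation*}
	\frac{1}{y^p} = p\int_y^\infty r^{-p-1}\diff r \qquad\text{for every $y>0$,}
\end{equation*}
which allows one to express $\abs{x-x_0}^{-p}$ as an integral of the indicator functions of balls centered at $x_0$. Interchanging the order of integration then converts the spatial integral into an integral of the measures $\mu(B_r(x_0))$, which is exactly where the hypothesized growth bound can be applied.

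\textbf{Key steps.} First, for any fixed $0<\sigma<\rho_0$, I would write
\begin{equation*}
	\int_{B_\sigma(x_0)}\frac{1}{\abs{x-x_0}^p}\diff\mu(x) = p\int_0^\infty r^{-p-1}\,\mu\bigl(B_\sigma(x_0)\cap B_r(x_0)\bigr)\diff r
\end{equation*}
by Tonelli's theorem. Splitting the outer integral at $r=\sigma$ (using $B_\sigma\cap B_r = B_{\min(r,\sigma)}(x_0)$) gives
\begin{equation*}
	p\int_0^\sigma r^{-p-1}\mu(B_r(x_0))\diff r + p\,\mu(B_\sigma(x_0))\int_\sigma^\infty r^{-p-1}\diff r.
\end{equation*}
On the first summand I apply $\mu(B_r(x_0))\le Dr^m$ (valid since $r<\sigma<\rho_0$) and perform the resulting elementary integral $\int_0^\sigma r^{m-p-1}\diff r = \sigma^{m-p}/(m-p)$, which converges precisely because $p<m$. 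The second summand equals $\mu(B_\sigma(x_0))\sigma^{-p}\le D\sigma^{m-p}$. Summing yields
\begin{equation*}
	\int_{B_\sigma(x_0)}\frac{1}{\abs{x-x_0}^p}\diff\mu(x) \le \left(\frac{pD}{m-p}+D\right)\sigma^{m-p} = \frac{mD}{m-p}\,\sigma^{m-p},
\end{equation*}
so one may take $C(p,m,D)=mD/(m-p)$, and in particular $C(1,2,D)=2D$ as asserted.

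\textbf{Global bound.} For the second assertion, assuming $\mu(\R^3)<\infty$, I would pick any $\sigma<\rho_0$, split
\begin{equation*}
	\int_{\R^3}\frac{1}{\abs{x-x_0}^p}\diff\mu = \int_{B_\sigma(x_0)}\frac{1}{\abs{x-x_0}^p}\diff\mu + \int_{\R^3\setminus B_\sigma(x_0)}\frac{1}{\abs{x-x_0}^p}\diff\mu,
\end{equation*}
bound the first term by $C(p,m,D)\sigma^{m-p}$ using the previous step, and bound the integrand of the second term pointwise by $\sigma^{-p}$. Letting $\sigma\nearrow\rho_0$ (monotone convergence on the inner integral) gives the stated estimate $C(p,m,D)\rho_0^{m-p} + \rho_0^{-p}\mu(\R^3)$, which is finite.

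\textbf{Expected difficulties.} There are essentially none: the argument is a textbook layer-cake computation. The only minor subtlety is the passage $\sigma\to\rho_0$ in the global bound, which requires one to observe that the hypothesis only gives the growth estimate for $\sigma$ strictly less than $\rho_0$, so the split must be performed at $\sigma<\rho_0$ before taking the limit rather than applying the local estimate directly at $\sigma=\rho_0$.
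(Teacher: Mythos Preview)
Your proof is correct and follows essentially the same layer-cake argument as the paper. The only cosmetic difference is the choice of integration variable: the paper writes $\int_{B_\sigma}|x-x_0|^{-p}\diff\mu = \int_0^\infty \mu(B_\sigma\cap B_{t^{-1/p}})\diff t$ and splits at $t=\sigma^{-p}$, whereas you use the radius $r$ directly via $|x-x_0|^{-p}=p\int_{|x-x_0|}^\infty r^{-p-1}\diff r$ and split at $r=\sigma$; these are related by the substitution $t=r^{-p}$ and both yield the same constant $C(p,m,D)=mD/(m-p)$, hence $C(1,2,D)=2D$.
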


\begin{remark}\label{rem:subcrit_integral}
	\begin{enumerate}[(i)]
		\item \label{item:subcrit_loc} Assume $V\in\mathbb V_2^\mathrm{o}(\R^3)$, $H\in L^{2}_{\mathrm{loc}}(\mu_V;\R^3)$ satisfy \Cref{hyp:generalised_mean_curvature}. By \cite[Theorem 3.6]{Scharrer2}, we find that the density $\theta^2(\mu_V,x_0)$ exists and is finite for all $x_0\in \R^3$. Hence there exist $\rho_0=\rho_0(V,x_0)>0$ and $D=D(V,x_0)<\infty$ such that
		\begin{align}\label{eq:Willmore_density_small}
			\mu_V(B_\sigma(x_0))\leq D\sigma ^2 \quad \text{ for all }0<\sigma<\rho_0, x_0\in \R^3.
		\end{align}
		This immediately implies $\mu_V(\{x_0\})=0$ for all $x_0\in \R^3$. By \Cref{rem:H L2 loc}, the condition $H\in L^2_{\mathrm{loc}}(\mu_V;\R^3)$ is in particular satisfied if $\CalH_{c_0}(V)<\infty$.
		
		\item \label{item:subcrit} If $V\in\mathbb V_2^\mathrm{o}(\R^3)$, $\mu_V(\R^3)<\infty$, and $H\in L^2(\mu_V;\R^3)$ satisfies \Cref{hyp:generalised_mean_curvature}, then the hypothesis of \Cref{lem:subcrit_integral} is satisfied for $m=2$, $\rho_0 =\infty$, and all $x_0 \in \R^3$ with $D = C\CalW(V)$ for some universal constant $0<C<\infty$. Indeed, by \cite[Appendix~(A.16)]{KSRemovability} there holds
		\begin{equation}\label{eq:Willmore_density}
			\mu_V(B_\sigma(x_0)) \leq  C\CalW(V) \sigma^2 \quad \text{ for all }\sigma >0, x_0\in\R^3.
		\end{equation}
	\end{enumerate}

\end{remark}

\begin{proof}[Proof of \Cref{lem:subcrit_integral}]
	Using Fubini's theorem, we compute (cf. \cite[Theorem~1.15]{Mattila})
	\begin{align}
		\int_{B_\sigma(x_0)}\frac{1}{|x-x_0|^p}\diff\mu(x) &= \int_0^\infty \mu(B_\sigma(x_0)\cap B_{t^{-1/p}}(x_0))\diff t \\
		&=\int_0^{\sigma^{-p}}\mu(B_\sigma(x_0))\diff t + \int_{\sigma^{-p}}^\infty \mu(B_{t^{-1/p}}(x_0))\diff t \\
		&\leq D\left(\int_0^{\sigma^{-p}}\sigma^m\diff t + \int_{\sigma^{-p}}^\infty t^{-m/p}\diff t\right) \\ 
		&=D\left(\sigma^{m-p} + \frac{p\sigma^{m-p}}{m-p}\right) = C(p,m,D)\sigma^{m-p}.
	\end{align}	
	The last statement follows by splitting the integral into $\R^3 = B_{\rho_0}(x_0) \cup (\R^3 \setminus B_{\rho_0}(x_0))$. 
\end{proof}

\begin{prop}\label{prop:cvol existence}
	Suppose $V\in \mathbb{V}^{\mathrm{o}}_2(\R^3)$ and $H\in L^2_{\mathrm{loc}}(\mu_V;\R^3)$ satisfy \Cref{hyp:generalised_mean_curvature} and assume that $\CalV(V,x_0)$ exists for some $x_0\in\R^3$. Then also $\CalV_c(V,x_0)$ exists.
\end{prop}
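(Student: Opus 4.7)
The goal is to show that the integrand
\[
	(x,\xi)\mapsto \frac{\langle x-x_0,\star\xi\rangle}{|x-x_0|^2}
\]
is $V$-summable. Since $|\langle x-x_0,\star\xi\rangle|\leq|x-x_0|$, the singularity at $x_0$ is subcritical of order $1$ in $|x-x_0|$, so the plan is to split the domain of integration into a small ball around $x_0$, where the subcritical singularity is handled by \Cref{lem:subcrit_integral}, and its complement, where $1/|x-x_0|^2$ is bounded and the hypothesis on $\CalV(V,x_0)$ takes over.

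In detail, since $H\in L^2_{\mathrm{loc}}(\mu_V;\R^3)$, \Cref{rem:subcrit_integral}~\eqref{item:subcrit_loc} provides $\rho_0=\rho_0(V,x_0)>0$ and $D=D(V,x_0)<\infty$ such that $\mu_V(B_\sigma(x_0))\leq D\sigma^2$ for $0<\sigma<\rho_0$. The first step is to apply \Cref{lem:subcrit_integral} with $m=2$, $p=1$, giving
\[
	\int_{B_{\rho_0}(x_0)}\frac{1}{|x-x_0|}\diff\mu_V(x) \leq 2D\rho_0<\infty.
\]
Using $|\langle x-x_0,\star\xi\rangle|\leq|x-x_0|$ and recalling that $\mu_V$ is the push-forward of $V$ under the projection $(x,\xi)\mapsto x$, this controls the contribution of the ball:
\[
	\int_{B_{\rho_0}(x_0)\times \mathbb G^\mathrm{o}(3,2)} \frac{|\langle x-x_0,\star\xi\rangle|}{|x-x_0|^2}\diff V(x,\xi) \leq \int_{B_{\rho_0}(x_0)} \frac{1}{|x-x_0|}\diff\mu_V(x)<\infty.
\]

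The second step handles the complement. For $|x-x_0|\geq\rho_0$ we simply estimate $1/|x-x_0|^2\leq \rho_0^{-2}$, so
\[
	\int_{(\R^3\setminus B_{\rho_0}(x_0))\times \mathbb G^\mathrm{o}(3,2)} \frac{|\langle x-x_0,\star\xi\rangle|}{|x-x_0|^2}\diff V(x,\xi) \leq \frac{1}{\rho_0^{2}}\int |\langle x-x_0,\star\xi\rangle|\diff V(x,\xi),
\]
which is finite because, by the paper's convention (summability in the Lebesgue sense), the existence of $\CalV(V,x_0)$ means precisely that $\langle x-x_0,\star\xi\rangle$ is $V$-summable. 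Adding the two bounds shows that the integrand defining $\CalV_c(V,x_0)$ is $V$-summable, and hence $\CalV_c(V,x_0)$ exists.

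There is no real obstacle here: the argument is a direct combination of the subcritical integral estimate of \Cref{lem:subcrit_integral}, the local density bound at $x_0$ guaranteed by $H\in L^2_{\mathrm{loc}}(\mu_V;\R^3)$ via \Cref{rem:subcrit_integral}~\eqref{item:subcrit_loc}, and the assumed summability inherent in the existence of $\CalV(V,x_0)$. The only point worth noting is that the splitting radius must be chosen as the $\rho_0$ supplied by the density bound rather than an arbitrary radius, since the density estimate from \Cref{rem:subcrit_integral}~\eqref{item:subcrit_loc} is only guaranteed on scales below $\rho_0$.
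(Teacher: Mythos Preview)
Your proof is correct and follows essentially the same approach as the paper: both split the integral at a small radius $\rho_0$, control the inner part via \Cref{lem:subcrit_integral} and \Cref{rem:subcrit_integral}\eqref{item:subcrit_loc}, and bound the outer part by $\rho_0^{-2}\int|\langle x-x_0,\star\xi\rangle|\diff V$ using the assumed summability from $\CalV(V,x_0)$. The only cosmetic point is that \Cref{lem:subcrit_integral} literally gives the bound for $0<\sigma<\rho_0$, so you could either work with some $\sigma<\rho_0$ or note that the bound extends to $\sigma=\rho_0$ by monotone convergence.
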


\begin{proof}
	Splitting the integral, for $\rho_0>0$ we find
	\begin{align}
		&\int_{\mathbb{G}^\mathrm{o}_2(\R^3)}\frac{\abs{\langle x-x_0, \star\xi\rangle}}{\abs{x-x_0}^2}\diff V(x,\xi) 
		\\&\qquad \leq \int_{B_{\rho_0}(x_0)}\frac{1}{\abs{x-x_0}}\diff\mu_V(x) + \frac{1}{\rho_0^2}\int_{\mathbb{G}^{\mathrm{o}}_2(\R^3)}\abs{\langle x-x_0, \star\xi\rangle}\diff V(x,\xi).
	\end{align}
	By \Cref{lem:subcrit_integral} and \Cref{rem:subcrit_integral}\eqref{item:subcrit_loc}, on the right hand side the first integral is finite for $\rho_0>0$ small, whereas the second integral is finite since $\CalV(V,x_0)$ exists.
\end{proof}

We recall the concept of convergence of oriented varifolds.

\begin{defi}\label{defi:varifold_convergence}
	Suppose $V_k$ is a sequence in $\mathbb V_2^\mathrm{o}(\R^3)$. Then we say that \emph{$V_k$ converges to $V$ in $\mathbb V_2^\mathrm{o}(\R^3)$} and write
	\begin{equation}
		V_k\to V\qquad\text{in $\mathbb V_2^\mathrm{o}(\R^3)$ as $k\to\infty$}
	\end{equation}
	if and only if $V\in\mathbb V_2^\mathrm{o}(\R^3)$ and
	\begin{equation}
		\int_{\mathbb G_2^\mathrm{o}(\R^3)}\varphi(x,\xi)\diff V_k(x,\xi)\to \int_{\mathbb G_2^\mathrm{o}(\R^3)}\varphi(x,\xi)\diff V(x,\xi) \qquad\text{as $k\to\infty$}
	\end{equation}
	for all continuous functions $\varphi\colon \mathbb G_2^\mathrm{o}(\R^3)\to\R$ with compact support.
\end{defi}

\begin{lem}\label{lem:varifold_convergence}
	Suppose $V_k$ is a sequence in $\mathbb V_2^\mathrm{o}(\R^3)$, $V \in \mathbb V_2^\mathrm{o}(\R^3)$, $H_k\in L^2(\mu_{V_k};\R^3)$ and $H\in L^2(\mu_V;\R^3)$ satisfy \Cref{hyp:generalised_mean_curvature}, 
	\begin{equation}\label{eq:area_and_energy_bound}
		\sup_{k\in\N} \left(\mu_{V_k}(\R^3)+\CalW(V_k)\right) <\infty 
	\end{equation}%
	and 
	\begin{equation}\label{eq:varifold_convergence}
		V_k\to V\qquad\text{in $\mathbb V_2^\mathrm{o}(\R^3)$ as $k\to\infty$}.
	\end{equation}
	Then for all $x_0\in\R^3$, the concentrated volume converges: $\lim_{k\to\infty}\CalV_c(V_k,x_0)=\CalV_c(V,x_0)$.
\end{lem}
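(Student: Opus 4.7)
The plan is to treat the integrand
\[
\varphi(x,\xi) \defeq -\frac{\langle x-x_0,\star\xi\rangle}{|x-x_0|^2}
\]
via a three-region decomposition around $x_0$: a small ball $B_\rho(x_0)$ where $\varphi$ has its singularity, the complement of a large ball $B_R(x_0)$ where $\varphi$ is small, and the bounded annular region in between. On the middle region $\varphi$ is continuous, so after multiplying by a continuous cutoff the integral falls directly under the definition of varifold convergence; the near- and far-field contributions then need to be controlled uniformly in $k$.

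First I would secure a uniform local mass bound. Since $H_k\in L^2(\mu_{V_k};\R^3)$ and $\sup_k(\mu_{V_k}(\R^3)+\CalW(V_k))<\infty$, \Cref{rem:subcrit_integral}\eqref{item:subcrit} produces a constant $D<\infty$ independent of $k$ and $x_0$ such that $\mu_{V_k}(B_\sigma(x_0))\leq D\sigma^2$ for all $\sigma>0$. Standard lower semicontinuity of mass and of the Willmore functional under varifold convergence yields $\mu_V(\R^3)<\infty$ and $\CalW(V)<\infty$, so the same bound applies to $V$. Because $|\star\xi|=1$, we have $|\varphi(x,\xi)|\leq 1/|x-x_0|$, and \Cref{lem:subcrit_integral} with $p=1$, $m=2$ gives
\[
\left|\int_{\{|x-x_0|<\rho\}}\varphi\,\mathrm dV_k\right| \leq \int_{B_\rho(x_0)}\frac{\mathrm d\mu_{V_k}(x)}{|x-x_0|} \leq 2D\rho
\]
uniformly in $k$ (and analogously for $V$), while the far-field bound is immediate:
\[
\left|\int_{\{|x-x_0|>R\}}\varphi\,\mathrm dV_k\right| \leq \frac{\mu_{V_k}(\R^3)}{R}.
\]

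For the middle region I would pick a continuous cutoff $\chi_{\rho,R}\colon\R^3\to[0,1]$ supported in $B_{2R}(x_0)\setminus B_{\rho/2}(x_0)$ and identically $1$ on $B_R(x_0)\setminus B_\rho(x_0)$. Then $(x,\xi)\mapsto \chi_{\rho,R}(x)\varphi(x,\xi)$ is continuous and compactly supported on $\mathbb G_2^{\mathrm o}(\R^3)$, so the varifold convergence \eqref{eq:varifold_convergence} yields
\[
\int \chi_{\rho,R}(x)\varphi(x,\xi)\,\mathrm dV_k(x,\xi)\longrightarrow \int \chi_{\rho,R}(x)\varphi(x,\xi)\,\mathrm dV(x,\xi).
\]
Given $\varepsilon>0$, one first fixes $\rho$ small and $R$ large so that the near and far tails are each below $\varepsilon/4$ for all $k$ and for $V$; taking $k\to\infty$ on the middle region then delivers $|\CalV_c(V_k,x_0)-\CalV_c(V,x_0)|<\varepsilon$.

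The only nontrivial ingredient is the uniform local density bound $\mu_{V_k}(B_\sigma(x_0))\leq D\sigma^2$ with $D$ independent of both $k$ and $x_0$; this is exactly the Kuwert--Sch{\"a}tzle type estimate recorded in \Cref{rem:subcrit_integral}\eqref{item:subcrit}, and crucially uses $\sup_k\CalW(V_k)<\infty$. After that, the proof is a routine three-region splitting exploiting that $|x-x_0|^{-1}$ is subcritical against a $2$-dimensional Radon measure, so one does not need any further regularity of $V$ beyond \Cref{hyp:generalised_mean_curvature} and the hypotheses supplied.
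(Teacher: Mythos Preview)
Your proposal is correct and follows essentially the same approach as the paper: a three-region decomposition with a continuous cutoff on the annulus, the near-field controlled via \Cref{lem:subcrit_integral} and the uniform density bound from \Cref{rem:subcrit_integral}\eqref{item:subcrit}, and the far-field controlled by the uniform mass bound. The paper's proof is slightly more terse but uses the identical ingredients and structure.
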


\begin{proof}
	Let $x_0\in\R^3$, $0<\sigma <\rho<\infty$, and pick a continuous function $\chi\colon\R^3\to\R$ with compact support in $\R^3\setminus\{x_0\}$ such that $0\leq \chi\leq 1$ and $\chi(x) = 1$ for $\sigma\leq |x - x_0|\leq \rho$. Define the function
	\begin{equation}
		\varphi\colon \R^3\times\mathbb G^\mathrm{o}(3,2)\to\R, \qquad \varphi(x,\xi) \defeq \chi(x) \frac{\langle x-x_0,\star\xi\rangle}{|x-x_0|^2}.
	\end{equation}
	Then $\varphi$ has compact support, $\varphi$ is continuous, and thus, by~\eqref{eq:varifold_convergence},
	\begin{equation}\label{eq:varifold_convergence1}
		\int_{\mathbb G_2^\mathrm{o}(\R^3)}\varphi\diff V_k \to \int_{\mathbb G_2^\mathrm{o}(\R^3)}\varphi\diff V \qquad \text{as $k\to\infty$}.
	\end{equation}
	Let
	\begin{equation}
		A\defeq \sup_{k\in\N}\bigl(\mu_{V_k}(\R^3)+\mu_V(\R^3)\bigr),\qquad D\defeq \sup_{k\in\N}\bigl(\CalW(V_k) + \CalW(V)\bigr).
	\end{equation}
	Then, by \eqref{eq:area_and_energy_bound}, \Cref{lem:subcrit_integral}, and \Cref{rem:subcrit_integral}\eqref{item:subcrit}, we have 
	\begin{equation}
		\sup_{k\in\N}\left(\int_{B_\sigma(x_0)}\frac{1}{|x-x_0|}\diff\mu_{V_k}(x)+ \int_{B_\sigma(x_0)}\frac{1}{|x-x_0|}\diff\mu_V(x)\right)\leq C(D)\sigma 
	\end{equation}
	and
	\begin{equation}
		\sup_{k\in\N}\left(\int_{\R^3\setminus B_\rho(x_0)}\frac{1}{|x-x_0|}\diff\mu_{V_k}(x)+\int_{\R^3\setminus B_\rho(x_0)}\frac{1}{|x-x_0|}\diff\mu_V(x)\right)\leq \frac{C(A)}{\rho}. 
	\end{equation}
	Since $|\varphi(x,\xi)|\leq 1/|x-x_0|$ for all $(x,\xi)\in \mathbb{G}_2^\mathrm{o}(\R^3)$, it follows 
	\begin{equation}
		|\CalV_c(V,x_0) - \CalV_c(V_k,x_0)|\leq \left|\int_{\mathbb G_2^\mathrm{o}(\R^3)}\varphi\diff V_k - \int_{\mathbb G_2^\mathrm{o}(\R^3)}\varphi\diff V\right| +C(D)\sigma + \frac{C(A)}{\rho}.
	\end{equation}%
	Now, the conclusion follows from the convergence in~\eqref{eq:varifold_convergence1}.
\end{proof}	
	
\begin{lem}\label{lem:continuity}
	Suppose $V\in\mathbb V_2^\mathrm{o}(\R^3)$, $\spt\mu_V$ is compact, and $H\in L^2(\mu_V;\R^3)$ satisfies \Cref{hyp:generalised_mean_curvature}.
	Then the concentrated volume $\CalV_c(V,\cdot)$ is H\"older continuous with exponent $\alpha$ for any $0<\alpha<1$ and constant $C=C(\alpha,V)$ depending monotonically nondecreasing on $\mu_V(\R^3)$ and $\CalW(V)$. 
\end{lem}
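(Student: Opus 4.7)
Set $F(z)\defeq z/|z|^2$ for $z\neq 0$, so that the integrand defining $\CalV_c(V,x_0)$ at $(x,\xi)$ equals $-\langle F(x-x_0),\star\xi\rangle$. I would reduce the claim to the two ingredients (i) a pointwise inequality
\begin{equation*}
    |F(x-x_0)-F(x-y_0)|\,\leq\, C_\alpha\, h^{\alpha}\left(\frac{1}{|x-x_0|^{1+\alpha}}+\frac{1}{|x-y_0|^{1+\alpha}}\right),\qquad h\defeq|x_0-y_0|,
\end{equation*}
and (ii) a uniform bound on the subcritical potential $x_0\mapsto\int_{\R^3}|x-x_0|^{-(1+\alpha)}\diff\mu_V(x)$. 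Once both are in place, $|\langle F(x-x_0)-F(x-y_0),\star\xi\rangle|\leq|F(x-x_0)-F(x-y_0)|$ together with Fubini allows one to integrate (i) against $V$ and then apply (ii) separately to $x_0$ and $y_0$, producing the desired estimate $|\CalV_c(V,x_0)-\CalV_c(V,y_0)|\leq C(\alpha,V)\,h^{\alpha}$.

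\textbf{Pointwise inequality.} Set $r\defeq\min(|x-x_0|,|x-y_0|)$ and distinguish two regimes. If $h\geq r/2$, then the triangle inequality gives $|x-x_0|,|x-y_0|\leq 3h$, so each summand $|F(x-x_j)|=1/|x-x_j|$ is directly dominated by $Ch^{\alpha}/|x-x_j|^{1+\alpha}$ (since $|x-x_j|^\alpha\leq(3h)^\alpha$), and a triangle inequality closes the estimate. If $h<r/2$, the line segment joining $x-x_0$ and $x-y_0$ avoids $B_{r/2}(0)$; since $|DF(z)|\leq C/|z|^2$, the mean value theorem yields $|F(x-x_0)-F(x-y_0)|\leq Ch/r^2$, and the relation $h<r/2$ lets one trade the factor $h$ for $h^{\alpha}r^{1-\alpha}$, giving $Ch^{\alpha}/r^{1+\alpha}$, which is majorised by the right-hand side of the pointwise inequality.

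\textbf{Uniform potential bound.} Remark 3.3(ii) provides the global density estimate $\mu_V(B_\sigma(x_0))\leq C\CalW(V)\sigma^2$ for all $\sigma>0$ and all $x_0\in\R^3$, and $\mu_V(\R^3)<\infty$ because $\spt\mu_V$ is compact. For any $\alpha\in(0,1)$ one has $p\defeq 1+\alpha<2\eqdef m$, so \Cref{lem:subcrit_integral} together with its postscript, applied with $\rho_0$ chosen to balance the two contributions (for instance $\rho_0^2\sim\mu_V(\R^3)/\CalW(V)$), gives
\begin{equation*}
    \sup_{x_0\in\R^3}\int_{\R^3}\frac{\diff\mu_V(x)}{|x-x_0|^{1+\alpha}}\,\leq\, C(\alpha)\,\CalW(V)^{(1+\alpha)/2}\,\mu_V(\R^3)^{(1-\alpha)/2},
\end{equation*}
which is manifestly monotone nondecreasing in both $\CalW(V)$ and $\mu_V(\R^3)$, as demanded by the statement.

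\textbf{Main difficulty.} The only delicate point is the calibration of the pointwise inequality so that the mean-value estimate in the far regime and the raw singularity bound in the near regime produce the same $h$-exponent. The subcritical weight $|x-x_j|^{-(1+\alpha)}$ with $\alpha<1$ is precisely what makes both sides balance, and it also explains why the argument cannot reach $\alpha=1$: at that endpoint \Cref{lem:subcrit_integral} fails since $p=m$, and the uniform potential bound breaks down.
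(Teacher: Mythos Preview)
Your proof is correct and follows essentially the same strategy as the paper: a near/far dichotomy (you at the level of a pointwise kernel inequality, the paper at the level of splitting the domain into $B_{2\sigma}(x_0)$ and its complement), followed by the subcritical potential bound from \Cref{lem:subcrit_integral} with $p=1+\alpha$ and the global density estimate of \Cref{rem:subcrit_integral}(ii). Your packaging into a single symmetric pointwise bound $|F(x-x_0)-F(x-y_0)|\leq C_\alpha h^\alpha(|x-x_0|^{-1-\alpha}+|x-y_0|^{-1-\alpha})$ is a little cleaner than the paper's asymmetric domain split --- in particular it spares you the paper's separate treatment of the case $|x_0-x_1|\geq 1$ --- but the underlying ideas are identical.
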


\begin{proof}
	Let $0<\alpha<1$, $x_0,x_1\in\R^3$ with $0<|x_0-x_1|\leq 1$, and abbreviate $\sigma\defeq |x_0-x_1|$, $A\defeq \mu_V(\R^3)$ and $D\defeq \CalW(V)$. By \Cref{lem:subcrit_integral} and \Cref{rem:subcrit_integral}\eqref{item:subcrit} there holds
	\begin{equation}
		\int_{B_{2\sigma}(x_0)}\frac{1}{|x-x_0|}\diff \mu_V(x) \leq 4D|x_0-x_1|\leq C(D)|x_0-x_1|^\alpha 
	\end{equation}
	and, since $B_{2\sigma}(x_0)\subset B_{3\sigma}(x_1)$,
	\begin{equation}
		\int_{B_{2\sigma}(x_0)}\frac{1}{|x-x_1|}\diff \mu_V(x) \leq \int_{B_{3\sigma}(x_1)}\frac{1}{|x-x_1|}\diff \mu_V(x) \leq 6D|x_0-x_1|\leq C(D)|x_0-x_1|^\alpha.
	\end{equation}
	Thus, we have
	\begin{align}
		&|\CalV_c(V,x_0) - \CalV_c(V,x_1)|\\ \label{eq:cont1} 
		&\quad \leq C(D)|x_0-x_1|^\alpha + \int_{\pi^{-1}[\R^3\setminus B_{2\sigma}(x_0)]}\left|\frac{\langle x-x_0,\star\xi\rangle}{|x-x_0|^2} - \frac{\langle x-x_1,\star\xi\rangle}{|x-x_1|^2}\right|\diff V(x,\xi). 
	\end{align}
	where $\pi:\R^3\times \mathbb G^\mathrm{o}(3,2)\to \R^3$ is the projection. For all $x\in \R^3\setminus B_{2\sigma}(x_0)$ we have 
	\begin{equation}
		2|x_0-x_1| = 2\sigma \leq |x-x_0|, \qquad |x-x_0|\leq 2|x-x_0| - 2|x_0-x_1| \leq 2|x-x_1|
	\end{equation}%
	and thus $|x-x_1|\geq|x-x_0|/2$. Since also $|x_0-x_1|\leq |x-x_0|$, we infer
	\begin{align}
		&\left|\frac{\langle x-x_0,\star\xi\rangle}{|x-x_0|^2} - \frac{\langle x-x_1,\star\xi\rangle}{|x-x_1|^2}\right| \\
		&\quad \leq \left|\frac{\langle x-x_0,\star\xi\rangle}{|x-x_0|^2} - \frac{\langle x-x_1,\star\xi\rangle}{|x-x_0|^2}\right|+ \left|\frac{\langle x-x_1,\star\xi\rangle}{|x-x_0|^2} - \frac{\langle x-x_1,\star\xi\rangle}{|x-x_0||x-x_1|}\right| \\
		&\quad \quad + \left|\frac{\langle x-x_1,\star\xi\rangle}{|x-x_0||x-x_1|} - \frac{\langle x-x_1,\star\xi\rangle}{|x-x_1|^2}\right|\\
		&\quad\leq 2\frac{|x_0 - x_1|}{|x-x_0|^2} + \frac{|x_0 - x_1|}{|x-x_0||x-x_1|} \leq 4\frac{|x_0-x_1|^\alpha}{|x-x_0|^{1+\alpha}}.  
	\end{align}
	Integrating this inequality and applying \Cref{lem:subcrit_integral} for $p=1+\alpha$, $m=2$ and $\rho_0 = 1$, \eqref{eq:cont1} becomes
	\begin{align}
		|\CalV_c(V,x_0) - \CalV_c(V,x_1)|&\leq C(D)|x_0-x_1|^\alpha + 4|x_0-x_1|^\alpha \int_{\R^3}\frac{1}{|x-x_0|^{1+\alpha}}\diff\mu_V(x) \\
		&\leq \bigl[C(D) + 4C(\alpha,D) + 4A\bigr]|x_0-x_1|^\alpha = C(\alpha,A,D)|x_0-x_1|^\alpha.
	\end{align}
	For $|x_0-x_1|\geq 1$ we apply \Cref{lem:subcrit_integral} to see
	\begin{equation}
		|\CalV_c(V,x_0) - \CalV_c(V,x_1)| \leq 2(2D + A) \leq C(A,D) |x_0-x_1|^\alpha 
	\end{equation}
	which concludes the proof.
\end{proof}

\begin{example} \label{ex:volume}
	Consider $S\defeq\partial B_1(0)\subset \R^3$, the round sphere with radius one centered at the origin. Moreover, for $r>0$ let $T_r\subset \R^3$ be the torus which is obtained by revolving a circle with radius $r$ and center $(1+r,0)$ (in the $xz$-plane) around the $z$-axis. Note that if we revolve the corresponding disk instead of the circle, we obtain a full torus $T_r^{\mathrm{full}}$ with $\partial T_r^{\mathrm{full}} = T_r$. We now define a smooth unit normal $n$ on $S\cup T_r$ by taking $n$ to be the outer unit normal on $S$ and the inner unit normal on $T_r$, cf.\ Figure~\ref{fig:torus_and_sphere}. It is not difficult to see that $S\cup T_r$ is the image of a $C^{1,1}$-immersion $f\colon \mathbb{S}^2\to\R^3$.

	By standard formulas in geometry, the algebraic volume can be computed as
		\begin{align}\label{eq:example torus vol}
			\CalV(f) = -\CalL^{3}(B_1(0)) + \CalL^{3}(T_r^{\mathrm{full}}) = -\frac{4\pi}{3}+2\pi^2 r^2(1+r).
		\end{align}
	Let $x_0 = 0$ be the origin.
		Using that $\abs{x-x_0}=1$ for $x\in S$ and applying the divergence theorem to $T_r^{\mathrm{full}}$, the concentrated volume is given by
		\begin{align}
			\CalV_c(f,x_0) =-\frac{4\pi}{3} + \int_{T_r^{\mathrm{full}}} \frac{1}{\abs{x-x_0}^2}\diff \CalL^3(x).
		\end{align}
		Clearly $\abs{x-x_0}>1$ for $\CalL^3$-almost every $x\in T_r^{\mathrm{full}}$, so that
		\begin{align}\label{eq:example torus conc vol}
			\CalV_c(f,x_0)< -\frac{4\pi}{3} + \int_{T_r^{\mathrm{full}}}\diff \CalL^3(x) = \CalV(f).
		\end{align}
		By means of \eqref{eq:example torus vol}, we thus find $r>0$ such that $\CalV(f)=0$ but $\CalV_c(f,x_0)<0$. Slightly increasing the radius, we have $\CalV_c(f,x_0)<0<\CalV(f)$ by a continuity argument. Lastly, we replace a small disk on $S$ by a thin dent, such that the new surface $\tilde{S}$ satisfies $x_0\in \tilde{S}$ and such that $\tilde{S}\cup T_r$ is still the image of an immersion $\tilde{f}\colon \mathbb{S}^2\to\R^3$. Making this dent sufficiently thin and smoothing, we can achieve that $\CalV_c(\tilde{f},x_0)<0<\CalV(\tilde{f})$ is still satisfied and $\tilde{f}$ is smooth. Therefore, positive algebraic volume does not imply positive concentrated volume, even at points in the support.
\end{example}

\section{The Li--Yau inequality in the varifold setting} \label{sec:LY inequalities}

\subsection{A monotonicity formula}
Our first essential observation is the following lemma, which can be seen as an extension of the monotonicity formula due to Simon \cite[(1.2)]{SimonWillmore}.
We follow the varifold approach in \cite[Appendix A]{KSRemovability} relying on
the first variation identity and examine the additional terms originating from the spontaneous curvature.

\begin{lem} \label{lem:monotonicity}
	Suppose $V \in \mathbb V_2^\mathrm{o}(\R^3)$, $\eta\in L^\infty(\beta_V;\S^2)$, and $H \in L^2_{\mathrm{loc}}(\mu_V;\R^3)$ satisfy \Cref{hyp:generalised_mean_curvature_bdr}. Let $x_0\in\R^3$ and abbreviate $B_r\defeq B_r(x_0)$, $A_r\defeq B_r\times \mathbb{G}^\mathrm{o}(3,2)$ for all $r>0$. Then, for $c_0\in\R$ and $0<\sigma<\rho<\infty$, there holds
	\begin{align}
		&\frac{\mu_V(B_\sigma)}{\sigma^2} + \int_{A_{\rho}\setminus A_{\sigma}} \left(\frac{1}{4}(\langle H(x),\star\xi\rangle - c_0) + \frac{\langle x-x_0, \star\xi\rangle}{|x-x_0|^2}\right)^2\diff V(x,\xi)\\
		&\quad = \frac{1}{16}\int_{A_{\rho}\setminus A_{\sigma}}|H(x) - c_0(\star\xi)|^2\diff V(x,\xi) - \frac{c_0}{2}\int_{A_{\rho}\setminus A_{\sigma}}\frac{\langle x-x_0,\star\xi\rangle}{|x-x_0|^2}\diff V(x,\xi) + \frac{\mu_V(B_\rho)}{\rho^2}\\
		&\quad\quad -		
		\frac{1}{2\sigma^2}\int_{A_\sigma}\langle x-x_0,H(x)-c_0(\star\xi)\rangle \diff V(x,\xi) - \frac{c_0}{2\sigma^{2}}\int_{A_\sigma}\langle x-x_0, \star\xi\rangle\diff V(x,\xi) \\
		&\quad\quad + \frac{1}{2\rho^2}\int_{A_\rho}\langle x-x_0,H(x)-c_0(\star\xi)\rangle \diff V(x,\xi) + \frac{c_0}{2\rho^{2}}\int_{A_\rho}\langle x-x_0, \star\xi\rangle\diff V(x,\xi) \\
		&\quad\quad+\frac{1}{2\sigma^2}\int_{B_\sigma}\langle x-x_0,\eta(x)\rangle\diff\beta_V(x) + \frac{1}{2}\int_{B_\rho\setminus B_\sigma}\frac{\langle x-x_0,\eta(x)\rangle}{|x-x_0|^2}\diff\beta_V(x) \\
		&\quad \quad- \frac{1}{2\rho^2}\int_{B_\rho}\langle x-x_0,\eta(x)\rangle\diff\beta_V(x).
		 \label{eq:monoton function}
	\end{align}
\end{lem}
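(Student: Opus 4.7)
The plan is to apply the first variation identity \eqref{eq:generalised_mean_curvature_bdr} to a carefully chosen radial test vector field and then complete a square in order to recognize the Helfrich integrand on the left-hand side. Specifically, I would take $X(x) = h(|x-x_0|)(x-x_0)$ with $h$ a Lipschitz function built from the two radii, for example
\begin{equation*}
h_{\sigma,\rho}(s) \defeq \max\{\sigma,\min\{s,\rho\}\}^{-2},
\end{equation*}
which equals $\sigma^{-2}$ on $[0,\sigma]$, equals $s^{-2}$ on $[\sigma,\rho]$ and equals $\rho^{-2}$ on $[\rho,\infty)$. Since this function is not compactly supported I would localize by multiplying with a smooth cutoff $\chi_R$ (equal to $1$ on $B_R(x_0)$ and vanishing outside $B_{2R}(x_0)$) and pass to the limit $R\to\infty$ after all terms are written down. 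The limit is harmless because $\mu_V,\beta_V$ are Radon and $H \in L^2_{\mathrm{loc}}(\mu_V)$, while the local integrability of $|x-x_0|^{-1}$ against $\mu_V$ near $x_0$ needed to make sense of the $s=\sigma$ contribution is guaranteed by \Cref{lem:subcrit_integral} together with \Cref{rem:subcrit_integral}\eqref{item:subcrit_loc}.

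Next I would compute
\begin{equation*}
\divergence_{T(\xi)} X(x) = 2h(|x-x_0|) + h'(|x-x_0|)|x-x_0| - h'(|x-x_0|)\frac{\langle x-x_0,\star\xi\rangle^{2}}{|x-x_0|},
\end{equation*}
using the orthogonal decomposition $|x-x_0|^{2} = |P_{T(\xi)}(x-x_0)|^{2} + \langle x-x_0,\star\xi\rangle^{2}$ and then evaluate on each of the three pieces of $h_{\sigma,\rho}$. On $A_\sigma$ the quantity $2h + h'|x-x_0|$ integrates to $2\sigma^{-2}\mu_V(B_\sigma)$; on $A_\rho\setminus A_\sigma$, the identity $2s^{-2} + (-2s^{-3})s = 0$ eliminates this part and leaves only $2\langle x-x_0,\star\xi\rangle^{2}/|x-x_0|^{4}$; on the complement of $A_\rho$, after taking $R\to\infty$ the analogous contribution yields $-2\rho^{-2}\mu_V(B_\rho)$ (with the opposite sign because $h$ is nonincreasing at $s=\rho$). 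Feeding this into \eqref{eq:generalised_mean_curvature_bdr} produces a first identity of the form
\begin{align*}
\frac{\mu_V(B_\sigma)}{\sigma^{2}} - \frac{\mu_V(B_\rho)}{\rho^{2}} + \int_{A_\rho\setminus A_\sigma}\frac{\langle x-x_0,\star\xi\rangle^{2}}{|x-x_0|^{4}}\diff V
&= -\tfrac{1}{2}\int h_{\sigma,\rho}\langle x-x_0,H\rangle\diff\mu_V \\
&\quad + \tfrac{1}{2}\int h_{\sigma,\rho}\langle x-x_0,\eta\rangle\diff\beta_V.
\end{align*}

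To convert this into the stated monotonicity formula I would use the perpendicularity \eqref{eq:perpendicular_bdr}: for $V$-almost every $(x,\xi)$ one has $H(x) = \langle H(x),\star\xi\rangle\star\xi$ and hence $\langle x-x_0,H\rangle = \langle H,\star\xi\rangle\langle x-x_0,\star\xi\rangle$. This turns the right-hand side curvature term into a product that matches the cross-term of the algebraic identity
\begin{equation*}
\left(\tfrac{1}{4}(\langle H,\star\xi\rangle - c_0) + \frac{\langle x-x_0,\star\xi\rangle}{|x-x_0|^{2}}\right)^{\!2} = \tfrac{1}{16}(\langle H,\star\xi\rangle - c_0)^{2} + \frac{\langle x-x_0,\star\xi\rangle^{2}}{|x-x_0|^{4}} + \tfrac{1}{2}\frac{(\langle H,\star\xi\rangle - c_0)\langle x-x_0,\star\xi\rangle}{|x-x_0|^{2}},
\end{equation*}
valid on the annulus $A_\rho\setminus A_\sigma$ where $h_{\sigma,\rho}(|x-x_0|) = |x-x_0|^{-2}$. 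Regrouping, the $\langle x-x_0,\star\xi\rangle^{2}/|x-x_0|^{4}$ term on the left and half of the cross term combine with the square on the right to form the perfect square appearing in \eqref{eq:monoton function}, while the surplus $-\tfrac{1}{16}(\langle H,\star\xi\rangle - c_0)^{2}$ supplies the Helfrich integrand. The $c_0$-contributions coming from the substitution $H = \langle H,\star\xi\rangle\star\xi$ produce the additional terms $-\tfrac{c_0}{2}\int_{A_\rho\setminus A_\sigma}\langle x-x_0,\star\xi\rangle |x-x_0|^{-2}\diff V$, and the boundary contributions on $A_\sigma$ and $A_\rho$ (where $h_{\sigma,\rho}$ is constant equal to $\sigma^{-2}$, $\rho^{-2}$ respectively) yield the four remaining curvature integrals and the three $\beta_V$-integrals on the right-hand side of \eqref{eq:monoton function}.

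The main obstacle is purely technical: keeping track of the discontinuities of $h'_{\sigma,\rho}$ at $s=\sigma$ and $s=\rho$ and of the cutoff $\chi_R$ at infinity without overcounting, which is why I would actually approximate $h_{\sigma,\rho}$ by smooth functions, apply \eqref{eq:generalised_mean_curvature_bdr} to the honest compactly supported smooth $X$, and only afterwards pass to the limit using dominated convergence (justified by $H\in L^{2}_{\mathrm{loc}}(\mu_V)$, the finite local mass of $\beta_V$ and the subcritical estimate of \Cref{lem:subcrit_integral}). Once this is in hand, the rest is bookkeeping via the completion-of-squares identity above.
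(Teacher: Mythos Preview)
Your approach is essentially the same as the paper's: test the first variation against a radial field, compute the tangential divergence on the three shells, and complete the square using the perpendicularity \eqref{eq:perpendicular_bdr}. The key identity you display and the final regrouping are correct and match the paper.

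There is, however, one technical slip. Your cutoff profile $h_{\sigma,\rho}(s)=\max\{\sigma,\min\{s,\rho\}\}^{-2}$ equals $\rho^{-2}$ on $[\rho,\infty)$, so $h_{\sigma,\rho}(|x-x_0|)(x-x_0)$ grows linearly at infinity. Your claim that multiplying by $\chi_R$ and sending $R\to\infty$ is ``harmless'' is not justified: on $\R^3\setminus B_\rho$ the tangential divergence of your field is the constant $2\rho^{-2}$, and the integrals $\int\chi_R h_{\sigma,\rho}\langle x-x_0,H\rangle\diff\mu_V$ and $\int\chi_R h_{\sigma,\rho}\langle x-x_0,\eta\rangle\diff\beta_V$ need not converge as $R\to\infty$ under the hypotheses of the lemma (no finiteness of $\mu_V(\R^3)$ or of $\int|x||H|\diff\mu_V$ is assumed). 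In particular, the line ``on the complement of $A_\rho$, after taking $R\to\infty$ the analogous contribution yields $-2\rho^{-2}\mu_V(B_\rho)$'' is not correct as stated. The fix is immediate and is exactly what the paper does: replace $h_{\sigma,\rho}$ by
\[
\varphi(s)\defeq\bigl(\max\{s,\sigma\}^{-2}-\rho^{-2}\bigr)_+ = h_{\sigma,\rho}(s)-\rho^{-2}\quad\text{on }[0,\rho],\qquad \varphi\equiv0\text{ on }[\rho,\infty),
\]
which already has compact support in $[0,\rho]$. Then no cutoff at infinity is needed, and the $-\rho^{-2}$ subtraction produces the term $-\mu_V(B_\rho)/\rho^2$ directly from the divergence on $B_\rho$ rather than from any limit at infinity. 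A related minor point: your appeal to \Cref{rem:subcrit_integral}\eqref{item:subcrit_loc} to justify integrability near $x_0$ is stated under \Cref{hyp:generalised_mean_curvature}, whereas the present lemma assumes only \Cref{hyp:generalised_mean_curvature_bdr}; once $\varphi$ is used this justification is not needed, since all integrals are manifestly over $B_\rho$ and the integrands are bounded there.
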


\begin{proof}[Proof of \Cref{lem:monotonicity}]
	Following the computations in \cite[p.~284]{SimonWillmore}, we consider the smooth vector field $X(x)\defeq x-x_0$ for $x\in \R^3$ and the Lipschitz function 
	\begin{equation}\label{eq:varphi}
		\varphi\colon \R\to\R,\qquad \varphi(t)\defeq (\max\{t,\sigma\}^{-2} - \rho^{-2})_+.
	\end{equation}
	Choose a sequence $\varphi_k$ in $C^\infty_c(-\infty,\rho+1)$ such that $\sup_{k\in\N}\|\varphi_k\|_{C^1(\R)}<\infty$, 
	\begin{align}
		&\varphi_k \to \varphi && \text{locally uniformly as $k\to\infty$},\\
		&\varphi_k'(t) \to \varphi'(t) &&\text{as $k\to\infty$ for all $t\in\R\setminus\{\sigma,\rho\}$}
	\end{align}%
	and such that for all $k\in\N$, there holds $\varphi'_k(\sigma) = 0$ and $\varphi'_k(\rho) =- 2\rho^{-3}$. Abbreviating $\Phi_k\defeq \varphi_k\circ |X|$ it follows
	\begin{equation}
		\lim_{k\to\infty}\divergence_{T(\xi)}(\Phi_kX)(x) =
		\begin{cases}
			2(\frac{1}{\sigma^2} - \frac{1}{\rho^2}) & \text{for $(x,\xi)\in A_\sigma$}\\
			\frac{2\langle X(x),\star\xi\rangle^2}{|X(x)|^4} - \frac{2}{\rho^2} & \text{for $(x,\xi)\in A_\rho\setminus A_\sigma$}\\
			0 & \text{for $(x,\xi)\in\mathbb G_2^\mathrm{o}(\R^3)\setminus A_\rho$}.
		\end{cases}
	\end{equation} 
	Denoting $|X|_\sigma \defeq \max\{|X|,\sigma\}$, testing the first variation identity (see \eqref{eq:generalised_mean_curvature_bdr}, and~\eqref{eq:first_variation}) with the vector fields $\Phi_kX$ and passing to the limit as $k\to\infty$, we obtain
	\begin{align}
		&\frac{2\mu_V(B_\sigma)}{\sigma^2} + \int_{A_{\rho}\setminus A_\sigma}\frac{2\langle X(x),\star\xi\rangle^2}{|X(x)|^4}\diff V(x,\xi) \\
		&\quad = \frac{2\mu_V(B_\rho)}{\rho^2} - \int_{B_\rho} (|X|_\sigma^{-2} - \rho^{-2})\langle X, H\rangle\diff \mu_V + \int_{B_\rho} (|X|_\sigma^{-2} - \rho^{-2})\langle X, \eta\rangle\diff \beta_V.
	\end{align} 
	By~\eqref{eq:perpendicular_bdr} and since $|\star\xi|=1$ for $\xi\in\mathbb G^\mathrm{o}(3,2)$, we have the pointwise identity
	\begin{equation}
		\left|\frac{1}{4}(H - c_0(\star\xi)) + \frac{\langle X,\star\xi\rangle(\star\xi)}{|X|^2}\right|^2 = \frac{1}{16}|H-c_0(\star\xi)|^2 + \frac{\langle H - c_0(\star\xi),X\rangle}{2|X|^2} + \frac{\langle X,\star\xi\rangle^2}{|X|^4}
	\end{equation}
	and consequently
	\begin{align}
		& \frac{\mu_V(B_\sigma)}{\sigma^2} + \int_{A_{\rho}\setminus A_\sigma} \left(\frac{1}{4}(\langle H(x),\star\xi\rangle - c_0) + \frac{\langle X(x), \star\xi\rangle}{|X(x)|^2}\right)^2\diff V(x,\xi) \\
		&\quad = \frac{1}{16}\int_{A_{\rho}\setminus A_\sigma}|H(x)-c_0(\star\xi)|^2\diff V(x,\xi) + \frac{\mu_V(B_\rho)}{\rho^2} \\
		&\quad \quad + \frac{1}{2}\int_{A_{\rho}\setminus A_\sigma} \frac{\langle H(x) - c_0(\star\xi),X(x)\rangle}{|X(x)|^2}\diff V(x,\xi) - \frac{1}{2}\int_{B_\rho} (|X|_\sigma^{-2} - \rho^{-2})\langle X, H\rangle\diff \mu_V  \\ \label{eq:monotonicity_ln1}
		&\quad \quad + \frac{1}{2} \int_{B_\rho} (|X|_\sigma^{-2} - \rho^{-2})\langle X, \eta\rangle\diff \beta_V.
	\end{align}
	Moreover, we have
	\begin{align}
		& \frac{1}{2}\int_{A_{\rho}\setminus A_\sigma} \frac{\langle H(x) - c_0(\star\xi),X(x)\rangle}{|X(x)|^2}\diff V(x,\xi) - \frac{1}{2}\int_{B_\rho} (|X|_\sigma^{-2} - \rho^{-2})\langle X, H\rangle\diff \mu_V \\
		&\quad =  - \frac{c_0}{2}\int_{A_{\rho}\setminus A_\sigma} \frac{\langle X(x), \star\xi\rangle}{|X(x)|^2}\diff V(x,\xi) - \frac{1}{2\sigma^2}\int_{B_\sigma} \langle X, H\rangle\diff \mu_V +  \frac{1}{2\rho^2}\int_{B_\rho} \langle X, H\rangle\diff \mu_V \\
		&\quad =  - \frac{c_0}{2}\int_{A_{\rho}\setminus A_\sigma} \frac{\langle X(x), \star\xi \rangle}{|X(x)|^2}\diff V(x,\xi) \\
		&\quad \quad - \frac{1}{2\sigma^2}\int_{A_\sigma} \langle X(x), H(x) - c_0(\star\xi)\rangle\diff V(x,\xi) - \frac{c_0}{2\sigma^2}\int_{A_\sigma} \langle X(x), \star\xi\rangle\diff V(x,\xi)\\ \label{eq:monotonicity_ln2}
		&\quad \quad + \frac{1}{2\rho^2}\int_{A_\rho} \langle X(x), H(x) - c_0(\star \xi)\rangle\diff V(x,\xi)+ \frac{c_0}{2\rho^2}\int_{A_\rho} \langle X(x), \star \xi \rangle\diff V(x,\xi)
	\end{align}
	as well as 
	\begin{align}
		&\frac{1}{2} \int_{B_\rho} (|X|_\sigma^{-2} - \rho^{-2})\langle X, \eta\rangle\diff \beta_V \\ \label{eq:monotonicity_ln3}
		&\quad = \frac{1}{2\sigma^2}\int_{B_\sigma}\langle X,\eta\rangle\diff\beta_V + \frac{1}{2}\int_{B_\rho\setminus B_\sigma}\frac{\langle X,\eta\rangle}{|X|^2}\diff\beta_V - \frac{1}{2\rho^2}\int_{B_\rho}\langle X,\eta\rangle\diff\beta_V.
	\end{align}
	Now, using $X(x) = x-x_0$ and putting \eqref{eq:monotonicity_ln2}, \eqref{eq:monotonicity_ln3} into \eqref{eq:monotonicity_ln1}, the conclusion follows.
\end{proof}

\subsection{The general varifold case}

We now use the monotonicity formula \eqref{eq:monoton function} to prove our most general Li--Yau inequality.

\begin{thm}\label{thm:LY varifold}
	Suppose $V\in\mathbb{V}_2^{\mathrm{o}}(\R^3)$,  $\eta\in L^\infty(\beta_V;\S^2)$ and $H\in L^1_\mathrm{loc}(\mu_V;\R^3)$ satisfy \Cref{hyp:generalised_mean_curvature_bdr}. Let $c_0\in \R$ and suppose that
	\begin{align}\label{eq:Helfrich finite}
		\CalH_{c_0}(V)<\infty
	\end{align}
	and
	\begin{align}\label{eq:dens at infty finite}
		\theta^{*2}(\mu_V,\infty)\defeq\limsup_{\rho\to\infty}\frac{\mu_V(B_\rho(0))}{\pi\rho^2}<\infty.
	\end{align}
	Then, for all $x_0\in \R^3\setminus \spt \beta_V$ we have
	\begin{align}
			\theta^2(\mu_V, x_0)&\leq \theta^{*2}(\mu_V,\infty) + \frac{1}{4\pi}\CalH_{c_0}(V) \\
			&\quad  + \limsup_{\rho\to\infty}\frac{c_0}{2\pi}\left(\int_{B_\rho(x_0)\times \mathbb{G}^{\mathrm{o}}_2(\R^3)}\left(\rho^{-2}-\abs{x-x_0}^{-2}\right) \langle x-x_0, \star\xi\rangle \diff V(x,\xi)\right)\\
			&\quad +  \limsup_{\rho\to\infty}\frac{1}{2\pi}\left(\int_{B_\rho(x_0)}(\abs{x-x_0}^{-2}-\rho^{-2})\langle x-x_0, \eta(x)\rangle\diff \beta_V(x)\right).\label{eq:LY Helfrich Varifold}
	\end{align}
\end{thm}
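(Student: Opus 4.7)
The plan is to apply the monotonicity formula of Lemma~\ref{lem:monotonicity} at $x_0$, drop the nonnegative squared term on the left-hand side of~\eqref{eq:monoton function}, and pass to the limit $\sigma\to 0+$ followed by $\limsup$ as $\rho\to\infty$. The $\sigma$-limit should turn $\mu_V(B_\sigma)/\sigma^2$ into $\pi\theta^2(\mu_V,x_0)$, while the $\rho$-$\limsup$ should identify the surviving $\rho$-integrals with the right-hand side of~\eqref{eq:LY Helfrich Varifold} after dividing by $\pi$.

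For the $\sigma$-limit, I first pick $\sigma_0>0$ with $\beta_V(B_{\sigma_0}(x_0))=0$, which is possible since $x_0\notin\spt\beta_V$, so that the $\sigma$-boundary integrals vanish whenever $\sigma\leq\sigma_0$. Finite Helfrich energy combined with Remark~\ref{rem:H L2 loc} gives $H\in L^2_\mathrm{loc}(\mu_V;\R^3)$; since $\delta V$ has no singular part near $x_0$, Remark~\ref{rem:subcrit_integral}(i) yields $\mu_V(B_\sigma(x_0))=O(\sigma^2)$ and the existence of $\theta^2(\mu_V,x_0)$. Cauchy--Schwarz then shows that the error term $\tfrac{1}{2\sigma^2}\bigl|\int_{A_\sigma}\langle x-x_0,H-c_0(\star\xi)\rangle\,dV\bigr|\leq\tfrac12\sqrt{\mu_V(B_\sigma)/\sigma^2}\bigl(\int_{A_\sigma}|H-c_0(\star\xi)|^2\,dV\bigr)^{1/2}\to 0$, while $\tfrac{|c_0|}{2\sigma^2}\bigl|\int_{A_\sigma}\langle x-x_0,\star\xi\rangle\,dV\bigr|\leq\tfrac{|c_0|}{2}\mu_V(B_\sigma)/\sigma\to 0$. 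Moreover, the integrals over $A_\rho\setminus A_\sigma$ of $|x-x_0|^{-2}\langle x-x_0,\star\xi\rangle$ converge to the corresponding integrals over $A_\rho$ by dominated convergence, since the integrand is pointwise bounded by $|x-x_0|^{-1}$, which is $\mu_V$-integrable near $x_0$ by Lemma~\ref{lem:subcrit_integral}. Once the $\sigma$-limit is taken, the two $c_0$-contributions combine into $\tfrac{c_0}{2}\int_{A_\rho}(\rho^{-2}-|x-x_0|^{-2})\langle x-x_0,\star\xi\rangle\,dV$, and the two $\beta_V$-contributions into $\tfrac{1}{2}\int_{B_\rho}(|x-x_0|^{-2}-\rho^{-2})\langle x-x_0,\eta\rangle\,d\beta_V$, which match the $\limsup$ expressions in~\eqref{eq:LY Helfrich Varifold} modulo the factor $\pi^{-1}$. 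Monotone convergence yields $\tfrac{1}{16}\int_{A_\rho}|H-c_0(\star\xi)|^2\,dV\to\tfrac14\CalH_{c_0}(V)$, and hypothesis~\eqref{eq:dens at infty finite} gives $\limsup_{\rho\to\infty}\mu_V(B_\rho)/\rho^2=\pi\theta^{*2}(\mu_V,\infty)$.

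The hard part will be killing the remaining far-field error term $\tfrac{1}{2\rho^2}\int_{A_\rho}\langle x-x_0,H-c_0(\star\xi)\rangle\,dV$, which is bounded by $\tfrac{1}{2\rho}\int_{A_\rho}|H-c_0(\star\xi)|\,dV$. A direct Cauchy--Schwarz only yields $\sqrt{\mu_V(B_\rho)/\rho^2}\sqrt{\CalH_{c_0}(V)}$, which need not vanish under~\eqref{eq:dens at infty finite}. My plan is the standard truncation trick: given $\epsilon>0$, choose $R$ so large that $\int_{\mathbb{G}^{\mathrm{o}}_2(\R^3)\setminus A_R}|H-c_0(\star\xi)|^2\,dV<\epsilon$, which is possible since $\CalH_{c_0}(V)<\infty$. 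Splitting $A_\rho=A_R\cup(A_\rho\setminus A_R)$, the piece over $A_R$ divided by $\rho$ tends to zero as $\rho\to\infty$ with $R$ fixed, while Cauchy--Schwarz on $A_\rho\setminus A_R$ gives an upper bound of $\sqrt{\mu_V(B_\rho)/\rho^2}\sqrt{\epsilon}$. Taking $\limsup_{\rho\to\infty}$ and then $\epsilon\to 0+$ then kills the error term, and dividing the resulting inequality by $\pi$ finally yields~\eqref{eq:LY Helfrich Varifold}.
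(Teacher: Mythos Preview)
Your proof is correct and follows essentially the same route as the paper's: apply Lemma~\ref{lem:monotonicity}, send $\sigma\to0+$ using the local quadratic density bound near $x_0$ (which holds since $x_0\notin\spt\beta_V$) together with Cauchy--Schwarz and Lemma~\ref{lem:subcrit_integral}, and then take $\limsup_{\rho\to\infty}$ with the same truncation trick for the far-field error term. The only cosmetic difference is that the paper packages the identity from Lemma~\ref{lem:monotonicity} as a nondecreasing function $\gamma(\rho)$ before passing to the limits, whereas you drop the nonnegative square immediately and work directly with the resulting inequality.
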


\begin{remark}\label{rem:LY varifold}
	\begin{enumerate}[(i)]
		\item We do not assume $\mu_V(\R^3)<\infty$ in \Cref{thm:LY varifold}. Indeed, let $r=1/c_0$ for $c_0>0$, let $f\colon \R\times \mathbb{S}^1\to\R^3$, $f(t, \varphi)=(r\cos\varphi, r\sin\varphi,t)$ be the cylinder with radius $r$, let $V$ be the associated varifold, cf.\ \Cref{ex:varifold_of_immersion}, and let $x_0=(r,0,0)\in \spt\mu_V$. 
		It is not difficult to see that $\beta=0$, $\CalH_{c_0}(V)=0$ and $\mu_V(B_\rho(x_0))=O(\rho)$ as $\rho\to\infty$, so that $\theta^{*2}(\mu_V,\infty)=0$ whereas $\mu_V(\R^3)=\infty$. 
		Moreover,
		the third term on the right hand side of  \eqref{eq:LY Helfrich Varifold} is
		\begin{align}
			-\frac{c_0}{2\pi} \int_{\R\times \S^1}\frac{\langle f-x_0, n\rangle}{\abs{f-x_0}^2}\diff \mu &= \frac{c_0}{2\pi} \int_0^{2\pi}\int_{\R} \frac{a(\varphi)}{2a(\varphi) +t^2}\diff t\diff \varphi,
		\end{align}
		where $a(\varphi)\defeq r^2(1-\cos\varphi)\geq 0$. Hence, the inner integral can be evaluated using the $\arctan$, yielding
		\begin{align}
			-\frac{c_0}{2\pi} \int_{\R\times \S^1}\frac{\langle f-x_0, n\rangle}{\abs{f-x_0}^2}\diff \mu &= \frac{c_0}{2\pi} \int_0^{2\pi} \pi\sqrt{\frac{a(\varphi)}{2}} \diff\varphi = \frac{c_0}{2\pi}\cdot 4\pi r.
		\end{align}
		In the last step, we used  $1-\cos\varphi =  2\sin^2(\frac{\varphi}{2})$ and the symmetry of the sine function.
%
		\item We can reverse the orientation of the varifold $V$ by considering $\hat{V}$, the push forwad under the map $(x, \xi)\mapsto (x,-\xi)$, which is continuous and proper so $\hat{V}\in \mathbb{V}^{\mathrm{o}}_2(\R^3)$ by \cite[2.2.17]{Federer}. In view of \eqref{eq:Helfrich orient transform} it is not suprising that
		\begin{align}
			\CalH_{c_0}(V) = \CalH_{-c_0}(\hat{V}).
		\end{align}
		Similarly, the other term in \eqref{eq:LY Helfrich Varifold} involving $c_0$ remains unchanged if we replace $V$ by $\hat{V}$ and $c_0$ by $-c_0$. The singular part does not change under reversing the orientation.
		\item Equality  holds for $c_0=0$ if $V$ corresponds to the unit sphere and $x_0$ is any point on the unit sphere. Equality also holds for $c_0=0$ if $V$ corresponds to the unit disk and $x_0$ is the center, 
		and if $V$ corresponds to a plane and $x_0$ is any point on the plane.
		\item If the singular part $\beta_V$ is regular enough, for instance if $\spt\beta_V$ is given by a smooth embedding $\gamma\colon\S^1\to\R^3$ and $\eta\circ\gamma$ is a unit normal field along $\gamma$, then the statement remains valid even for $x_0\in\spt\beta_V$. Indeed, for $x$ close to $x_0$, the vectors $x-x_0$ and $\eta(x)$ are nearly orthogonal. Thus, since $\theta^1(\beta_V,x_0)=1$, a short argument using the Taylor expansion of $\gamma$ implies
		\begin{align}
			x\mapsto \abs{x-x_0}^{-2}\langle x-x_0, \eta(x)\rangle\in L^1_{\mathrm{loc}}(\beta_V).
		\end{align}	
	\end{enumerate}
\end{remark}

\begin{proof}[Proof of \Cref{thm:LY varifold}]
	For $\rho>0$ let $B_\rho$ and $A_\rho$ be as in \Cref{lem:monotonicity}. 
	By \Cref{rem:subcrit_integral}\eqref{item:subcrit_loc},
	there exist $D<\infty$ and $\rho_0>0$ such that
	\begin{align}
		\mu(B_\rho)\leq D\rho^2 \quad \text{for all }0<\rho<\rho_0.\label{eq:density quotient bound}
	\end{align}
	Consequently, \Cref{lem:subcrit_integral} yields
	\begin{align}\label{eq:subcrit integral}
		\int_{B_{\rho}}\frac{1}{\abs{x-x_0}}\diff\mu_V(x)\leq C \rho \quad \text{for all }0<\rho< \rho_0,
	\end{align}
	and thus $x\mapsto \abs{x-x_0}^{-1}\in L^1_{\mathrm{loc}}(\mu_V)$. 
	Moreover we have $\mathrm{dist}(x_0,\spt\beta_V)>0$, and consequently
	\begin{align}
		x\mapsto \abs{x-x_0}^{-2}\langle x-x_0, \eta(x)\rangle\in L^1_{\mathrm{loc}}(\beta_V).\label{eq:subcrit integral boundary}
	\end{align}	

	Using \eqref{eq:Helfrich finite}, \eqref{eq:subcrit integral} and \eqref{eq:subcrit integral boundary}, we find that the function $\gamma\colon (0,\infty)\to\R$ with
	\begin{align}
		\gamma(\rho)&\defeq \frac{\mu_V(B_\rho)}{\rho^2} + \frac{1}{16} \int_{A_\rho}\abs{H(x)-c_0(\star\xi)}^2\diff V(x,\xi) - \frac{c_0}{2}\int_{A_\rho}\frac{\langle x-x_0, \star\xi\rangle}{\abs{x-x_0}^2}\diff V(x,\xi)\\
		&\quad  + \frac{1}{2\rho^2}\int_{A_\rho}\langle x-x_0, H(x)-c_0(\star\xi)\rangle\diff V(x,\xi) + \frac{c_0}{2\rho^2}\int_{A_\rho}\langle x-x_0, \star\xi\rangle\diff V(x,\xi)\\
		&\quad + \frac{1}{2} \int_{B_\rho}(\abs{x-x_0}^{-2}-\rho^{-2})\langle x-x_0, \eta(x)\rangle\diff\beta_V(x)
		\label{eq:def gamma}
	\end{align}
	is well defined and, by \Cref{lem:monotonicity}, it is monotonically nondecreasing. 
	
	We now examine the limit $\lim_{\sigma\to 0+}\gamma(\sigma)$.
	By \eqref{eq:Helfrich finite}, the second term in $\gamma(\sigma)$ goes to zero as $\sigma\to 0+$ and so does the third term by \eqref{eq:subcrit integral}. For the fourth term, we use the Cauchy--Schwarz inequality to estimate
	\begin{align}
		&\Abs{\sigma^{-2}\int_{A_\sigma}\langle x-x_0, H(x)-c_0(\star\xi)\rangle\diff V(x,\xi)} \\ \label{eq:gamma 4 to 0}
		&\qquad \leq \left(\sigma^{-2}\mu_V(B_\sigma)\right)^{\frac{1}{2}}\left(\int_{A_\sigma}\abs{H(x)-c_0(\star\xi)}^2\diff V(x,\xi)\right)^{\frac{1}{2}}, 
	\end{align}
	where the right hand side goes to zero by
	\eqref{eq:Helfrich finite}, \eqref{eq:density quotient bound} and since $\mu_V(\{x_0\})=0$ by \Cref{rem:subcrit_integral}\eqref{item:subcrit_loc}. The fifth term in $\gamma(\sigma)$ also goes to zero as $\sigma\to 0+$, since
	\begin{align}\label{eq:gamma 5 to 0}
		\sigma^{-2}\Abs{\int_{A_\sigma} \langle x-x_0, \star\xi\rangle\diff V(x,\xi)}\leq  \sigma^{-1}\mu_V(B_\sigma) \leq D\sigma,
	\end{align}
	using \eqref{eq:density quotient bound}. Since $x_0\not\in \spt\beta_V$, we have $\beta_V(B_\sigma)=0$ for $\sigma>0$ sufficiently small. Consequently, using $\omega_2=\pi$, we find
	$\lim_{\sigma\to 0+}\gamma(\sigma)= \pi \theta^{2}(\mu_V, x_0)$.
	
	Now, we discuss the limit $\lim_{\rho\to\infty}\gamma(\rho)$. It is not too difficult to see that
	\begin{align}\label{eq:density comparison}
		\limsup_{\rho\to\infty}\frac{\mu_V(B_\rho)}{\pi\rho^2} = \limsup_{\rho\to\infty}\frac{\mu_V(B_\rho(0))}{\pi\rho^2} = \theta^{*2}(\mu_V, \infty).
	\end{align}
	For the fourth term in \eqref{eq:def gamma}, for any $0<\sigma<\rho$, we estimate by Cauchy--Schwarz 
	\begin{align}
		&\Abs{\rho^{-2}\int_{A_\rho}\langle x-x_0, H(x)-c_0(\star\xi)\rangle\diff V(x,\xi)}\\
		&\qquad \leq \left(\rho^{-2}\mu_V(B_\rho)\right)^{\frac{1}{2}}\left(\int_{\mathbb{G}_2^{\mathrm{o}}(\R^3)\setminus A_\sigma} \abs{H(x)-c_0(\star\xi)}^2\diff V(x,\xi)\right)^{\frac{1}{2}} \\
		&\qquad \quad + \rho^{-1}\int_{A_\sigma}\abs{H(x)-c_0(\star\xi)}\diff V(x,\xi).
	\end{align}
	Sending first $\rho\to\infty$ and then $\sigma\to\infty$, this goes to zero by \eqref{eq:Helfrich finite}, \eqref{eq:dens at infty finite} and \eqref{eq:density comparison}. 
	The claim then follows from the monotonicity of $\gamma$.
\end{proof}



If the singular part $\beta_V$ vanishes and $\CalV_c(V,x_0)$ exists, using \Cref{lem:subcrit_integral} we obtain the following
\begin{cor}\label{cor:LY varif area dec}
	Suppose $V\in \mathbb{V}^{\mathrm{o}}_2(\R^3)$ and $H\in L^2_\mathrm{loc}(\mu_V;\R^3)$ satisfy \Cref{hyp:generalised_mean_curvature}. Let $c_0\in \R$, $x_0\in \R^3$ and suppose that $\CalV_c(V,x_0)$ exists.
	Then we have
	\begin{align}
			\theta^{2}(\mu_V, x_0)\leq \theta^{* 2}(\mu_V,\infty)+ \frac{1}{4\pi}\CalH_{c_0}(V) + \frac{c_0}{2\pi} \CalV_c(V,x_0).\label{eq:LY varif area dec}
		\end{align}
\end{cor}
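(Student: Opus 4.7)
The plan is to apply \Cref{thm:LY varifold} in the special case $\beta_V = 0$ and to show that the complicated $c_0$-dependent limsup simplifies to $\frac{c_0}{2\pi}\CalV_c(V,x_0)$ under the additional assumption that the concentrated volume exists.

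First I would dispense with the trivial case: if $\CalH_{c_0}(V) = \infty$ or $\theta^{*2}(\mu_V,\infty) = \infty$, then, since $\CalV_c(V,x_0)$ is by assumption a finite real number, the right-hand side of \eqref{eq:LY varif area dec} is $+\infty$ and there is nothing to prove. Assume henceforth that both quantities are finite. Observing that \Cref{hyp:generalised_mean_curvature} coincides with \Cref{hyp:generalised_mean_curvature_bdr} for $\beta_V = 0$ (and any measurable $\eta$), \Cref{thm:LY varifold} becomes applicable and yields, after dropping the vanishing $\beta_V$-integral,
\begin{equation*}
\theta^2(\mu_V, x_0) \leq \theta^{*2}(\mu_V,\infty) + \frac{1}{4\pi}\CalH_{c_0}(V) + \frac{c_0}{2\pi}\limsup_{\rho\to\infty} I(\rho),
\end{equation*}
where
\begin{equation*}
I(\rho) \defeq \int_{B_\rho(x_0)\times\mathbb{G}^\mathrm{o}(3,2)}\bigl(\rho^{-2} - |x-x_0|^{-2}\bigr)\langle x-x_0,\star\xi\rangle \diff V(x,\xi).
\end{equation*}
It thus remains to show that $\lim_{\rho\to\infty} I(\rho) = \CalV_c(V,x_0)$.

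I would split $I(\rho) = I_1(\rho) - I_2(\rho)$ with
\begin{equation*}
I_1(\rho) \defeq \rho^{-2}\int_{B_\rho(x_0)\times\mathbb{G}^\mathrm{o}(3,2)}\langle x-x_0,\star\xi\rangle \diff V, \qquad I_2(\rho) \defeq \int_{B_\rho(x_0)\times\mathbb{G}^\mathrm{o}(3,2)}\frac{\langle x-x_0,\star\xi\rangle}{|x-x_0|^2}\diff V.
\end{equation*}
Existence of $\CalV_c(V,x_0)$ by definition means that the integrand of $I_2$ is absolutely $V$-integrable (locally near $x_0$ this is in any case guaranteed by \Cref{lem:subcrit_integral} combined with \Cref{rem:subcrit_integral}\eqref{item:subcrit_loc}, since $|\langle x-x_0,\star\xi\rangle|/|x-x_0|^2 \leq 1/|x-x_0|$). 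Therefore dominated convergence yields $I_2(\rho) \to -\CalV_c(V,x_0)$ as $\rho\to\infty$.

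The main obstacle is the convergence $I_1(\rho) \to 0$, since the naive estimate $|I_1(\rho)| \leq \rho^{-1}\mu_V(B_\rho(x_0))$ is merely $O(\rho)$ under \eqref{eq:dens at infty finite} and carries no decay. To circumvent this I would rewrite $\langle x-x_0,\star\xi\rangle = |x-x_0|^2 \cdot \frac{\langle x-x_0,\star\xi\rangle}{|x-x_0|^2}$ and, for fixed $R > 0$, split $B_\rho(x_0)$ into $B_R(x_0)$ (where $|x-x_0|^2/\rho^2 \leq R^2/\rho^2$) and $B_\rho(x_0)\setminus B_R(x_0)$ (where $|x-x_0|^2/\rho^2 \leq 1$), obtaining
\begin{equation*}
|I_1(\rho)| \leq \frac{R^2}{\rho^2}\int_{\mathbb{G}_2^\mathrm{o}(\R^3)}\frac{|\langle x-x_0,\star\xi\rangle|}{|x-x_0|^2}\diff V + \int_{(\R^3\setminus B_R(x_0))\times\mathbb{G}^\mathrm{o}(3,2)}\frac{|\langle x-x_0,\star\xi\rangle|}{|x-x_0|^2}\diff V.
\end{equation*}
Sending $\rho\to\infty$ for fixed $R$ makes the first summand vanish, and afterwards sending $R\to\infty$ makes the second summand vanish by absolute $V$-integrability of the concentrated volume integrand. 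Combining $I_1(\rho)\to 0$ and $I_2(\rho)\to -\CalV_c(V,x_0)$ gives $\lim_{\rho\to\infty} I(\rho) = \CalV_c(V,x_0)$, which when inserted above yields \eqref{eq:LY varif area dec}.
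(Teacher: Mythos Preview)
Your proof is correct and follows essentially the same route as the paper: reduce to \Cref{thm:LY varifold} with $\beta_V=0$, then show that the $c_0$-term converges to $\frac{c_0}{2\pi}\CalV_c(V,x_0)$ by splitting off $I_1(\rho)$ and controlling it via an intermediate radius and the absolute integrability of the concentrated volume integrand. The paper carries out the identical splitting (with $\sigma$ in place of your $R$) and sends first $\rho\to\infty$, then $\sigma\to\infty$; your bound $\frac{R^2}{\rho^2}\int\frac{|\langle x-x_0,\star\xi\rangle|}{|x-x_0|^2}\diff V$ for the inner part is a cosmetic variant of the paper's $\rho^{-2}\int_{A_\sigma}|\langle x-x_0,\star\xi\rangle|\diff V$.
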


\begin{proof}
	Without loss of generality, we may assume $\CalH_{c_0}(V)<\infty$, $\theta^{* 2}(\mu_V,\infty)<\infty$.
	By \Cref{thm:LY varifold}, we only need to discuss the third term on the right hand side of \eqref{eq:LY Helfrich Varifold}. To that end, for $0<\sigma<\rho$ we estimate
	\begin{align}
		&\frac{1}{\rho^2}\int_{A_\rho}\abs{\langle x-x_0, \star\xi\rangle}\diff V(x,\xi) \\
		&\qquad \leq \int_{\mathbb{G}^{\mathrm{o}}_2(\R^3)\setminus A_\sigma} \frac{\abs{\langle x-x_0, \star\xi\rangle}}{\abs{x-x_0}^2}\diff V(x,\xi) + \frac{1}{\rho^2} \int_{A_\sigma}\abs{\langle x-x_0, \star\xi\rangle}\diff V(x,\xi).
	\end{align}
	Sending first $\rho\to\infty$ and then $\sigma\to \infty$ this goes to zero since $\CalV_c(V,x_0)$ exists by assumption. The result follows.
\end{proof}

\subsection{Varifolds with enclosed volume}\label{sec:finiteperimeter}

In this section we introduce a class of oriented varifolds that satisfy a divergence theorem, see \Cref{hyp:EmuH}. These varifolds comprise the surfaces shown in Figures~\ref{fig:touching_spheres}--\ref{fig:pillow} and \ref{fig:intersecting_spheres}. We then show that their concentrated volume is positive, see \Cref{lem:cvol-Alexandrov}. We start with a short review of sets of locally finite perimeter, cf.~\cite[Chapter~5]{EvansGariepy}, \cite[Section~4.5]{Federer}. 

Let $E\subset \R^3$. We define the \emph{measure theoretic boundary of} $E$ by
\begin{equation}
\partial_*E = \{x\in\R^3 \mid \theta^{*3}(\CalL^3\llcorner E,x)>0,\,\theta^{*3}(\CalL^3\llcorner (\R^3\setminus E),x)>0\}.
\end{equation} 
Moreover, we denote with $n_E\colon\R^3\to\R^3$ the \emph{measure theoretic inner unit normal of~$E$} (see the definition \cite[4.5.5]{Federer}). 
In view of \emph{Federer's criterion}~\cite[4.5.11,\,2.10.6]{Federer}, we say that $E$ has \emph{locally finite perimeter}, if and only if $E$ is an $\CalL^3$-measurable set, and $\CalH^2(K\cap\partial_*E)<\infty$ for all compact sets $K\subset\R^3$.

Let $E\subset\R^3$ be a set of locally finite perimeter and $B = \{x\in\R^3\mid \abs{n_E(x)}=1\}$. We collect the following properties (see \cite[4.5.6]{Federer}).  
\begin{itemize}
	\item The sets $B$ and $\partial_*E$ are $\CalH^2$-almost equal.
	\item $\CalH^2\llcorner\partial_*E$ is a Radon measure over $\R^3$ and $n_E$ is $\CalH^2\llcorner\partial_*E$-measurable.
	\item The divergence theorem reads as 
	\begin{equation}\label{eq:div-thm-finite-perimeter}
	-\int_{\partial_*E}\langle X,n_E\rangle\diff\CalH^2 = \int_E\divergence X\diff\CalL^3
	\end{equation} 
	for all Lipschitz maps $X\colon\R^3\to\R^3$ with compact support.
\end{itemize}

In view of Riesz's representation theorem, we define the oriented varifold $V\in\mathbb V_2^\mathrm{o}(\R^3)$ associated with~$\partial_*E$ 
by
\begin{equation}\label{eq:perimeter_varifold}
	V(\varphi) \defeq \int_{\R^3}\varphi(x,\star n_E(x))\,\mathrm d(\mathcal H^2\llcorner\partial_*E)(x) 
\end{equation}
for all real valued continuous functions $\varphi$ on $\mathbb G^\mathrm{o}_2(\R^3)$ with compact support.
%
There holds $\mu_V = \CalH^2\llcorner\partial_*E$ and \eqref{eq:div-thm-finite-perimeter} reads
\begin{equation}\label{eq:Gauss-Green}
	\int_{\mathbb G_2^\mathrm{o}(\R^3)}\langle X(x),\star\xi\rangle\diff V(x,\xi) = - \int_{E}\divergence X\diff \CalL^3
\end{equation}
for all Lipschitz maps $X\colon\R^3\to\R^3$ with compact support.

This divergence theorem is the main motivation for considering a particular class of varifolds in the sequel.

\begin{hyp}\label{hyp:EmuH}
	Suppose $V\in\mathbb V_2^\mathrm{o}(\R^3)$ and $H\in L_\mathrm{loc}^1(\mu_V;\R^3)$ satisfy \Cref{hyp:generalised_mean_curvature}, $E\subset \R^3$ is an $\CalL^3$-measurable set, $\Theta\in L^1_{\mathrm{loc}}(\CalL^3\llcorner E;\N)$, 
	\begin{equation}\label{eq:diam-cond-enclosing_varifolds}
		\diam \spt (\CalL^3\llcorner E) \leq \diam \spt \mu_V,
	\end{equation}
	and
	\begin{equation} \label{eq:generalized_div_thm}
		-\int_{\mathbb G_2^\mathrm{o}(\R^3)}\langle X(x),\star\xi\rangle\diff V(x,\xi) =  \int_{E}(\divergence X) \Theta\diff \CalL^3
	\end{equation}
	for all Lipschitz maps $X\colon\R^3\to\R^3$ with compact support. In this case, we term $V$ \emph{a varifold with enclosed volume}. 
\end{hyp}

\begin{remark}
	\begin{enumerate}[(i)]
		\item If $V\in \mathbb{V}^{\mathrm{o}}_2(\R^3)$ is integral with compact support and such that the associated $2$-current has zero boundary, by \cite[4.5.17]{Federer} we find that \eqref{eq:generalized_div_thm} is satisfied for some measurable $E\subset \R^3$ and $\Theta\in L^1_{\mathrm{loc}}(\CalL^3\llcorner E;\Z)$, see also \cite[Section 3]{EichmannAGAG}. In \Cref{hyp:EmuH} we additionally require $\Theta>0$ a.e.\ on $E$, the diameter bound \eqref{eq:diam-cond-enclosing_varifolds} and that $V$ satisfies \Cref{hyp:generalised_mean_curvature}.
		\item The sign in Equation~\eqref{eq:generalized_div_thm} is adapted to our convention that the unit normal points to the interior.
		\item Since the divergence theorem \eqref{eq:div-thm-finite-perimeter} remains true if we replace $E$ with $\R^3\setminus E$ and $n_E$ with $-n_E$, condition \eqref{eq:diam-cond-enclosing_varifolds} ensures that we pick the correct orientation.
		\item The function $\Theta$ has \emph{locally bounded variation} (see the definition \cite[Section~5.1]{EvansGariepy}) and the coarea formula \cite[4.5.9(13)]{Federer} implies that 
		\begin{equation}
			E_k\defeq \{x\in\R^3\mid \Theta(x) \geq k\} \qquad\text{for $k\in\N$}
		\end{equation}  
		defines a sequence of decreasing sets of locally finite perimeter.
		\item If $\Theta\equiv1$, then the varifold associated with $\partial_*E$ does not necessarily coincide with~$V$, compare Figures~\ref{fig:pillow} and \ref{fig:current}.
		\item If $V$ is associated with the reduced boundary of a set $E$ of locally finite perimeter, then $q_\#V$ is an integral varifold (in the sense of \cite[3.5]{Allard}). Hence, if additionally $V$ has generalized mean curvature $H$ and vanishing singular part $\beta_V = 0$, then there holds $H(x)\wedge \star \xi = 0$ for $V$-almost all $(x,\xi)$ by \cite[Section 5.8]{Brakke}, $V,H$ satisfy \Cref{hyp:generalised_mean_curvature} and thus $V,H,E$ and $\Theta\equiv1$ satisfy \Cref{hyp:EmuH}.
	\end{enumerate}
\end{remark}

As the following example shows, not all varifolds associated with sets of finite perimeter satisfy \Cref{hyp:EmuH}.

\begin{example}\label{ex:lens}
	Let $C_\alpha$ be the closed spherical cap of the unit sphere with opening angle $\alpha = \pi/3$ (the hemisphere has opening angle $\pi/2$) whose boundary circle lies in the plane $\{z=0\}$. Let $S = C_\alpha \cup (-C_\alpha)$, i.e.\ $S$ is the gluing of the spherical cap $C_\alpha$ with its reflection at the plane $\{z=0\}$. The surface $S$ looks like a lens, see Figure~\ref{fig:lens}. Its singular part is the circle $\Gamma_a$ of radius $a=\sqrt{3}/2$ centered at the origin and lying in the plane $\{z=0\}$. 
	Since $\CalH^2(S)<4\pi<\infty$, one can use Federer's criterion to show that $S$ is the boundary of a set $E$ of finite perimeter. 
	However, the varifold $V$ associated with $S=\partial_*E$ (cf.\ \eqref{eq:perimeter_varifold}) does not satisfy \Cref{hyp:EmuH}. In fact, $V$ does not satisfy \Cref{hyp:generalised_mean_curvature}, but the more general \Cref{hyp:generalised_mean_curvature_bdr}. 
	Indeed, in view of \cite[4.4,\,4.7]{Allard}, there holds $\mu_V = \CalH^2\llcorner S$, $\beta_V = \sqrt{3}\CalH^1\llcorner \Gamma_a$, and
	\begin{equation}
	\delta V(X) = -\int_{S}\langle X,H\rangle\diff\CalH^2 + \sqrt{3}\int_{\Gamma_a}\frac{\langle X(x),x\rangle}{|x|} \diff \CalH^1(x)
	\end{equation}	
	where $H$ is the mean curvature of the spherical caps $\pm C_\alpha$. Notice that $\theta^1(\beta_V,x)=\sqrt{3}$ for all $x\in\Gamma_a$. In other words, $\beta_V$ does not have integer multiplicity even though $\theta^2(\mu_V,x) = 1$ for all $x\in S$. Notice also that $V$ satisfies the hypothesis of \Cref{thm:LY varifold} for all $c_0\in \R$. 
\end{example}

The set $E$ in \Cref{hyp:EmuH} corresponds to an enclosed volume in the following sense, where the algebraic volume does not depend on the point $x_0\in \R^3$.

\begin{prop}\label{prop:algebraic_volume}
	Suppose $V, H, E, \Theta$ satisfy \Cref{hyp:EmuH} with $\spt\mu_V$ compact. Then 
	\begin{align}
		\CalV(V, x_0)= \int_{E}\Theta\diff\CalL^3 \eqdef\CalV(V)\qquad \text{ for all }x_0\in \R^3.
	\end{align}
\end{prop}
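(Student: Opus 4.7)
The strategy is to apply the divergence identity \eqref{eq:generalized_div_thm} to the vector field $X(x) = \frac{1}{3}(x-x_0)$, which satisfies $\divergence X \equiv 1$ and would immediately yield both sides of the desired identity. Since $X$ is not compactly supported, we truncate it by a cutoff and then let the cutoff exhaust $\R^3$, using that both $\spt\mu_V$ and $\spt(\CalL^3\llcorner E)$ are bounded.

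\textbf{Step 1 (Boundedness of the enclosed volume).} By hypothesis $\spt\mu_V$ is compact, and the diameter condition \eqref{eq:diam-cond-enclosing_varifolds} gives $\diam\spt(\CalL^3\llcorner E)\leq\diam\spt\mu_V<\infty$. Hence there exists $R_0>0$ such that both $\spt\mu_V\subset B_{R_0}(x_0)$ and $\spt(\CalL^3\llcorner E)\subset B_{R_0}(x_0)$.

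\textbf{Step 2 (Cutoff).} Fix a smooth cutoff $\varphi\in C^\infty_c(\R^3)$ with $0\leq\varphi\leq 1$, $\varphi\equiv 1$ on $B_1(0)$ and $\spt\varphi\subset B_2(0)$, and set $\varphi_R(x)\defeq\varphi((x-x_0)/R)$ and
\begin{equation}
X_R(x)\defeq \tfrac{1}{3}\varphi_R(x)(x-x_0).
\end{equation}
Then $X_R$ is smooth with compact support, hence Lipschitz, and \eqref{eq:generalized_div_thm} applies.

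\textbf{Step 3 (Evaluation).} For any $R\geq R_0$, the cutoff $\varphi_R$ equals $1$ on $B_{R_0}(x_0)$, which contains $\spt\mu_V$. Therefore
\begin{equation}
-\int_{\mathbb G_2^{\mathrm o}(\R^3)}\langle X_R(x),\star\xi\rangle\diff V(x,\xi)=-\tfrac{1}{3}\int_{\mathbb G_2^{\mathrm o}(\R^3)}\langle x-x_0,\star\xi\rangle\diff V(x,\xi)=\CalV(V,x_0),
\end{equation}
where we use in particular that the integral defining $\CalV(V,x_0)$ exists because $\spt\mu_V$ is compact. On the other hand, a direct calculation yields
\begin{equation}
\divergence X_R(x)=\varphi_R(x)+\tfrac{1}{3}\langle\nabla\varphi_R(x),x-x_0\rangle,
\end{equation}
which equals $1$ on $B_{R_0}(x_0)\supset\spt(\CalL^3\llcorner E)$. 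Consequently, $(\divergence X_R)\Theta=\Theta$ $\CalL^3$-almost everywhere on $E$, so
\begin{equation}
\int_E(\divergence X_R)\,\Theta\diff\CalL^3=\int_E\Theta\diff\CalL^3,
\end{equation}
and \eqref{eq:generalized_div_thm} yields $\CalV(V,x_0)=\int_E\Theta\diff\CalL^3$. As the right-hand side is manifestly independent of $x_0$, so is $\CalV(V,x_0)$, justifying the definition $\CalV(V)\defeq\int_E\Theta\diff\CalL^3$.

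\textbf{Anticipated difficulty.} The argument is essentially a one-line application of \eqref{eq:generalized_div_thm} once the cutoff is in place; the only subtle point is verifying that $\spt(\CalL^3\llcorner E)$ is bounded, which is precisely why the diameter hypothesis \eqref{eq:diam-cond-enclosing_varifolds} was built into \Cref{hyp:EmuH}. Without it one could not absorb the error term $\tfrac{1}{3}\langle\nabla\varphi_R,x-x_0\rangle$ in the divergence computation.
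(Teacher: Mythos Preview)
Your proof is correct and follows precisely the approach of the paper, which simply says to apply \eqref{eq:generalized_div_thm} with $X(x)=x-x_0$ ``suitably cutoff away from $\spt\mu_V$ and $\spt(\CalL^3\llcorner E)$''; you have merely made this cutoff explicit. The key observation in both cases is that the diameter condition \eqref{eq:diam-cond-enclosing_varifolds} forces $\spt(\CalL^3\llcorner E)$ to be bounded, which you identify correctly.
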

\begin{proof}
	Since $\spt\mu_V$ is compact, so is $\spt(\CalL^3\llcorner E)$ by \Cref{hyp:EmuH}. We may thus apply \eqref{eq:generalized_div_thm} with $X(x)=x-x_0$, suitably cutoff away from $\spt\mu_V$ and $\spt(\CalL^3\llcorner E)$.
\end{proof}

Under suitable assumptions, the concentrated volume can be computed by \eqref{eq:generalized_div_thm}, too.

\begin{lem}\label{lem:cvol-Alexandrov}
	Suppose $V,H,E,\Theta$ satisfy \Cref{hyp:EmuH}. Let $x_0\in\R^3$ and assume
	\begin{equation}\label{eq:hyp:cvol-Alexandrov}
		\lim_{\rho\to\infty}\frac{1}{\rho^2}\int_{E\cap B_\rho(x_0)}\Theta\diff\CalL^3 =0.
	\end{equation}	
	Then we have
	\begin{equation}\label{eq:cvol-AlexandrovTheta}
		\CalV_c(V,x_0) = \int_E\frac{\Theta(x)}{|x-x_0|^2}\diff\CalL^3(x),
	\end{equation}
	provided both sides exist.
\end{lem}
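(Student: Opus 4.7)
The natural strategy is to apply the divergence identity \eqref{eq:generalized_div_thm} with the singular vector field $X(x) = (x - x_0)/|x-x_0|^2$, whose Euclidean divergence on $\R^3 \setminus \{x_0\}$ equals $|x-x_0|^{-2}$. Since $X$ is neither Lipschitz nor compactly supported, the plan is to test \eqref{eq:generalized_div_thm} against a two-parameter family of smooth cutoffs and then send the regularisation parameters to their limits.

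Concretely, fix smooth one-variable cutoffs $\chi_\epsilon, \psi_\rho \colon [0,\infty) \to [0,1]$ with $\chi_\epsilon \equiv 0$ on $[0,\epsilon/2]$, $\chi_\epsilon \equiv 1$ on $[\epsilon,\infty)$ and $|\chi_\epsilon'| \le 4/\epsilon$, while $\psi_\rho \equiv 1$ on $[0,\rho]$, $\psi_\rho \equiv 0$ on $[2\rho,\infty)$, and $|\psi_\rho'| \le 2/\rho$. Put
\begin{equation*}
    X_{\epsilon,\rho}(x) \defeq \chi_\epsilon(|x-x_0|)\,\psi_\rho(|x-x_0|)\,\frac{x-x_0}{|x-x_0|^2}.
\end{equation*}
This is a compactly supported $C^\infty$ vector field (smooth at $x_0$ because $\chi_\epsilon$ vanishes near $0$), so \Cref{hyp:EmuH} applies. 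A direct radial computation (write $r = |x-x_0|$, $f = \chi_\epsilon \psi_\rho$) yields
\begin{equation*}
    \divergence X_{\epsilon,\rho} = \frac{f(r)}{r^2} + \frac{f'(r)}{r} = \frac{\chi_\epsilon(r)\psi_\rho(r)}{r^2} + \frac{\chi_\epsilon'(r)\psi_\rho(r) + \chi_\epsilon(r)\psi_\rho'(r)}{r}.
\end{equation*}

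Plugging $X_{\epsilon,\rho}$ into \eqref{eq:generalized_div_thm}, the left-hand side equals $-\int \chi_\epsilon \psi_\rho |x-x_0|^{-2}\langle x-x_0,\star\xi\rangle \diff V$; since $\chi_\epsilon \psi_\rho \to 1$ pointwise off $\{x_0\}$ and is dominated by the integrand of $\CalV_c(V,x_0)$ (which exists by assumption), dominated convergence gives the limit $\CalV_c(V,x_0)$ as $\epsilon \to 0+$ and $\rho \to \infty$. For the main bulk term on the right, $\chi_\epsilon \psi_\rho \Theta/r^2 \to \Theta/r^2$ pointwise on $E$ and is dominated by $\Theta/r^2$, whose integrability on $E$ is exactly the existence of the right-hand side of \eqref{eq:cvol-AlexandrovTheta}.

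It remains to show that the two boundary error terms vanish in the limit. For the inner one, using $\mathrm{supp}(\chi_\epsilon') \subset [\epsilon/2, \epsilon]$ and $|\chi_\epsilon'| \le 4/\epsilon$,
\begin{equation*}
    \left|\int_E \frac{\chi_\epsilon'\,\psi_\rho}{r}\,\Theta \diff\CalL^3\right| \le \frac{8}{\epsilon^2}\int_{E \cap B_\epsilon(x_0)} \Theta \diff \CalL^3 \le 8\int_{E \cap B_\epsilon(x_0)}\frac{\Theta}{r^2}\diff\CalL^3,
\end{equation*}
where the last bound uses $1/r^2 \ge 1/\epsilon^2$ on $B_\epsilon(x_0)$; this tends to $0$ as $\epsilon \to 0+$ by dominated convergence, using the assumed integrability of $\Theta/|x-x_0|^2$ on $E$. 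For the outer one, $\mathrm{supp}(\psi_\rho') \subset [\rho, 2\rho]$ with $|\psi_\rho'| \le 2/\rho$ gives
\begin{equation*}
    \left|\int_E \frac{\chi_\epsilon\,\psi_\rho'}{r}\,\Theta \diff\CalL^3\right| \le \frac{2}{\rho^2}\int_{E\cap B_{2\rho}(x_0)} \Theta \diff\CalL^3,
\end{equation*}
which tends to $0$ as $\rho \to \infty$ by hypothesis \eqref{eq:hyp:cvol-Alexandrov}. Combining these four limits in the identity $-\int \langle X_{\epsilon,\rho},\star\xi\rangle \diff V = \int_E (\divergence X_{\epsilon,\rho})\Theta \diff\CalL^3$ yields \eqref{eq:cvol-AlexandrovTheta}. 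The only slightly delicate point is controlling the inner error without any a priori bound on $\Theta$ near $x_0$; the elementary estimate $\Theta \le r^2 \cdot \Theta/r^2$ on $B_\epsilon(x_0)$, which converts the $1/\epsilon^2$ prefactor into an absolutely convergent tail of the right-hand side, is exactly what makes this go through.
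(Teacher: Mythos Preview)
Your proof is correct and follows essentially the same approach as the paper: apply the generalized divergence theorem~\eqref{eq:generalized_div_thm} to a two-parameter truncation of the singular field $(x-x_0)/|x-x_0|^2$ and pass to the limits. The only difference is cosmetic: the paper reuses the explicit Lipschitz cutoff $\varphi(t)=(\max\{t,\sigma\}^{-2}-\rho^{-2})_+$ from the monotonicity formula~\eqref{eq:varphi}, obtaining an identity with boundary terms on both the varifold and the volume side, whereas you use generic smooth radial cutoffs which isolate the error terms purely on the volume side; both routes require exactly the same ingredients (existence of $\CalV_c(V,x_0)$, integrability of $\Theta/|x-x_0|^2$ on $E$, and hypothesis~\eqref{eq:hyp:cvol-Alexandrov}) to close the argument.
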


\begin{remark}\label{rem:cvol-Alexandrov}	
	\begin{enumerate}[(i)]
		\item \label{item:exist_vol_exist_cvol} By \Cref{prop:cvol existence}, if $V\in \mathbb{V}^{\mathrm{o}}_2(\R^3)$ and $H\in L^2_{\mathrm{loc}}(\mu_V;\R^3)$ satisfy \Cref{hyp:generalised_mean_curvature} and if $\CalV(V,x_0)$ exists, then also $\CalV_c(V,x_0)$ exists.
		
		\item\label{item:m_density_Theta} Suppose $\int_{E}\Theta\diff\CalL^3<\infty$.
		By \Cref{lem:subcrit_integral} applied to the measure $\Theta\CalL^3\llcorner E$, the right hand side of \eqref{eq:cvol-AlexandrovTheta} exists if for some $m>2$ we have
		\begin{align}\label{eq:hyp:cvol-Alexandrov3}
			\limsup_{\sigma\to 0+} \frac{1}{\sigma^m}\int_{E\cap B_\sigma(x_0)}\Theta\diff\CalL^3 <\infty.
		\end{align}
		As a consequence of the Lebesgue differentiation theorem, this is true for $m=3$ and $\CalL^3$-almost all $x_0\in E$ (cf.\ \cite[2.9.8]{Federer}). However, not all $\Theta\in L^1_{\mathrm{loc}}(\CalL^3\llcorner E;\N)$ and $x_0\in\R^3$ satisfy~\eqref{eq:hyp:cvol-Alexandrov3}. This can be seen by taking $\Theta(x)\defeq \lceil \abs{x-x_0}^{-2}\rceil$.
		 Nevertheless, \eqref{eq:hyp:cvol-Alexandrov3} is clearly satisfied if $\Theta\in L^{\infty}(\CalL^3\llcorner E;\N)$.
	\end{enumerate}
\end{remark}

\begin{proof}[Proof of \Cref{lem:cvol-Alexandrov}]
	Since $\CalV_c(V,x_0)$ exists, we have
	\begin{align}\label{eq:vol alexandrov 1}
		\int_{\mathbb{G}^{\mathrm{o}}_2(\R^3)}\frac{\abs{\langle x-x_0, \star\xi\rangle}}{\abs{x-x_0}^2}\diff V(x,\xi)<\infty.
	\end{align}
	Now, let $0<\sigma<\rho$ and let $B_\rho, A_\rho$ be as in \Cref{lem:monotonicity}. Moreover, let $\varphi$ be as in \eqref{eq:varphi}, $X(x)\defeq x-x_0$, $\Phi(x)\defeq\varphi(\abs{x-x_0})$ for $x\in \R^3$. For $\CalL^3$-almost every $x\in \R^3$ we find
	\begin{align}
		\divergence \big(\Phi X\big)(x)
		& = \left\lbrace\begin{array}{ll}
			3(\sigma^{-2}-\rho^{-2}) & \text{ for } x\in B_\sigma \\
			|X(x)|^{-2}- 3\rho^{-2}& \text{ for } x\in B_\rho\setminus B_\sigma\\
			0 & \text{ for } x\in \R^3\setminus B_\rho.
		\end{array}\right.
	\end{align}
	Thus \eqref{eq:generalized_div_thm} implies
	\begin{align}
		& -\frac{1}{\sigma^2}\int_{A_\sigma}\langle X(x), \star\xi\rangle\diff V(x,\xi)+ \frac{1}{\rho^2}\int_{A_\rho}\langle X(x),\star\xi\rangle\diff V(x,\xi) - \int_{A_\rho\setminus A_\sigma}\frac{\langle X(x), \star\xi\rangle}{\abs{X(x)}^2}\diff V(x,\xi) \\
		&\quad= \frac{3}{\sigma^2}\int_{E\cap B_\sigma}\Theta\diff\CalL^3- \frac{3}{\rho^2}\int_{E\cap B_\rho}\Theta\diff\CalL^3 + \int_{E\cap B_\rho\setminus B_\sigma} \frac{\Theta(x)}{\abs{X(x)}^2}\diff\CalL^3(x).\label{eq:perimeter proof 1}
	\end{align}
	We analyze each term in \eqref{eq:perimeter proof 1} separately.
	First, as $\sigma\to 0+$, the first term on the left vanishes, since \eqref{eq:vol alexandrov 1} yields
	\begin{align}
		\frac{1}{\sigma^2}\int_{A_\sigma}\abs{\langle X(x), \star\xi\rangle}\diff V(x,\xi)\leq \int_{A_\sigma}\frac{\abs{\langle X(x), \star\xi\rangle}}{\abs{X(x)}^2}\diff V(x,\xi)\to 0.
	\end{align}
	Here we used that $\mu_V(\{x_0\})=0$ by \Cref{rem:subcrit_integral}\eqref{item:subcrit_loc}.
	The first term on the right hand side of \eqref{eq:perimeter proof 1} goes to zero as $\sigma\to 0$, since the right hand side of \eqref{eq:cvol-AlexandrovTheta} exists. For the second term on the left, taking $0<r<\rho$ and splitting the integral we obtain 
	\begin{align}
		&\frac{1}{\rho^2}\int_{A_\rho}\abs{\langle X(x), \star\xi\rangle}\diff V(x,\xi) \\
		&\qquad \leq \int_{\mathbb{G}^{\mathrm{o}}_2(\R^3)\setminus A_r} \frac{\abs{\langle X(x), \star\xi\rangle}}{\abs{X(x)}^2}\diff V(x,\xi) + \frac{1}{\rho^2} \int_{A_r}\abs{\langle X(x), \star\xi\rangle}\diff V(x,\xi),
	\end{align}
	which goes to zero by \eqref{eq:vol alexandrov 1}, if we send first $\rho\to\infty$ and then $r\to\infty$.
	Taking $\rho\to\infty$ the second term on the right of \eqref{eq:perimeter proof 1} vanishes by \eqref{eq:hyp:cvol-Alexandrov}. Thus, if we let first $\sigma\to 0$ and then $\rho \to\infty$ in \eqref{eq:perimeter proof 1} and use that both sides of \eqref{eq:cvol-AlexandrovTheta} exist, the claim follows.
\end{proof}

By the preceding discussion, the statement of \Cref{cor:LY varif area dec} can be simplified if $V$ is a varifold with enclosed volume. For simplicity, we only consider the case where $\spt\mu_V$ is compact.

\begin{cor}\label{cor:LY finite perimeter}
	Suppose $V,H,E, \Theta$ satisfy \Cref{hyp:EmuH} with $\spt\mu_V$ compact. Then 
	\begin{align}\label{eq:LY varifold encl vol}
		\theta^2(\mu_V,x_0)\leq \frac{1}{4\pi} \CalH_{c_0}(V) + \frac{c_0}{2\pi}\int_{E}\frac{\Theta(x)}{\abs{x-x_0}^2}\diff\CalL^3(x)
	\end{align}
	for all $x_0\in \R^3$, provided the second term on the right hand side exists.
\end{cor}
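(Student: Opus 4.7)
The plan is to reduce \Cref{cor:LY finite perimeter} to the combination of the general varifold Li--Yau inequality (\Cref{cor:LY varif area dec}) and the representation of the concentrated volume via $E$ and $\Theta$ provided by \Cref{lem:cvol-Alexandrov}. First, I would dispose of the trivial case: if $\CalH_{c_0}(V)=\infty$ there is nothing to prove, so assume $\CalH_{c_0}(V)<\infty$. By \Cref{rem:H L2 loc} this upgrades the integrability hypothesis in \Cref{hyp:EmuH} from $H\in L^1_\mathrm{loc}(\mu_V;\R^3)$ to $H\in L^2_\mathrm{loc}(\mu_V;\R^3)$, so \Cref{hyp:generalised_mean_curvature} holds in the stronger form required by both \Cref{cor:LY varif area dec} and \Cref{lem:cvol-Alexandrov}.

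Next, I would handle the density at infinity and the existence of $\CalV_c(V,x_0)$. Since $\spt \mu_V$ is compact, clearly $\theta^{*2}(\mu_V,\infty)=0$, so the first term on the right hand side of \eqref{eq:LY varif area dec} drops out. By the diameter condition \eqref{eq:diam-cond-enclosing_varifolds} in \Cref{hyp:EmuH}, the set $\spt(\CalL^3\llcorner E)$ is also bounded; combined with $\Theta\in L^1_\mathrm{loc}(\CalL^3\llcorner E;\N)$ this gives $\int_E \Theta\diff\CalL^3<\infty$, so \Cref{prop:algebraic_volume} applies and $\CalV(V,x_0)$ exists. Then \Cref{prop:cvol existence} (available thanks to the just-established $L^2_\mathrm{loc}$ regularity of $H$) ensures that $\CalV_c(V,x_0)$ exists as well.

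The next step is to identify $\CalV_c(V,x_0)$ with the volume integral appearing in the statement. Since $E$ has bounded support and $\int_E\Theta\diff\CalL^3<\infty$, for all $\rho$ sufficiently large $E\cap B_\rho(x_0)=E$ up to a $\CalL^3$-null set, hence
\begin{equation}
\lim_{\rho\to\infty}\frac{1}{\rho^2}\int_{E\cap B_\rho(x_0)}\Theta\diff\CalL^3=\lim_{\rho\to\infty}\frac{1}{\rho^2}\int_{E}\Theta\diff\CalL^3=0,
\end{equation}
so hypothesis \eqref{eq:hyp:cvol-Alexandrov} of \Cref{lem:cvol-Alexandrov} is satisfied. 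Since both $\CalV_c(V,x_0)$ and, by assumption, the right hand side integral exist, the conclusion of \Cref{lem:cvol-Alexandrov} gives
\begin{equation}
\CalV_c(V,x_0)=\int_E\frac{\Theta(x)}{|x-x_0|^2}\diff\CalL^3(x).
\end{equation}

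Finally, plugging this identity together with $\theta^{*2}(\mu_V,\infty)=0$ into \eqref{eq:LY varif area dec} from \Cref{cor:LY varif area dec} yields the claimed inequality. There is no genuine obstacle here; the whole argument is essentially a bookkeeping of hypotheses, and the only point requiring mild care is verifying that the integrability upgrade for $H$ propagates through the cited existence results so that \Cref{lem:cvol-Alexandrov} is legitimately applicable with both sides of \eqref{eq:cvol-AlexandrovTheta} finite.
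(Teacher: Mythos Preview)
Your proposal is correct and follows essentially the same route as the paper's proof, which is a terse one-liner invoking \Cref{rem:H L2 loc}, \Cref{rem:cvol-Alexandrov}\eqref{item:exist_vol_exist_cvol}, \Cref{cor:LY varif area dec}, and \Cref{lem:cvol-Alexandrov}. You have simply unpacked the verification of hypotheses more explicitly (the $L^2_\mathrm{loc}$ upgrade, the boundedness of $\spt(\CalL^3\llcorner E)$, the vanishing of $\theta^{*2}(\mu_V,\infty)$, and condition \eqref{eq:hyp:cvol-Alexandrov}), which is entirely in line with the paper's intent.
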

\begin{proof}
	By \Cref{hyp:EmuH} we find that $\spt(\CalL^3\llcorner E)$ is compact, so that using \Cref{rem:H L2 loc} and \Cref{rem:cvol-Alexandrov}\eqref{item:exist_vol_exist_cvol} we find that the assumptions of \Cref{cor:LY varif area dec} and \Cref{lem:cvol-Alexandrov} are satisfied. The result then directly follows using \eqref{eq:cvol-AlexandrovTheta}.
\end{proof}

	\section{The smooth setting}\label{sec:smooth}
In this section, we will transfer the general varifold Li--Yau inequalities to the setting of smoothly immersed surfaces.

\subsection{Proofs of the Li--Yau inequalities}

\Cref{thm:LY smooth} is an easy consequence of the varifold result.

\begin{proof}[Proof of \Cref{thm:LY smooth}]
	The claim follows directly from \Cref{cor:LY varif area dec} if we consider the varifold associated to the immersion $f$, cf.\ \Cref{ex:varifold_of_immersion}.
\end{proof}

We now show that any Alexandrov immersion induces a varifold with enclosed volume.

\begin{lem} \label{lem:Alexandrov_immersions}
	Let $\Sigma$ be a closed surface and let $f\colon \Sigma \to\R^3$ be an Alexandrov immersion with $\Sigma=\partial M$, $f=F\vert_{\Sigma}$ and $F\colon M\to\R^3$ as in \Cref{defi:Alexandrov_immersions}. Let 
	$V$ be the oriented $2$-varifold on $\R^3$ associated to $(\Sigma,f)$ as in \Cref{ex:varifold_of_immersion}. Then, there holds
	\begin{equation}
		- \int_{\mathbb G_2^\mathrm{o}(\R^3)} \langle X(x), \star\xi \rangle \diff V(x,\xi) = \int_{F[M]} (\divergence X)(x) \CalH^{0}(F^{-1}\{x\}) \diff \CalL^3(x)  
	\end{equation}
	for all Lipschitz $X\colon \R^3\to\R^3$ with compact support. In particular, with $E\defeq F[M]$, $\Theta\defeq \CalH^{0}(F^{-1}\{\cdot\})$ we see that $V, H, E, \Theta$ satisfy \Cref{hyp:EmuH}.
\end{lem}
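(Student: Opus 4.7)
The plan is to apply the classical divergence theorem on the compact $3$-manifold $M$, using the fact that the immersion $F\colon M\to\R^3$ between equidimensional manifolds is a local diffeomorphism, and then push the resulting identity down to $\R^3$ via the area formula. First, I would rewrite the left hand side using the pushforward structure of $V$ (as in \Cref{ex:varifold_of_immersion}). Since $V$ is the pushforward of $\mu$ under $p\mapsto (f(p),\star n(p))$ and the Hodge star satisfies $\star\star=\Id$ on $\R^3$, one obtains
\[
-\int_{\mathbb G_2^\mathrm{o}(\R^3)}\langle X(x),\star\xi\rangle\diff V(x,\xi)=-\int_\Sigma\langle X\circ f,n\rangle\diff\mu,
\]
so it suffices to match this last integral with the right hand side of the claim.

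Equip $M$ with the pullback metric $g_M\defeq F^*\langle\cdot,\cdot\rangle$, under which $F$ is a local isometry. Since $\Sigma=\partial M$ is orientable, so is $M$, and I would fix the orientation on $M$ for which $F$ is locally orientation-preserving; the resulting Stokes orientation on $\partial M$ uses the outer unit normal $-\nu$, with $\nu$ the inner normal of \Cref{defi:Alexandrov_immersions}. As $\diff F_p$ is invertible at each $p\in M$, the vector field $Y(p)\defeq(\diff F_p)^{-1}(X(F(p)))$ is smooth and compactly supported on $M$. Being a local isometry, $F$ intertwines divergences, $\divergence_{g_M}Y=(\divergence X)\circ F$, and $\langle Y,-\nu\rangle_{g_M}=\langle X\circ f,-\diff F(\nu)\rangle=-\langle X\circ f,n\rangle$, while $\diff\mathrm{vol}_{g_M}|_\Sigma=\diff\mu$. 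Stokes' theorem on $(M,g_M)$ therefore gives
\[
\int_M(\divergence X)\circ F\diff\mathrm{vol}_{g_M}=-\int_\Sigma\langle X\circ f,n\rangle\diff\mu.
\]
The area formula applied to $F$, whose Jacobian with respect to $g_M$ is identically $1$, yields
\[
\int_M(\divergence X)\circ F\diff\mathrm{vol}_{g_M}=\int_{F[M]}(\divergence X)(x)\CalH^0(F^{-1}\{x\})\diff\CalL^3(x),
\]
and combining the two displays proves the divergence identity. A standard density/truncation argument extends the identity from smooth to Lipschitz $X$ with compact support.

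For the final assertion, set $E\defeq F[M]$ and $\Theta\defeq\CalH^0(F^{-1}\{\cdot\})$. \Cref{ex:varifold_of_immersion} shows that $V$ and $H$ satisfy \Cref{hyp:generalised_mean_curvature}. Properness of $F$ (since $M$ is compact) makes $\Theta$ everywhere finite, $\N$-valued and Borel, and the area formula with integrand $\mathbf 1$ gives $\int_E\Theta\diff\CalL^3=\mathrm{vol}_{g_M}(M)<\infty$, so $\Theta\in L^1_\mathrm{loc}(\CalL^3\llcorner E;\N)$. To verify the diameter condition \eqref{eq:diam-cond-enclosing_varifolds}, fix $x_0\in\R^3$ and consider $\phi\colon M\to\R$, $\phi(p)\defeq|F(p)-x_0|^2$. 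At any interior critical point $p$, $\diff F_p^{\mathrm T}(F(p)-x_0)=0$, and invertibility of $\diff F_p$ forces $F(p)=x_0$; hence $\phi(p)=0$ cannot be a strict maximum. Therefore $\max_M\phi$ is attained on $\Sigma$, and applying this fact twice to the squared-distance between two copies of $M$ gives $\diam F[M]\leq\diam f[\Sigma]=\diam\spt\mu_V$, which implies \eqref{eq:diam-cond-enclosing_varifolds}.

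The main obstacle is the careful bookkeeping of orientations and signs: one must ensure the orientation on $M$ is chosen so that $F$ is locally orientation-preserving, so that Stokes' theorem produces precisely the sign matching the inner-normal convention $n=\diff F(\nu)$. Once this is in place, the remaining steps (Stokes, area formula, and the maximum-principle argument for $|F-x_0|^2$) are standard consequences of $F$ being an immersion between manifolds of equal dimension.
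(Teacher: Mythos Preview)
Your proof is correct and follows essentially the same route as the paper: pull back $X$ to a vector field on $(M,F^*\langle\cdot,\cdot\rangle)$, apply the divergence theorem there, and push the result to $\R^3$ via the area formula. One small remark: the orientability of $M$ does not follow from that of $\partial M$ in general, but here it does follow because $F$ is a local diffeomorphism into $\R^3$ (which you implicitly use when choosing the orientation); the paper sidesteps this entirely by invoking the Riemannian divergence theorem rather than Stokes, and it also leaves the diameter condition~\eqref{eq:diam-cond-enclosing_varifolds} implicit, whereas you supply a clean maximum-principle argument for it.
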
	

\begin{proof} By an approximation argument, it suffices to consider $X\in C_c^{1}(\R^3;\R^3)$.
	Denote with $\Omega$ the Riemannian measure on $M$ induced by the pullback metric $g_F\defeq F^*\langle\cdot,\cdot\rangle$, let $\mu$ be the induced measure on~$\Sigma$, and let $\nu$ be the inner unit normal on $\Sigma$. Given any vector field $X \in C^1(\R^3;\R^3)$, we define the vector field $X^*$ on $M$ by $X^*(p) = (\mathrm d F_p)^{-1}(X(F(p)))$. By~\eqref{eq:integral_push_forward} and since $n=\diff F(\nu)$, we compute 
	\begin{align}
		-\int_{\mathbb G^\mathrm{o}_2(\R^3)} \langle X(x),\star\xi\rangle\diff V(x,\xi) & = - \int_{\Sigma} \langle X\circ f, n \rangle\diff \mu = \int_{\partial M}g_F(X^*,-\nu)\diff \mu.
	\end{align}
	Since $(M,g_F)$ is flat, we have $\divergence_{g_F} X^* = (\divergence X) \circ F$. Hence, by the divergence theorem for Riemannian manifolds (see \cite[Theorem~5.11(2)]{Sakai}) and the area formula,
	\begin{align}
		\int_{\partial M}g_F(X^*,-\nu)\diff \mu = \int_M (\divergence X)\circ F\diff \Omega = \int_{F[M]}(\divergence X)(x)\CalH^0(F^{-1}\{x\})\diff \CalL^3(x)	
	\end{align} 
	which implies the conclusion. 
\end{proof}

Equipped with this tool we can now prove \Cref{thm:LY Alexandrov}.

\begin{proof}[Proof of \Cref{thm:LY Alexandrov}]
	By \Cref{lem:Alexandrov_immersions}, $V, H, E\defeq F[M], \Theta\defeq \CalH^{0}(F^{-1}\{\cdot\})$ satisfy \Cref{hyp:EmuH}. Since $M$ is compact and $F$ is a local diffeomorphism, there exists $k\in \N$ such that
	\begin{align}\label{eq:preimage bound}
		\Theta(x)=\CalH^{0}(F^{-1}\{x\}) \leq k \quad \text{for all }x\in E=F[M],
	\end{align}
	and as a consequence of \Cref{lem:cvol-Alexandrov} and \Cref{rem:cvol-Alexandrov}\eqref{item:exist_vol_exist_cvol} and \eqref{item:m_density_Theta} we find
	\begin{align}\label{eq:cvol Alexandrov}
		\CalV_c(f, x_0) = \int_{F[M]}\frac{\CalH^{0}(F^{-1}\{x\})}{\abs{x-x_0}^2}\diff\CalL^3(x) \quad \text{for all }x_0\in \R^3.
	\end{align}
	The statement then follows from \Cref{cor:LY finite perimeter}.
\end{proof}

\begin{remark}

		The results of \Cref{thm:LY smooth} and \Cref{thm:LY Alexandrov} are sharp in the sense that equality can be achieved asymptotically for every $c_0\in \R$. Indeed, let $\S^2\subset \R^3$ be the unit sphere, and let $f\colon \S^2\to\R^3, f(x)=rx$ denote the parametrization of the round sphere $\partial B_r(0)\subset \R^3$ with radius $r>0$ and the orientation given by the inner unit normal. This is clearly an Alexandrov immersion (with $M=B_1(0)$, $F(x)=rx$) and hence by \eqref{eq:cvol Alexandrov}, we have
		\begin{align}\label{eq:smiley}
			\CalV_{c}(f,x_0) &=  \int_{B_r(0)}\frac{1}{\abs{x-x_0}^2}\diff\CalL^3(x) = \begin{cases}
				2\pi r & \text{ if }x_0\in \partial B_r(0),\\
				4\pi r & \text{ if }x_0=0.
			\end{cases}
		\end{align}	
			To verify the last equality in \eqref{eq:smiley} we use the original surface integral definition \eqref{eq:def conc vol} for the concentrated volume and that $n=-f/r$. The claim then follows from
		\begin{align}
			\CalV_c(f,x_0)= \frac{1}{r}\int_{\S^2}\frac{\langle f-x_0,f\rangle}{\abs{f-x_0}^2}\diff\mu= \frac{1}{r}\int_{\S^2} \frac{r^2-\langle x_0, f\rangle}{r^2+|x_0|^2-2\langle f, x_0\rangle}\diff \mu.
		\end{align}
	
		If now $x_0\in \partial B_r(0)$, Inequality \eqref{eq:LY Immersion} reads
		\begin{align}
			1=\CalH^{0}(f^{-1}\{x_0\})\leq \frac{1}{4\pi}\CalH_{c_0}(f) + \frac{c_0}{2\pi}\CalV_c(f,x_0) = \frac{1}{4}\left(c_0r-2\right)^2 + c_0 r \quad \text{ for all }r>0,
		\end{align}
		where the right hand side converges to $1$ as $r\to 0+$. In the case $c_0=0$, equality is achieved by any round sphere.
\end{remark}

\subsection{A scale-invariant version}\label{sec:scale invariant}

Clearly, for $x_0=0$ the left hand sides of the Li--Yau inequalities in \Cref{thm:LY smooth} and \Cref{thm:LY Alexandrov} are invariant under rescalings of the immersion, whereas the right hand sides are not. We will now prove a scale-invariant version of the inequality, involving the  \emph{$L^2$-CMC-deficit} of an immersion $f\colon \Sigma\to\R^3$ of an oriented surface $\Sigma$, given by
\begin{align}\label{eq:def cmc deficit}
	\bar{\CalH}(f) = \frac{1}{4}\int_{\Sigma}(H_{\mathrm{sc}}-\bar H_{\mathrm{sc}})^2\diff\mu =\inf_{c_0\in \R}\CalH_{c_0}(f),
\end{align}
cf.\ \eqref{eq:intro:CMC deficit}. Here $\bar H_{\mathrm{sc}}\defeq \CalA(f)^{-1} {\int_{\Sigma} H_{\mathrm{sc}}\diff\mu}$ denotes the \emph{average scalar mean curvature}, provided the latter integral exists. 
Note that $\bar{\CalH}(f) = 0$ if and only if $f$ is an immersion with constant mean curvature, a \emph{CMC-immersion}, justifying the terminology. 
We obtain the following Li--Yau inequality which is invariant under rescaling and also under reversing the orientation on $\Sigma$.
\begin{cor}\label{thm:LY scale inv}
	Let $f\colon \Sigma\to\R^3$ be an immersion of a compact oriented surface $\Sigma$ without boundary. Then for all $x_0\in \R^3$ we have
	\begin{align}
		\CalH^{0}(f^{-1}\{x_0\})\leq \frac{1}{4\pi} \bar{\CalH}(f) + \frac{1}{2\pi} \bar H_{\mathrm{sc}} \CalV_{c}(f,x_0) - \frac{1}{\pi \CalA(f)}\left(\CalV_c(f,x_0)\right)^2.\label{eq:LY scale inv}
	\end{align}
\end{cor}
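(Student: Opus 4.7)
The plan is to reduce to \Cref{thm:LY smooth} and optimize the resulting inequality over $c_0 \in \R$. Since $\Sigma$ is compact, $\mu_f(\Sigma)<\infty$, hence
\begin{equation}
\limsup_{\rho\to\infty}\frac{\mu(f^{-1}(B_\rho(x_0)))}{\pi\rho^2}=0,
\end{equation}
so \Cref{thm:LY smooth} yields for every $c_0\in\R$,
\begin{equation}
\CalH^0(f^{-1}\{x_0\})\le\frac{1}{4\pi}\CalH_{c_0}(f)+\frac{c_0}{2\pi}\CalV_c(f,x_0).
\end{equation}
(Existence of $\CalV_c(f,x_0)$ follows from compactness of $\Sigma$ via \Cref{prop:cvol existence}, since the algebraic volume $\CalV(V,x_0)$ clearly exists.)

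Next I would split the Helfrich integrand around the mean $\bar H_{\mathrm{sc}}$. Using $\int_\Sigma(H_{\mathrm{sc}}-\bar H_{\mathrm{sc}})\diff\mu=0$, expansion of $(H_{\mathrm{sc}}-c_0)^2=(H_{\mathrm{sc}}-\bar H_{\mathrm{sc}})^2+2(H_{\mathrm{sc}}-\bar H_{\mathrm{sc}})(\bar H_{\mathrm{sc}}-c_0)+(\bar H_{\mathrm{sc}}-c_0)^2$ gives the well-known decomposition
\begin{equation}
\CalH_{c_0}(f)=\bar\CalH(f)+\frac{\CalA(f)}{4}(\bar H_{\mathrm{sc}}-c_0)^2.
\end{equation}
Substituting this into the Li--Yau inequality above leaves $\bar\CalH(f)/(4\pi)$ plus the function
\begin{equation}
g(c_0)\defeq\frac{\CalA(f)}{16\pi}(\bar H_{\mathrm{sc}}-c_0)^2+\frac{c_0}{2\pi}\CalV_c(f,x_0),
\end{equation}
of the free parameter $c_0$, which I am free to choose.

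Finally I would minimize $g$ in $c_0$. The unique critical point is
\begin{equation}
c_0^\ast=\bar H_{\mathrm{sc}}-\frac{4\CalV_c(f,x_0)}{\CalA(f)},
\end{equation}
and plugging this back in gives
\begin{equation}
g(c_0^\ast)=\frac{\bar H_{\mathrm{sc}}\CalV_c(f,x_0)}{2\pi}-\frac{\CalV_c(f,x_0)^2}{\pi\CalA(f)},
\end{equation}
which is exactly the non-trivial part of \eqref{eq:LY scale inv}. There is no serious obstacle: the argument is essentially a one-variable completion of the square on top of the smooth Li--Yau inequality, and the only thing to check carefully is that all integrals appearing in the $c_0$-decomposition of $\CalH_{c_0}(f)$ and in $\CalV_c(f,x_0)$ are finite, which is immediate from compactness of $\Sigma$ and \Cref{lem:subcrit_integral}.
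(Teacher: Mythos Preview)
Your proof is correct and follows essentially the same approach as the paper: apply \Cref{thm:LY smooth} for arbitrary $c_0$ (noting the density-at-infinity term vanishes by compactness and that $\CalV_c(f,x_0)$ exists by \Cref{prop:cvol existence}), and then optimize the resulting quadratic in $c_0$. The paper phrases the optimization as directly expanding $\CalH_{c_0}(f)$ into a quadratic polynomial in $c_0$ and minimizing, while you first use the decomposition $\CalH_{c_0}(f)=\bar\CalH(f)+\tfrac{\CalA(f)}{4}(\bar H_{\mathrm{sc}}-c_0)^2$; both yield the same minimizer $c_0^\ast=\bar H_{\mathrm{sc}}-4\CalV_c(f,x_0)/\CalA(f)$ and the same final expression.
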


\begin{proof}
	By \Cref{prop:cvol existence}, we find that $\CalV_c(f,x_0)$ exists. We may thus use \Cref{thm:LY smooth} for any $c_0\in\R$. Expanding the right hand side of \eqref{eq:LY Immersion}, we obtain a quadratic polynomial in $c_0$. By a direct computation, this polynomial is minimal for $ c_0 =\frac{\int_\Sigma H_{\mathrm{sc}}\diff\mu - 4 \CalV_c(f,x_0)}{\CalA(f)}$ and the minimal value is precisely the right hand side of \eqref{eq:LY scale inv}.
\end{proof}

	\section{Applications}\label{sec:applications}

In this section, we discuss several applications of the Li--Yau inequalities. We first provide a lower bound on the Helfrich energy resulting in nonexistence of minimizers for the penalized Canham--Helfrich model in \Cref{subsec:nonexistence}. In \Cref{subsec:diameter} we prove some important geometric estimates involving the Helfrich energy. We then use these to prove \Cref{thm:regularity_Helfrich_problem}. Lastly, we discuss a criterion for positive total mean curvature in \Cref{subsec:pos tot mean}.

\subsection{Nonexistence of minimizers for the penalized Canham--Helfrich model}\label{subsec:nonexistence}

\begin{lem}\label{thm:Willmore-inequality}
	Suppose $V\in\mathbb V_2^\mathrm{o}(\R^3)$ and $H\in L_\mathrm{loc}^2(\mu_V)$ satisfy \Cref{hyp:generalised_mean_curvature}, $\spt\mu_V$ is compact, $c_0 < 0$, and $x_0\in\R^3$ such that $\theta^{*2}(\mu_V,x_0) \geq 1$ and $\CalV_c(V,x_0)>0$. Then there holds
	\begin{equation}\label{eq:Willmore_inequality}
		\CalH_{c_0}(V) > 4\pi.
	\end{equation}
\end{lem}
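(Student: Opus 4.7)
The plan is to apply the scale-invariant varifold Li--Yau inequality from \Cref{cor:LY varif area dec} at the point $x_0$ and then exploit the strict sign of the concentrated-volume correction. First I would observe that the hypotheses of \Cref{cor:LY varif area dec} are satisfied: the varifold $V$ and the mean curvature $H\in L^2_{\mathrm{loc}}(\mu_V;\R^3)$ fulfill \Cref{hyp:generalised_mean_curvature}, and the concentrated volume $\CalV_c(V,x_0)$ exists (in fact is assumed strictly positive). Hence
\begin{equation}
	\theta^{2}(\mu_V, x_0)\leq \theta^{* 2}(\mu_V,\infty)+ \frac{1}{4\pi}\CalH_{c_0}(V) + \frac{c_0}{2\pi} \CalV_c(V,x_0).
\end{equation}

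Next I would deal with the two density terms. Since $\spt\mu_V$ is compact, there exists $R>0$ with $\mu_V(\R^3)=\mu_V(B_R(0))<\infty$, so that $\mu_V(B_\rho(0))/(\pi\rho^2)\to 0$ as $\rho\to\infty$, giving $\theta^{*2}(\mu_V,\infty)=0$. For the left-hand side, by \Cref{rem:subcrit_integral}\eqref{item:subcrit_loc} the $2$-density $\theta^{2}(\mu_V, x_0)$ exists (being $V$ integral is not needed; only the existence of the limit from the monotonicity machinery is used), so it coincides with $\theta^{*2}(\mu_V, x_0)\geq 1$.

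Combining these, the Li--Yau inequality reduces to
\begin{equation}
	1 \,\leq\, \frac{1}{4\pi}\CalH_{c_0}(V) + \frac{c_0}{2\pi}\CalV_c(V,x_0).
\end{equation}
Now the hypotheses $c_0<0$ and $\CalV_c(V,x_0)>0$ imply that the second summand on the right is strictly negative, and therefore
\begin{equation}
	4\pi \,<\, \CalH_{c_0}(V) + 2 c_0\,\CalV_c(V,x_0) \,<\, \CalH_{c_0}(V),
\end{equation}
which is the desired strict inequality $\CalH_{c_0}(V)>4\pi$.

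There is essentially no obstacle beyond a careful bookkeeping of the hypotheses needed to invoke \Cref{cor:LY varif area dec}; the only subtle point is the passage from upper density to density at $x_0$, which is guaranteed by the monotonicity formula recorded in \Cref{rem:subcrit_integral}\eqref{item:subcrit_loc}, and the strictness of the inequality, which follows from having \emph{strict} sign conditions on both $c_0$ and $\CalV_c(V,x_0)$.
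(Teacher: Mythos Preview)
Your argument is correct and follows the same route as the paper: apply \Cref{cor:LY varif area dec}, use compact support to kill $\theta^{*2}(\mu_V,\infty)$, invoke the existence of the density (which the paper cites directly as \cite[Theorem~3.6]{Scharrer2}, equivalently your \Cref{rem:subcrit_integral}\eqref{item:subcrit_loc}) to pass from upper density to density, and conclude strictness from $c_0<0$ and $\CalV_c(V,x_0)>0$. One cosmetic slip: \Cref{cor:LY varif area dec} is not the \emph{scale-invariant} inequality (that is \Cref{thm:LY scale inv}); the inequality you actually wrote down and used is the correct one.
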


\begin{proof}
	This is a consequence of \Cref{cor:LY varif area dec} in combination with \cite[Theorem~3.6]{Scharrer2}.	
\end{proof}

\begin{remark}
	The proof of the above inequality for the Willmore functional (i.e.\ $c_0=0$) \cite[Theorem~7.2.2]{WillmoreRG} also works for the Helfrich functional provided $V$ is given by an Alexandrov immersion $f:\Sigma\to\R^3$ with inner unit normal field $n$. Indeed, denoting with $K^+$ the set of points in $\Sigma$ where both principal curvatures are nonnegative, we find
	\begin{align}
		\CalH_{c_0}(f) & \geq \frac{1}{4}\int_{K^+}|H_f - c_0n|^2\diff \mu \\
		& \geq \frac{1}{4}\int_{K^+}|H_f|^2\diff \mu + \frac{c_0^2}{4}\CalA(f) > \frac{1}{4}\int_{K^+}|H_f|^2\diff\mu \geq \int_{K^+}K\diff \mu
	\end{align} 	
	where $K$ denotes the Gauss curvature. Similarly to \cite[Lemma~7.2.1]{WillmoreRG} we see that if $f$ is an Alexandrov immersion, then
	\begin{equation}
		\int_{K^+}K\diff \mu \geq 4\pi.
	\end{equation}
\end{remark}

For all real numbers $c_0,\lambda,p$ we define the energy functional
\begin{equation}
	\CalH^{\lambda,p}_{c_0}(f) \defeq \CalH_{c_0}(f) + \lambda\CalA(f) + p\CalV(f)
\end{equation}
for all smooth immersions $f\colon\Sigma\to\R^3$ of a closed oriented surface $\Sigma$. The constants $\lambda$ and $p$ are referred to as \emph{tensile stress} and \emph{osmotic pressure}. The energy was considered by Zhong-Can and Helfrich~\cite[Equation~(1)]{ZhongHelfrich87} in the study of spherical vesicles. Each minimizer of the constrained minimization \Cref{problem:CH} is a critical point of the functional $\CalH^{\lambda,p}_{c_0}$ for some $\lambda,p$ by the method of Lagrange multipliers. This is one of the reasons why the energy $\CalH^{\lambda,p}_{c_0}$ is subject of numerous works in mathematical physics, biology and mathematics, see for instance \cite{BernardWheelerWheeler} and the references therein.

Denote with $\mathcal{S}^\infty$ the set of smoothly embedded spheres in $\R^3$. In view of \eqref{eq:intro:convergence_Helfrich_to_Willmore}, we see that
\begin{equation}\label{eq:upper_bound_infimum}
	\inf_{f\in\mathcal S^\infty}\CalH^{\lambda,p}_{c_0}(f) \leq 4\pi.
\end{equation}
In \cite[Theorem~3]{ScharrerPhD} (see also \cite[Theorem~1.9]{MondinoScharrer1}) the existence of spheres minimizing $\CalH^{\lambda,p}_{c_0}$ was shown, provided $\lambda,c_0>0$ and $p\geq0$. However, in view of~\cite{DeulingHelfrich}, $c_0<0$ is empirically more relevant in the study of red blood cells. \Cref{thm:Willmore-inequality} now reveals that the infimum in~\eqref{eq:upper_bound_infimum} is not attained whenever $c_0<0$ and $\lambda,p \geq 0$. This is actually in accordance with the results on the gradient flow in~\cite{McCoyWheeler,BlattHelfrich}. Notice also the different behaviour of the constrained gradient flow~\cite{RuppIso}.
Examining the scaling behaviour of $\CalH^{\lambda,p}_{c_0}$ evaluated at round spheres, we see that the energy functional is unbounded from below if $p<0$;
in particular, the infimum in~\eqref{eq:upper_bound_infimum} is not attained. Similarly, if $\lambda<0$ and $c_0^2+\lambda<0$, one can use surfaces of degenerating isoperimetric ratio found in \cite[Theorem 1.5]{Scharrer} 
to construct a sequence of embeddings $f_k$ in $\mathcal{S}^\infty$ such that $\CalH^{\lambda,p}_{c_0}(f_k) \to-\infty$ as $k\to\infty$.  

Despite the nonexistence of minimizers explained above, the energy functional $\CalH^{\lambda,p}_{c_0}$ remains an important subject of study, since it is the critical points of $\CalH^{\lambda,p}_{c_0}$ that are of interest.

\subsection{Diameter estimates}\label{subsec:diameter}

In this section, we will show that the Helfrich energy can be used to obtain bounds on the diameter.

\begin{lem}\label{thm:diameter_bound}
	Suppose $V\in \mathbb{V}^{\mathrm{o}}_2(\R^3)$ and $H\in L^{2}_\mathrm{loc}(\mu_V;\R^3)$ satisfy \Cref{hyp:generalised_mean_curvature}, $\spt\mu_V$ is compact, and $\CalH_{c_0}(V)>0$. Then for all $x_0\in \spt\mu_V$ we have
	\begin{align}\label{eq:diam bound below}
		\frac{\abs{2 \mu_V(\R^3) -3c_0\CalV(V,x_0)}}{2\sqrt{\mu_V(\R^3)\CalH_{c_0}(V)}}\leq \diam\spt\mu_V.
	\end{align}	
\end{lem}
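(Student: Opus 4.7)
The plan is to obtain the diameter bound by a Cauchy--Schwarz argument applied to the integral
\[
I \defeq \int_{\mathbb G_2^\mathrm{o}(\R^3)} \langle x-x_0, H(x) - c_0(\star\xi)\rangle \diff V(x,\xi),
\]
which we will compute in two different ways: on one hand via the first variation identity and the definition of $\CalV(V,x_0)$, and on the other hand by the Cauchy--Schwarz inequality, bounding $|x-x_0|$ by the diameter of $\spt\mu_V$.

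First I would evaluate $I$ explicitly. Since $\spt\mu_V$ is compact, a standard cutoff argument allows us to test the first variation identity \eqref{eq:generalised_mean_curvature} with the vector field $X(x) = x-x_0$: picking $\chi\in C_c^\infty(\R^3)$ with $\chi\equiv 1$ in a neighborhood of $\spt\mu_V$, one has $\divergence_{T(\xi)}(\chi X) = 2$ on $\spt V$, so $\delta V(\chi X) = 2\mu_V(\R^3)$. Combined with~\eqref{eq:generalised_mean_curvature} this yields $\int \langle x-x_0, H(x)\rangle \diff\mu_V(x) = -2\mu_V(\R^3)$. Using the definition of the algebraic volume $\CalV(V,x_0)$, we then obtain
\[
I = -2\mu_V(\R^3) + 3c_0\, \CalV(V,x_0).
\]

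Next I would apply the Cauchy--Schwarz inequality on $L^2(V)$:
\[
|I|^2 \leq \int_{\mathbb G_2^\mathrm{o}(\R^3)} |x-x_0|^2 \diff V(x,\xi) \cdot \int_{\mathbb G_2^\mathrm{o}(\R^3)} |H(x) - c_0(\star\xi)|^2 \diff V(x,\xi).
\]
The second factor equals $4\CalH_{c_0}(V)$ by definition. For the first factor, we use that the projection of $\spt V$ to $\R^3$ is contained in $\spt\mu_V$, so that $|x-x_0|\leq \diam\spt\mu_V$ for $V$-almost every $(x,\xi)$ (here we use that $x_0\in\spt\mu_V$); hence this factor is bounded by $(\diam\spt\mu_V)^2\, \mu_V(\R^3)$. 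Combining and taking square roots gives
\[
|2\mu_V(\R^3) - 3c_0\,\CalV(V,x_0)| \leq 2\,\diam\spt\mu_V\, \sqrt{\mu_V(\R^3)\,\CalH_{c_0}(V)},
\]
and the assumption $\CalH_{c_0}(V)>0$ (together with $\mu_V(\R^3)>0$, which follows from $x_0\in\spt\mu_V$) allows us to divide, yielding~\eqref{eq:diam bound below}.

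There are no serious obstacles here; the only minor technical point is the cutoff needed to legitimately test the first variation against the non-compactly supported field $X(x)=x-x_0$, but this is routine once $\spt\mu_V$ is known to be compact. It is worth noting that the perpendicularity relation~\eqref{eq:perpendicular} is not even needed for this computation, although it is of course built into our standing \Cref{hyp:generalised_mean_curvature}.
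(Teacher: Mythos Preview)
Your proof is correct and follows essentially the same approach as the paper: test the first variation identity against $X(x)=x-x_0$ (after a cutoff) to identify the integral $I$, then estimate $|I|$ by Cauchy--Schwarz and the trivial bound $|x-x_0|\leq \diam\spt\mu_V$. The only cosmetic difference is that the paper writes the Cauchy--Schwarz step as $|I|\leq \int |H-c_0(\star\xi)|\,|x-x_0|\diff V \leq \sqrt{4\CalH_{c_0}(V)\mu_V(\R^3)}\,\diam\spt\mu_V$ rather than bounding $|I|^2$ directly.
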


\begin{proof}
	Using \Cref{hyp:generalised_mean_curvature} for the vector field $X(x)=x-x_0$ (multiplied with a suitable cut-off function away from $\spt\mu_V$), we have
	\begin{align}
		\int_{\R^3}2\diff\mu_V(x)-3c_0\CalV(V,x_0) &= - \int_{\mathbb{G}^{\mathrm{o}}_2(\R^3)}\langle H(x),x-x_0\rangle\diff V(x,\xi)\\
		&\quad +c_0 \int_{\mathbb{G}^{\mathrm{o}}_2(\R^3)}\langle \star\xi,x-x_0\rangle\diff V(x,\xi).
	\end{align}
	Thus, by the Cauchy--Schwarz inequality
	\begin{align}
		\Abs{2\mu_V(\R^3)-3c_0\CalV(V,x_0)}&\leq \int_{\mathbb{G}^{\mathrm{o}}_2(\R^3)}\abs{H(x)-c_0(\star\xi)}\abs{x-x_0}\diff V(x,\xi)\\
		&\leq \sqrt{4\CalH_{c_0}(V)\mu_V(\R^3)}\diam\spt\mu_V. &&\qedhere
	\end{align}
\end{proof}
In the case $c_0=0$, this is just Simon's lower diameter estimate, cf.\ \cite[Lemma 1.1]{SimonWillmore}.
Note that here we did not use the Li--Yau inequality but merely the first variation formula, see \eqref{eq:first_variation} and \eqref{eq:generalised_mean_curvature}. 


\begin{lem}\label{thm:upper_diameter_bound}
	Suppose $V,H,E,\Theta$ satisfy \Cref{hyp:EmuH}, $\theta^2(\mu_V,x)\geq 1$ for $\mu_V$-almost all~$x$, $\spt\mu_V$ is connected, and $c_0\leq0$. If $\CalH_{c_0}(V) < \infty$, $\mu_V(\R^3)<\infty$, $\Theta\in L^1(\CalL^3\llcorner E)$, and
	\begin{equation}\label{eq:concentrated_Theta_volume}
		\int_E\frac{\Theta(x)}{|x-x_0|^2}\diff\CalL^3(x) < \infty 
	\end{equation} 
	for $\mu_V$-almost all $x_0$, then $\spt\mu_V$ is compact and
	\begin{equation}\label{eq:upper_diameter_bound}
		\diam \spt \mu_V \leq C \sqrt{\CalH_{c_0}(V)\Bigr(\mu_V(\R^3) + \frac{2}{3}|c_0|\CalV(V)\Bigl)}
	\end{equation}
	where $C = \frac{9}{2\pi}$ and $\CalV(V) = \int_E\Theta\diff\CalL^3$ is the algebraic volume (see \Cref{prop:algebraic_volume}).
\end{lem}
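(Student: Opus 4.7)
My plan is to derive the diameter bound by combining three ingredients: the Willmore--Helfrich comparison \eqref{eq:bound_Willmore_by_Helfrich}, a Simon-type diameter estimate for varifolds with density bounded below a.e., and the Li--Yau inequality for varifolds with enclosed volume (\Cref{cor:LY finite perimeter}) together with the diameter hypothesis \eqref{eq:diam-cond-enclosing_varifolds}.

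For the compactness of $\spt\mu_V$, I would use that $c_0 \leq 0$ together with $\theta^2(\mu_V,\cdot) \geq 1$ and \Cref{cor:LY finite perimeter} implies $\CalH_{c_0}(V) \geq 4\pi$, so in particular $H \in L^2_{\mathrm{loc}}(\mu_V;\R^3)$ by \Cref{rem:H L2 loc}. The monotonicity formula \Cref{lem:monotonicity}, applied at any $x_0 \in \spt\mu_V$ in the limit $\sigma \to 0$ and with the bending contributions controlled by Cauchy--Schwarz using finiteness of $\CalH_{c_0}(V)$, $|c_0|\CalV(V)$ and $\mu_V(\R^3)$, then gives a uniform lower density bound $\mu_V(B_\rho(x_0)) \geq c\rho^2$ for all $x_0 \in \spt\mu_V$ and $\rho \leq \rho_0$ with a definite $\rho_0>0$. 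Compactness then follows by a Vitali-type covering argument combined with the connectedness of $\spt\mu_V$ and the finiteness of $\mu_V(\R^3)$.

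For the quantitative diameter estimate, I would first invoke a Simon-type bound $(\diam \spt \mu_V)^2 \leq C_S \, \mu_V(\R^3) \int|H|^2 \diff\mu_V$ for compactly supported rectifiable varifolds with connected support and density at least one $\mu_V$-a.e. Applying \eqref{eq:bound_Willmore_by_Helfrich} with the Cauchy--Schwarz splitting $\int |H|^2\diff\mu_V \leq 8\CalH_{c_0}(V) + 2c_0^2\mu_V(\R^3)$ converts this into $(\diam)^2 \leq 8C_S \, \mu_V \CalH_{c_0} + 2C_S c_0^2 \mu_V^2$. The remaining task is to absorb the term $c_0^2\mu_V^2$ into the desired form involving $|c_0|\CalV(V)\CalH_{c_0}(V)$. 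For this I would use that by \eqref{eq:generalized_div_thm}, $\Theta$ is locally constant on $\R^3 \setminus \spt\mu_V$ and must vanish on the unbounded component of the complement, so that $\spt (\CalL^3 \llcorner E)$ lies in the bounded hull of $\spt\mu_V$. By \eqref{eq:diam-cond-enclosing_varifolds}, we then get $|x - x_0| \leq D := \diam \spt \mu_V$ for $x_0 \in \spt\mu_V$ and $x$ in this support, so
\begin{equation*}
\int_E \frac{\Theta(x)}{|x-x_0|^2}\diff\CalL^3(x) \geq \frac{\CalV(V)}{D^2}.
\end{equation*}
Plugging this into \Cref{cor:LY finite perimeter} together with $\theta^2 \geq 1$ yields the crucial auxiliary inequality $|c_0| \CalV(V) \leq D^2 \CalH_{c_0}(V)/2$.

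The final step is to combine the Simon-type estimate with this auxiliary inequality, isolate $D^2$ on one side, and optimize the constants to reach the bound with $C = 9/(2\pi)$. I expect the main obstacle to lie precisely in this bookkeeping of constants: each ingredient (the Simon estimate, the Willmore--Helfrich comparison, and the absorption of $c_0^2\mu_V^2$) introduces a numerical constant, and obtaining the sharp value $9/(2\pi)$ will likely require a careful application of Young's inequality with an optimal parameter. A secondary subtlety is to verify that the auxiliary integrals $\int_E \Theta/|x-x_0|^2 \diff\CalL^3$ appearing in the Li--Yau inequality are finite for $x_0 \in \spt\mu_V$; this is precisely the content of the integrability hypothesis \eqref{eq:concentrated_Theta_volume}, which therefore plays an essential role.
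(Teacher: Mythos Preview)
Your approach has a genuine gap at the absorption step, and it cannot be repaired by bookkeeping of constants. Going through the Willmore energy via \eqref{eq:bound_Willmore_by_Helfrich} and Simon's estimate yields
\[
D^2 \leq C_S^2\,\mu_V(\R^3)\bigl(2\CalH_{c_0}(V) + \tfrac{1}{2}c_0^2\mu_V(\R^3)\bigr),
\]
so the term to be absorbed is $c_0^2\,\mu_V(\R^3)^2$. Your auxiliary inequality from \Cref{cor:LY finite perimeter}, however, bounds $|c_0|\CalV(V)$ from above by a multiple of $D^2\CalH_{c_0}(V)$; it says nothing about $c_0^2\mu_V(\R^3)^2$, and there is no general inequality relating $\mu_V(\R^3)^2$ to $\CalV(V)\CalH_{c_0}(V)$ (think of thin, large-area configurations). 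Indeed, \Cref{rem:upper_diameter_bound} records exactly the bound your route produces,
\[
D \leq C\sqrt{\mu_V(\R^3)\bigl(2\CalH_{c_0}(V) + \tfrac{1}{2}c_0^2\mu_V(\R^3)\bigr)},
\]
and notes that it is \emph{different} from \eqref{eq:upper_diameter_bound}; the latter is useful precisely because the prefactor is $\CalH_{c_0}(V)$ rather than $\mu_V(\R^3)$.

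The paper instead works directly with the Helfrich integrand in the monotonicity formula \Cref{lem:monotonicity}. Letting $\sigma\to0$ and estimating the mixed term by Cauchy--Schwarz gives a local inequality in which the $c_0$-contributions are still the surface integrals of $\langle x-x_0,\star\xi\rangle/|x-x_0|^2$ and $\langle x-x_0,\star\xi\rangle/\rho^2$. These are converted, via the divergence identity \eqref{eq:generalized_div_thm} (this is where \Cref{hyp:EmuH} and the integrability hypothesis \eqref{eq:concentrated_Theta_volume} enter), into integrals of $\Theta$ over $E\cap B_\rho(x_0)$, yielding
\[
\pi \leq \tfrac{3}{4}\CalH_{c_0}(V,B_\rho(x_0)) + \tfrac{3}{2\rho^2}\mu_V(B_\rho(x_0)) + \tfrac{|c_0|}{\rho^2}\bigl(\Theta\CalL^3\llcorner E\bigr)(B_\rho(x_0)).
\]
The right hand side is a single Radon measure evaluated on $B_\rho(x_0)$; Simon's disjoint-ball packing argument along the connected support then gives compactness and the bound \eqref{eq:diam2}, and optimizing in $\rho$ (using \Cref{thm:diameter_bound} to ensure $\rho<D$) produces the constant $9/(2\pi)$. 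The volume term $|c_0|\CalV(V)$ thus appears structurally, not through any after-the-fact absorption.
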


\begin{remark}\label{rem:upper_diameter_bound}
	For $c_0 = 0$, we recover the diameter bound in terms of area and Willmore energy by Simon~\cite[Lemma 1.1]{SimonWillmore}:
	\begin{equation}\label{eq:diam_bound_Willmore}
		\diam \spt \mu_V \leq C \sqrt{\CalW(V)\mu_V(\R^3)}.
	\end{equation}
	This inequality holds true for all $2$-varifolds in $\R^3$ with generalized perpendicular mean curvature, finite Willmore energy, and whose weight measure is finite and has connected support (see \cite[Theorem~1.5]{Scharrer2}). Hence, by~\eqref{eq:bound_Willmore_by_Helfrich} we obtain
	\begin{equation}
		\diam \spt \mu_V \leq C \sqrt{\mu_V(\R^3)\Bigl(2\CalH_{c_0}(V) + \frac{1}{2}c_0^2\mu_V(\R^3)\Bigr)}
	\end{equation}
	for all $V$ satisfying \Cref{hyp:generalised_mean_curvature} with $\CalW(V)<\infty$, $\mu_V(\R^3)<\infty$, and such that $\spt\mu_V$ is connected. 
	In view of \Cref{rem:regularity}, our diameter bound \eqref{eq:upper_diameter_bound} will be particularly useful since it has the Helfrich functional rather than the weight measure as prefactor.
\end{remark}

\begin{proof}[Proof of \Cref{thm:upper_diameter_bound}]
	We will follow the proof of \cite[Lemma~1.1]{SimonWillmore}. Suppose $\spt\mu_V\neq\varnothing$ (otherwise the statement is trivial), let $x_0\in\spt\mu_V$ and define the Radon measure
	\begin{equation}
		\CalH_{c_0}(V,B) \defeq \frac{1}{4}\int_{B\times \mathbb{G}^{\mathrm{o}}(3,2)}|H(x) - c_0(\star\xi)|^2\diff V(x,\xi) \qquad \text{for all Borel sets $B$ in $\R^3$}.
	\end{equation} 
	Using the Cauchy--Schwarz inequality as in~\eqref{eq:gamma 4 to 0} and Young's inequality, we estimate
	\begin{align} 
		\frac{1}{2\rho^2}\int_{B_\rho(x_0)\times\mathbb G^\mathrm{o}(3,2)}\left|\langle x-x_0,H(x) - c_0(\star\xi)\rangle\right| \diff V(x,\xi) 
		\leq \frac{\mu_V(B_\rho (x_0))}{2\rho^2} + \frac{1}{2}\CalH_{c_0}(V,B_\rho(x_0))
	\end{align}
	for all $\rho>0$. Hence, since $\theta^2(\mu_V,x_0)\geq 1$ by \cite[Theorem~3.6]{Scharrer2} in combination with \Cref{rem:H L2 loc}, we can let $\sigma$ go to zero in \Cref{lem:monotonicity} and use \eqref{eq:gamma 4 to 0}, \eqref{eq:gamma 5 to 0} to infer
	\begin{align}
		\pi &\leq \frac{3}{4}\CalH_{c_0}(V,B_\rho(x_0))+\frac{3\mu_V(B_\rho(x_0))}{2\rho^2} - \frac{c_0}{2}\int_{B_\rho(x_0)\times\mathbb G^\mathrm{o}(3,2)}\frac{\langle x-x_0,\star\xi\rangle}{|x-x_0|^2} \diff V(x,\xi) \\ \label{eq:woGaussGreen}
	 	&\quad + \frac{c_0}{2\rho^2}\int_{B_\rho(x_0)\times\mathbb G^\mathrm{o}(3,2)}\langle x-x_0,\star\xi\rangle \diff V(x,\xi).
	\end{align}

		
	Multiplying \eqref{eq:perimeter proof 1} with $\frac{c_0}{2}$ and using $c_0\leq 0$, for any $0<\sigma<\rho$ we have
	\begin{align}
			&- \frac{c_0}{2}\int_{(B_\rho\setminus B_\sigma)(x_0)\times\mathbb G^\mathrm{o}(3,2)}\frac{\langle x-x_0,\star\xi\rangle}{|x-x_0|^2} \diff V(x,\xi) + \frac{c_0}{2\rho^2}\int_{B_\rho(x_0)\times\mathbb G^\mathrm{o}(3,2)}\langle x-x_0,\star\xi\rangle \diff V(x,\xi) \\
		&\quad = \frac{3|c_0|}{2\rho^2}\int_{E\cap B_\rho(x_0)} \Theta\diff\CalL^3 -\frac{\abs{c_0}}{2} \int_{E\cap B_\rho(x_0)\setminus B_\sigma(x_0)}\frac{\Theta(x)}{\abs{x-x_0}^2}\diff\CalL^3(x)\\
		&\qquad - \frac{3\abs{c_0}}{2\sigma^2}\int_{E\cap B_\sigma(x_0)}\Theta\diff\CalL^3 - \frac{\abs{c_0}}{2\sigma^2}\int_{B_\sigma(x_0)\times \mathbb{G}^{\mathrm{o}}(3,2)}\langle x-x_0, \star\xi\rangle\diff V(x,\xi). \label{eq:GGbdr4}
	\end{align}
	Sending $\sigma\to 0+$ and using \eqref{eq:concentrated_Theta_volume}, \Cref{lem:subcrit_integral} and \Cref{rem:subcrit_integral}\eqref{item:subcrit_loc}, we find that
	\begin{align}
		&-\frac{c_0}{2}\int_{B_\rho(x_0)\times\mathbb G^\mathrm{o}(3,2)}\frac{\langle x-x_0,\star\xi\rangle}{|x-x_0|^2} \diff V(x,\xi) + \frac{c_0}{2\rho^2}\int_{B_\rho(x_0)\times\mathbb G^\mathrm{o}(3,2)}\langle x-x_0,\star\xi\rangle \diff V(x,\xi) \\
		&= 
		\frac{|c_0|}{2}\int_{E\cap B_\rho(x_0)}\left(\frac{3}{\rho^2} - \frac{1}{|x-x_0|^2}\right) \Theta(x) \diff \CalL^3(x) \leq \frac{|c_0|}{\rho^2}\int_{E\cap B_\rho(x_0)}\Theta(x)\CalL^3(x). \label{eq:GGbdr5}
	\end{align}
	Combining \eqref{eq:woGaussGreen} and \eqref{eq:GGbdr5}, we thus obtain

	\begin{equation}\label{eq:monotonicity_inequality_rho}
			\pi \leq \frac{3}{4}\CalH_{c_0}(V,B_\rho(x_0))+\frac{3}{2\rho^2} \mu_V(B_\rho(x_0)) + \frac{|c_0|}{\rho^2}\left(\Theta\CalL^3 \llcorner E\right)(B_\rho(x_0)).
	\end{equation} 
	The right hand side of this inequality corresponds to the Radon measure
	\begin{equation}
		\mu_{c_0,V,E} := \frac{3}{4}\CalH_{c_0}(V,\cdot)+\frac{3}{2\rho^2} \mu_V + \frac{|c_0|}{\rho^2}\left(\Theta\CalL^3\llcorner E\right).
	\end{equation}
	The set of $x_0\in\R^3$ that satisfy \eqref{eq:concentrated_Theta_volume} is dense in $\spt\mu_V$. Hence, given any $x_0\in\spt\mu_V$ and any $\varepsilon>0$ we can always find $x_1\in\spt\mu_V$ which satisfies \eqref{eq:monotonicity_inequality_rho} such that $B_\rho(x_1)\subset B_{\rho + \varepsilon}(x_0)$. Thus, letting $\varepsilon \to 0+$, we see that \eqref{eq:monotonicity_inequality_rho} remains valid for all $x_0\in\spt\mu_V$.
	By \Cref{rem:subcrit_integral}\eqref{item:subcrit_loc}, we see $\mu_V(N) = 0$ whenever $N$ is finite and consequently $\mu_{c_0,V,E}(N) = 0$ whenever $N$ is finite. Let $d \defeq \diam \spt \mu_V$ (possibly $d=\infty$), $0<\rho<d$, and $N$ be a positive integer such that $2(N-1)\rho < d$. By the connectedness of $\spt\mu_V$, we can choose points $x_0,\ldots,x_{N-1}\in\spt\mu_V$ such that $x_i \in \partial B_{2i\rho}(x_0)$ for $i=1,\ldots,N-1$. The balls $B_\rho(x_0),\ldots,B_\rho(x_{N-1})$ intersect in at most $N-1$ points. Applying the inequality~\eqref{eq:monotonicity_inequality_rho} for each $x_i$ and summing over $i$ yields
	\begin{equation}\label{eq:diam1}
		N\pi \leq \mu_{c_0,V,E}(\R^3).
	\end{equation}
	Since the right hand side is finite, it follows that the diameter $d$ is finite. Hence, we can choose $N$ such that $2(N-1)\rho<d\leq 2N\rho$. Then~\eqref{eq:diam1} and \Cref{prop:algebraic_volume} imply 
	\begin{equation}\label{eq:diam2}
		d \leq \frac{3}{2\pi}\left(\rho \CalH_{c_0}(V) + \frac{2}{\rho}\mu_V(\R^3) + \frac{4|c_0|}{3\rho}\CalV(V)\right).
	\end{equation} 
	Now, in view of \Cref{thm:Willmore-inequality}, we may take
	\begin{equation}
		\rho = \sqrt{\frac{2\mu_V(\R^3) + \frac{4}{3}|c_0|\CalV(V)}{2\CalH_{c_0}(V)}} = \sqrt{\frac{\mu_V(\R^3) + \frac{2}{3}|c_0|\CalV(V)}{\CalH_{c_0}(V)}}.
	\end{equation}
	Then, by \Cref{thm:diameter_bound}, $\rho < d$ and thus, \eqref{eq:diam2} becomes
	\begin{equation}
		d \leq \frac{9}{2\pi}\sqrt{\CalH_{c_0}(V)\Bigr(\mu_V(\R^3) + \frac{2}{3}|c_0|\CalV(V)\Bigl)}
	\end{equation}
	which concludes the proof.
\end{proof}
%

\subsection{Regularity and embeddedness of Canham--Helfrich minimizers}\label{sec:regularity}

We start with a survey of the variational setting in~\cite{RiviereCrelle} 
(see also \cite{KuwertLiCAG,KellerMondinoRiviere,MondinoScharrer1}). This includes the definition of \emph{Lipschitz immersions}. Then we introduce the space $\mathcal Q_\Sigma$ of \emph{Lipschitz quasi-embeddings} which consists of those Lipschitz immersions whose associated varifolds are varifolds with enclosed volume, cf.\ \Cref{hyp:EmuH}.
We show that each injective Lipschitz immersion (in particular each smooth embedding) is a Lipschitz quasi-embedding (see \Cref{lem:weak imm finite perim}). Moreover, we prove a weak closure \Cref{lem:weak_closure} which leads to our main regularity \Cref{thm:regularity}.

Let $\Sigma$ be a closed oriented surface 
and let $g_0$ be a reference Riemannian metric on $\Sigma$.
A map $f\colon \Sigma \to \mathbb R^3$ is called \emph{weak branched immersion} if and only if 
\begin{equation}\label{eq:Lip}  
	f \in W^{1, \infty}(\Sigma; \mathbb R^3) \cap W^{2,2}(\Sigma; \mathbb R^3), 
\end{equation} 
there exists a constant $1<C<\infty$ such that
\begin{equation}\label{eq:frame_condition}
	C^{-1}|\mathrm df|^2_{g_0}\leq |\mathrm df\wedge\mathrm df|_{g_0}\leq	C|\mathrm df|^2_{g_0}
\end{equation}
where in local coordinates
\begin{equation}
	\mathrm df\wedge\mathrm df\defeq (\mathrm dx^1\wedge \mathrm dx^2)\partial_{x^1}f\wedge\partial_{x^2}f, 
\end{equation}
there exist finitely many so called \emph{branch points} $b_1, \ldots, b_N \in \Sigma$ such that the \emph{conformal factor} satisfies
\begin{equation}\label{eq:conformal_factor}
	\log |\mathrm df|_{g_0} \in L^\infty_\mathrm{loc}(\Sigma \setminus \{b_1, \ldots, b_N\}),
\end{equation}
and the Gauss map $n$ defined as in \eqref{eq:def normal}
satisfies 
\begin{equation}\label{eq:finie_total_curvature}
	n \in W^{1,2}(\Sigma; \mathbb R^3).
\end{equation}
If in addition 
\begin{equation}\label{eq:conformal}
|\partial_{x^1} f| = |\partial_{x^2} f| \quad \text{and}\quad 
\langle \partial_{x^1} f, \partial_{x^2} f\rangle = 0
\end{equation}
for all conformal charts $x$ of $(\Sigma,g_0)$, then $f$ is called $\emph{conformal}$. A chart $x = (x^1,x^2)$ that satisfies \eqref{eq:conformal} is referred to as \emph{isothermal coordinates}.
Notice that \eqref{eq:conformal} implies \eqref{eq:frame_condition} and, since $\Sigma$ is closed, the conditions~\eqref{eq:Lip}--\eqref{eq:finie_total_curvature} do not depend on the choice of the Riemannian metric $g_0$. The space of weak branched immersions is denoted by $\mathcal F_{\Sigma}$.
The subspace $\CalE_\Sigma$ of \emph{Lipschitz immersions} is defined to consist of all $f\in\CalF_{\Sigma}$ such that there exists a constant $0<C<\infty$ with
\begin{equation} \label{eq:Lipschitz_immersion}
	|\mathrm df\wedge\mathrm df|_{g_0} \geq C.
\end{equation}
Notice that \eqref{eq:Lip} and \eqref{eq:Lipschitz_immersion} imply $\log |\mathrm df|_{g_0}\in L^\infty(\Sigma)$. 
 
Let $f\in\mathcal F_\Sigma$. Analogously to \Cref{ex:varifold_of_immersion}, we infer a (possibly degenerated) $L^\infty$-metric $g\defeq f^*\langle\cdot,\cdot\rangle$, the induced Radon measure $\mu$ over $\Sigma$, the oriented varifold $V\defeq (f,\star n)_\#\mu$, the classical mean curvature $H_f$ of $f$ (in the Sobolev sense), and the induced generalized mean curvature $H$. 
If $f$ is conformal, we have by \cite[Theorem~3.1]{KuwertLiCAG} that
\begin{equation}\label{eq:density_weka_immersion}
	\mathcal H^0(f^{-1}\{x\}) = \theta^2(\mu_V,x) \qquad\text{for all $x\in\R^3$}.
\end{equation} 
In view of \cite[Equation~(2.11)]{MondinoScharrer1} there holds
\begin{equation}\label{eq:first_variation_weak_immersion}
	\delta V(X) = - \int_{\R^3}\langle X,H\rangle\diff \mu_V 
\end{equation} 
for all $X\in C^1_{c}(\R^3;\R^3)$. 
Moreover, by the definition of $H$ and \eqref{eq:finie_total_curvature} we have that
\begin{equation}
	\int_{\R^3}|H|^2\diff\mu_V \leq \int_{\Sigma} |H_f|^2\diff\mu < \infty.
\end{equation}
In particular, $H \in L^2(\mu_V;\R^3)$ and $V,H$ satisfy \Cref{hyp:generalised_mean_curvature}. Now, we can combine \cite[Section~6.1, Theorem~4]{EvansGariepy} and \cite[Theorem~4.1]{SchatzleJDG} to infer
\begin{equation}
	H(f(p)) = H_f(p) \qquad \text{for $\mu$-almost all $p\in\Sigma$}.
\end{equation}
As in \eqref{eq:H_energy_varifold_vs_immersion}, it follows that for all $c_0 \in \R$ we have
\begin{equation}\label{eq:Helfrich_energies}
	\CalH_{c_0}(V) = \frac{1}{4}\int_{\R^3}|H(x) - c_0(\star\xi)|^2\diff V(x,\xi) = \frac{1}{4}\int_{\Sigma}|H_f - c_0n|^2\diff\mu = \CalH_{c_0}(f).
\end{equation}

The space $\mathcal Q_\Sigma$ is defined to consist of all $f\in\CalE_\Sigma$ such that there exists an $\CalL^3$-measurable set $E$ with
\begin{equation}\label{eq:diameter_condition}
\diam \spt (\CalL^3\llcorner E) \leq \diam f[\Sigma]
\end{equation}
and
\begin{align}\label{eq:gauss-green 2}
	\int_E \divergence X\diff \CalL^3 =- \int_{\Sigma}\langle X\circ f, n\rangle\diff \mu
\end{align}
for any Lipschitz map $X\colon \R^3\to\R^3$ with compact support, i.e.\ the triple $E,V,H$ satisfies \Cref{hyp:EmuH} for $\Theta\equiv1$.
The divergence theorem for sets of finite perimeter~\eqref{eq:div-thm-finite-perimeter}, Equation~\eqref{eq:gauss-green 2}, and the area formula (see \cite[3.2.22(3)]{Federer}) imply
\begin{equation}\label{eq:comparing_normal_vectors}
	n_E(x) = \sum_{p\in f^{-1}\{x\}} n(p) \qquad \text{for $\CalH^2$-almost all $x\in\R^3$.}
\end{equation}
Notice that $x\notin \spt(\CalH^2\llcorner\partial_*E)$ does not imply $f^{-1}\{x\} = \varnothing$. In particular, in view of Figures \ref{fig:pillow} and \ref{fig:current}, the two oriented varifolds associated with $\partial_*E$ and~$f$ do not necessarily coincide. 
Hence, by \Cref{prop:cvol existence}, \Cref{prop:algebraic_volume}, \Cref{lem:cvol-Alexandrov} and \Cref{rem:cvol-Alexandrov} there holds
\begin{equation}\label{eq:positive_concentrated_volume}
\CalV(V,x_0) = \CalV(f) = \CalL^3(E),\qquad \CalV_c(V,x_0) = \CalV_c(f,x_0) = \int_E\frac{1}{|x-x_0|^2}\diff \CalL^3(x)
\end{equation}
for all $x_0\in\R^3$. If $\CalH^0(f^{-1}\{x\}) \leq 1$ for $\CalH^2$-almost all $x\in\R^3$ then \eqref{eq:comparing_normal_vectors} implies $n_E \circ f = n$, $\partial_*E = f[\Sigma]$ up to a set of $\CalH^2$-measure zero, the two oriented varifolds associated with $\partial_*E$ and $f$ coincide, and since $\spt(\CalH^2\llcorner \partial_*E) \subset \spt(\CalL^3\llcorner E)$, equality holds in \eqref{eq:diameter_condition}. Inspired by the terminology in the smooth case of~\cite{BellettiniWickramasekera}, we refer to $\mathcal Q_\Sigma$ as the space of \emph{Lipschitz quasi-embeddings}. By definition, there holds
\begin{equation}
	\mathcal Q_\Sigma \subset \CalE_\Sigma \subset \CalF_{\Sigma}.
\end{equation}

\begin{lem}\label{lem:weak imm finite perim}
	Let $\Sigma$ be a closed oriented surface and $f\in \mathcal{E}_\Sigma$ be injective. Then, possibly after changing the orientation of $\Sigma$, there exists a connected open bounded set $U\subset \R^3$ of finite perimeter such that $\partial_*U = f[\Sigma]$ up a to a set of $\CalH^2$-measure zero, and
	\begin{equation}\label{eq:gauss-green 3}
		\int_U \divergence X\diff \CalL^3 =- \int_{\Sigma}\langle X\circ f, n\rangle\diff \mu
	\end{equation}
	for any Lipschitz map $X\colon \R^3\to\R^3$.
	In particular,  $f\in \mathcal Q_\Sigma$ and $\mathcal Q_\Sigma$ contains all 
	smooth embeddings $f\colon \Sigma\to\R^3$ (up to orientation). However, not all $f\in\mathcal Q_\Sigma$ are injective. 
\end{lem}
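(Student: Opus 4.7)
The plan is to take $U$ to be the bounded connected component of $\R^3\setminus f[\Sigma]$. First, the Lipschitz immersion condition \eqref{eq:Lipschitz_immersion} together with \eqref{eq:Lip} makes $f$ locally bi-Lipschitz; combined with global injectivity and the compactness of $\Sigma$, this turns $f$ into a bi-Lipschitz homeomorphism onto its image, so that $f[\Sigma]$ is a closed topological $2$-submanifold of $\R^3$ admitting a bi-Lipschitz parametrization. Assuming $\Sigma$ connected (the general case being analogous componentwise), the Jordan--Brouwer separation theorem applied to this topologically embedded closed orientable surface yields that $\R^3\setminus f[\Sigma]$ has exactly two components, exactly one of which is bounded; I take this to be $U$, which is open, bounded, and connected.

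To see that $U$ has finite perimeter, I would apply Federer's criterion. Since $f$ is Lipschitz and $\Sigma$ is compact, the Lipschitz area inequality gives $\CalH^2(f[\Sigma])<\infty$, and hence $\CalH^2(K\cap\partial_* U)\leq\CalH^2(f[\Sigma])<\infty$ for every compact $K\subset\R^3$. For the identification $\partial_* U=f[\Sigma]$ up to $\CalH^2$-null sets, the inclusion $\partial_*U\subset\partial U\subset f[\Sigma]$ is clear. For the converse, at $\CalH^2$-almost every $y=f(p)\in f[\Sigma]$ the map $f$ is differentiable at $p$ with rank $2$ and $f[\Sigma]$ admits a classical tangent plane at $y$; by the local bi-Lipschitz structure, the two sides of this tangent plane correspond to the two sides of $f[\Sigma]$ near $y$, which by connectedness forces one side into $U$ and the other outside $\overline U$, so that $y\in\partial_* U$.

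Next, the divergence theorem \eqref{eq:div-thm-finite-perimeter} for sets of finite perimeter combined with the area formula for the injective Lipschitz map $f$ gives
\begin{equation}
	\int_U\divergence X\diff\CalL^3=-\int_{\partial_* U}\langle X,n_U\rangle\diff\CalH^2=-\int_\Sigma\langle X\circ f,n_U\circ f\rangle\diff\mu.
\end{equation}
At $\CalH^2$-almost every $y=f(p)\in f[\Sigma]$, both $n_U(y)$ and $n(p)$ are unit normals to the common tangent plane, so $n_U\circ f=\pm n$ $\mu$-almost everywhere; by continuity of $n$ the sign is constant on $\Sigma$, and after possibly reversing the orientation on $\Sigma$ we obtain $n_U\circ f=n$ $\mu$-almost everywhere, yielding \eqref{eq:gauss-green 3}. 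A standard separating hyperplane argument shows $\overline U\subset\mathrm{conv}(\partial U)\subset\mathrm{conv}(f[\Sigma])$, so $\diam\spt(\CalL^3\llcorner U)\leq\diam f[\Sigma]$ and \eqref{eq:diameter_condition} holds, giving $f\in\mathcal Q_\Sigma$. The last assertion that $\mathcal Q_\Sigma$ contains non-injective maps follows from a concrete example such as the profile depicted in Figure~\ref{fig:pillow}: the corresponding immersion is not injective (multiplicity points arise in the overlap) yet bounds a set of finite perimeter $E$ with $\Theta\equiv 1$ and satisfies \eqref{eq:gauss-green 2}.

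The main obstacle is the application of Jordan--Brouwer in the Lipschitz category together with the $\CalH^2$-almost-everywhere local one-sidedness of $U$ along $f[\Sigma]$; both steps reduce to the local bi-Lipschitz structure guaranteed by \eqref{eq:Lipschitz_immersion}, but require careful justification, particularly the identification of the approximate tangent plane of $\partial_* U$ with the tangent plane of the bi-Lipschitz surface $f[\Sigma]$.
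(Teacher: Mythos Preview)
Your outline matches the paper's for the first three steps (Jordan--Brouwer, Federer's criterion, and the identification $\partial_*U=f[\Sigma]$ up to an $\CalH^2$-null set), but diverges at the orientation step, and this is where your argument has a genuine gap. You assert that $n_U\circ f=\pm n$ $\mu$-a.e.\ and then conclude ``by continuity of $n$ the sign is constant''. Continuity of $n$ alone does not force the sign $\sigma(p)=\langle n_U(f(p)),n(p)\rangle$ to be constant: you also need that $n_U\circ f$ agrees $\mu$-a.e.\ with a \emph{continuous} normal field along $f[\Sigma]$. You gesture at this with the local one-sidedness remark, but in the Lipschitz category this requires an argument (e.g.\ showing that for every $y\in f[\Sigma]$ the set $B_r(y)\setminus f[\Sigma]$ has exactly two components for small $r$, which then vary continuously with $y$ by the homeomorphism $f$). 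You correctly flag this as the main obstacle, but the proof as written does not close it.

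The paper bypasses this local analysis entirely by a current-theoretic argument: it pushes forward the fundamental $2$-cycle of $\Sigma$ by $f$ to obtain an integral $2$-current $R$ with $\partial R=0$, invokes the decomposition \cite[4.5.17,\,4.5.6]{Federer} to write $R=\sum_j R_j$ with $R_j$ the current of $\partial_*E_j$ for a nested family $E_j$, and then uses the indecomposability of the open connected set $U$ (and of $\R^3\setminus\bar U$) from \cite{ACMM} to force each $E_j$ to be one of $\R^3$, $U$, $\R^3\setminus\bar U$, or $\varnothing$. Injectivity of $f$ (hence $\theta^2(\mu_V,\cdot)\leq 1$) then leaves exactly one nontrivial level set $E_{j_0}=U$, giving \eqref{eq:gauss-green 3} directly with the correct sign after possibly reversing orientation. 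This is heavier machinery, but it converts the delicate local ``same-side'' question into a global homological statement that is handled cleanly by the constancy/decomposition theorem.
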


\begin{remark}\label{rem:weak_imm_finite_perim_pos_vol}
	Notice that changing the orientation on $\Sigma$ is equivalent to changing the sign of the (nonzero) algebraic volume. Hence, if additionally $\CalV(f)>0$,
	no change of orientation is necessary in \Cref{lem:weak imm finite perim}.
\end{remark}

\begin{proof}[Proof of \Cref{lem:weak imm finite perim}]
	We may assume that $(\Sigma,g_0)\subset\R^3$ is embedded and $g_0$ is the metric induced by the inclusion map. Since $f$ is injective, we can apply the Jordan--Brouwer separation theorem \cite{Brouwer} to obtain a connected open bounded set $U\subset\R^3$ such that $\partial U = f[\Sigma]$ and $\R^3\setminus \bar U$ is connected. 
	Since $\partial_*U\subset \partial U = f[\Sigma]$ and $\CalH^2(f[\Sigma])<\infty$, Federer's criterion implies that $U$ is a set of finite perimeter. Moreover, for $p\in\Sigma$, one can show that if $f$ is differentiable at $p$, then $f(p)\in\partial_*U$. Hence, by Rademacher's theorem, the sets $\partial_*U$ and $f[\Sigma]$ are $\CalH^2$-almost equal. We still need to show that 
	\begin{equation}
		\int_{\partial_*U}\langle X,n_U\rangle\diff\CalH^2 = \int_{\Sigma}\langle X\circ f,n\rangle\diff \mu 
	\end{equation}
	for all Lipschitz maps $X\colon\R^3\to\R^3$, where $n_U$ is the measure theoretic inner unit normal of $U$ (see \Cref{sec:finiteperimeter}), and $n$ is the Gauss map of $f$, cf.\ \eqref{eq:def normal}. Let $\nu$ be the unit normal induced by the orientation of $\Sigma\subset \R^3$. We define the $2$-current $T$ on $\R^3$ by
	\begin{equation}
		T(\omega) \defeq  - \int_\Sigma\omega_p(\star\nu(p))\diff \CalH^2(p)
	\end{equation}
	for all differential forms $\omega$ of degree $2$ on $\R^3$. Since $\Sigma$ is closed, we have
	\begin{equation}\label{eq:Stokes}
		\partial T = 0
	\end{equation} 
	(see for instance \cite[4.1.31(1)]{Federer}).
	Given any positive chart $x$ of $\Sigma$, there holds
	\begin{equation}
		\nu = \frac{\partial_{x^1}\times\partial_{x^2}}{|\partial_{x^1}\times\partial_{x^2}|},\qquad ({\textstyle\bigwedge}_2\mathrm df)(\star\nu) =  \frac{|\partial_{x^1}f\wedge\partial_{x^2}f|}{|\partial_{x^1}\times\partial_{x^2}|}(\star n)
	\end{equation}
	where for $\CalH^2$-almost all $p\in\Sigma$, the linear map ${\textstyle\bigwedge}_2\mathrm df_p\colon {\textstyle\bigwedge}_2 T_p\Sigma \to {\textstyle\bigwedge}_2 \mathrm df_p[T_p\Sigma]$ is defined as in \cite[1.3.1]{Federer}. Recalling that in any local chart $x$, the area elements of the immersion $f$ and the inclusion $\Sigma\subset \R^3$ are given by $|\partial_{x^1}f\wedge\partial_{x^2}f|$ and $|\partial_{x^1}\times\partial_{x^2}|$, respectively, we have by \cite[4.1.30]{Federer} that
	\begin{equation}
		R(\omega)\defeq (f_\#T)(\omega) = -\int_{\Sigma}\omega_{f(p)}(\star n(p))\diff \mu(p)
	\end{equation}
	for all differential forms $\omega$ of degree $2$ on $\R^3$. Thus, by \cite[4.1.14]{Federer} and \eqref{eq:Stokes}
	\begin{equation}
		\partial R = \partial (f_\#T) = f_\#(\partial T) = 0.
	\end{equation}
	Therefore, we can combine \cite[4.5.17]{Federer} and \cite[4.5.6]{Federer} to deduce the existence of sets of finite perimeter $E_j\subset E_{j-1}\subset \R^3$, $j\in\Z$ such that 
	\begin{equation}\label{eq:constancy-thm}
		R = \sum_{j\in\Z} R_j,\qquad \mu_V = \sum_{j\in\Z}(\CalH^2\llcorner \partial_*E_j)
	\end{equation}
	where
	\begin{equation}
		R_j(\omega)\defeq - \int_{\partial_*E_j}\omega_x(\star n_{E_j}(x))\diff\CalH^2(x)
	\end{equation}
	are the currents induced by $\partial_*E_j$. Since $U$ is open and connected, we see from \cite[Proposition~2]{ACMM} that $U$ is indecomposable. Given any set of finite perimeter $E\subset \R^3$ with $\partial_*E \subset \partial_*U$ up to a set of $\CalH^2$-measure zero, we see that $\partial_*(U\cap E) \subset \partial_*U$ up to a set of $\CalH^2$-measure zero and thus, by \cite[Proposition~4]{ACMM}, either $\CalL^3(U \cap E) = 0$ or $\CalL^3(U\setminus E)=0$. The same holds true for $U$ replaced by $\R^3\setminus \bar U$. By~\eqref{eq:constancy-thm} we have for all $j\in\Z$ that $\partial_*E_j\subset \spt\mu_V = \partial_*U$ up to a set of $\CalH^2$-measure zero and therefore either $E_j=\R^3$ or $E_j = U$ or $E_j = \R^3\setminus \bar U$ or $E_j = \varnothing$ up to a set of $\CalL^3$-measure zero. Since $f$ is injective, we have that $\theta^2(\mu_V,\cdot)\leq1$. We thus deduce the existence of $j_0\in \Z$ such that (up to a set of $\CalL^3$-measure zero and possibly after changing the orientation on $\Sigma$) 
	\begin{equation}
		E_j = 
		\begin{cases}
			\R^3 &\text{for $j<j_0$},\\
			U &\text{for $j=j_0$}, \\
			\varnothing &\text{for $j>j_0$}.
		\end{cases}
	\end{equation}
	In particular, $R=R_{j_0}$ and~\eqref{eq:gauss-green 3} follows. To see that not all $f\in\mathcal Q_\Sigma$ are injective, one may consider surfaces like in Figures \ref{fig:sausage}, \ref{fig:two_tears}, and \ref{fig:pillow}.
\end{proof}


In the following, we abbreviate $\CalF\defeq\CalF_{\mathbb S^2}$, 
$\CalE\defeq\CalE_{\mathbb S^2}$, and $\mathcal Q\defeq\mathcal Q_{\mathbb S^2}$.

\begin{lem}\label{lem:weak_closure}
	Suppose $f_k$ is a sequence in $\mathcal Q$, $0\in f_k[\S^2]$ for all $k\in\N$, $c_0\in\R$,
	\begin{equation}\label{eq:lem:uniform_area_bound}
		A_0\defeq\sup_{k\in\N} \CalA(f_k)<\infty, \qquad \inf_{k\in\N}\diam f_k[\S^2] >0,
	\end{equation}	
	and  
	\begin{equation}\label{eq:injectivity_condition}
		\begin{cases}
			\liminf_{k\to\infty}\bigl(\CalH_{c_0}(f_k) +2c_0\inf_{x\in f_k[\S^2]}\CalV_c(f_k,x)\bigr) < 8\pi & \text{if $c_0<0$},\\
			\liminf_{k\to\infty}\bigl(\CalH_{c_0}(f_k) +2c_0\sup_{x\in f_k[\S^2]}\CalV_c(f_k,x)\bigr) < 8\pi & \text{if $c_0\geq0$}.
		\end{cases}
	\end{equation}
	Then, after passing to a subsequence, there exists $f\in\mathcal Q$ injective such that
	\begin{equation} \label{eq:convergence_associated_varifolds}
		V_k\to V\qquad\text{in $\mathbb V_2^\mathrm{o}(\R^3)$ as $k\to\infty$},
	\end{equation}	
	where $V_k,V$ are the oriented $2$-varifolds in $\R^3$ associated with $f_k,f$ (cf.\ \Cref{ex:varifold_of_immersion}) and
	\begin{gather}\label{eq:lem:lsc}
		\CalH_{c_0}(f)\leq \liminf_{k\to\infty}\CalH_{c_0}(f_k). 
	\end{gather}
\end{lem}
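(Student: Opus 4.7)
The plan is to combine the compactness theory for weak branched immersions of $\S^2$ with the Li--Yau inequality \Cref{cor:LY varif area dec} and the convergence of the concentrated volume \Cref{lem:varifold_convergence} in order to extract an injective limit, and then use $L^1$-compactness for sets of finite perimeter to recover the enclosed-volume structure.

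I would begin by establishing uniform bounds. From \eqref{eq:injectivity_condition} together with the area bound in \eqref{eq:lem:uniform_area_bound}, the subcritical estimate \Cref{lem:subcrit_integral} applied to $\CalV_c(f_k,\cdot)$, and the diameter estimate \Cref{thm:upper_diameter_bound}, one can control $\sup_k \CalH_{c_0}(f_k) < \infty$; combined with \eqref{eq:bound_Willmore_by_Helfrich} this yields $\sup_k \CalW(V_k) < \infty$. Since $0 \in f_k[\S^2]$, all $V_k$ are supported in a common ball $B_R(0)$. After precomposing each $f_k$ with a suitable M\"obius transformation of $\S^2$ (which leaves $f_k \in \mathcal Q$) and extracting a subsequence, the compactness theory for weak branched conformal immersions from \cite{RiviereCrelle,KuwertLiCAG,MondinoScharrer1} produces a limit $f \in \CalF$ with $f_k \rightharpoonup f$ weakly in $W^{2,2}$ and associated varifolds $V_k \to V$ in $\mathbb V_2^\mathrm{o}(\R^3)$, where $V$ is the varifold associated with $f$ via \Cref{ex:varifold_of_immersion}. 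Standard lower semicontinuity then yields \eqref{eq:lem:lsc}.

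Injectivity of $f$ comes from the Li--Yau inequality. By \Cref{lem:varifold_convergence} and the uniform H\"older estimate \Cref{lem:continuity}, $\CalV_c(V_k,\cdot) \to \CalV_c(V,\cdot)$ uniformly on compact subsets of $\R^3$. For any $x_0 \in \spt\mu_V = f[\S^2]$ one can pick $x_k \in f_k[\S^2]$ with $x_k \to x_0$; bounding $\inf$ (resp.\ $\sup$) by $\CalV_c(V_k,x_k)$ depending on the sign of $c_0$, and using the lower semicontinuity of $\CalH_{c_0}$, we pass to the limit in \eqref{eq:injectivity_condition} to obtain
\begin{equation}
\CalH_{c_0}(V) + 2c_0 \CalV_c(V,x_0) < 8\pi \qquad \text{for every } x_0 \in \spt\mu_V.
\end{equation}
Because $\spt\mu_V$ is compact, $\theta^{*2}(\mu_V,\infty) = 0$, and \Cref{cor:LY varif area dec} forces $\theta^2(\mu_V,x_0) < 2$, hence $\theta^2(\mu_V,x_0) \leq 1$ as it is a positive integer on the support. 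By \eqref{eq:density_weka_immersion} this means $\CalH^0(f^{-1}\{x\}) \leq 1$ for every $x$, so $f$ is injective. The same strict $8\pi$ threshold rules out bubble-tree concentration in the compactness step and hence ensures $f \in \CalE$ with no mass lost.

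Finally, to conclude $f \in \mathcal Q$, let $E_k$ be the sets associated with $f_k$. From $\CalH^2(\partial_*E_k) \leq \CalA(f_k) \leq A_0$ and $E_k \subset B_R(0)$ (using \eqref{eq:diameter_condition} together with the uniform diameter bound), the $L^1$-compactness of sets of finite perimeter yields a subsequence with $E_k \to E$ in $L^1$. Passing to the limit in \eqref{eq:gauss-green 2} --- the left-hand side converges by $L^1$-convergence, while the right-hand side converges by the varifold convergence applied to the continuous compactly supported test map $(x,\xi) \mapsto \langle X(x), \star\xi\rangle$ --- gives \eqref{eq:gauss-green 2} for $f$ and $E$, and the diameter inequality \eqref{eq:diameter_condition} is preserved in the limit. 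Hence $f \in \mathcal Q$ is injective. The main obstacle I anticipate is the compactness step for Lipschitz immersions, specifically arranging the M\"obius reparametrizations so that no conformal collapse or bubble concentration occurs at the limit; this is precisely the role of the strict $8\pi$ threshold in \eqref{eq:injectivity_condition}.
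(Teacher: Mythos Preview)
Your proposal follows essentially the same route as the paper: uniform energy bounds, bubble-tree compactness for weak immersions of $\S^2$ from \cite{MondinoScharrer1,MondinoRiviereACV}, varifold convergence, the Li--Yau inequality \Cref{cor:LY varif area dec} combined with \Cref{lem:varifold_convergence} and \Cref{lem:continuity} to force density strictly below $2$, and finally $BV$-compactness for the sets $E_k$.

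Two points deserve more care. First, your opening bound is potentially circular: invoking \Cref{lem:subcrit_integral} via \Cref{rem:subcrit_integral}\eqref{item:subcrit} to control $\CalV_c(f_k,\cdot)$ presupposes a uniform Willmore bound, which is exactly what you are trying to derive. The paper avoids this by using the enclosed-volume representation \eqref{eq:positive_concentrated_volume} directly: $\CalV_c(f_k,x_0)=\int_{E_k}|x-x_0|^{-2}\diff\CalL^3\leq 4\pi+\CalL^3(E_k)$, and $\CalL^3(E_k)=\CalV(f_k)$ is bounded via the isoperimetric inequality for sets of finite perimeter from the area bound alone. Only then does \eqref{eq:injectivity_condition} give $\sup_k\CalH_{c_0}(f_k)<\infty$ and hence $\sup_k\CalW(f_k)<\infty$ by \eqref{eq:bound_Willmore_by_Helfrich}.

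Second, the compactness theorem does not hand you a single limit $f\in\CalF$ with $V$ its associated varifold; it produces a bubble tree $f^1,\ldots,f^N\in\CalF$, and the varifold limit is $V=\sum_i V^i$. The lower semi-continuity \eqref{eq:lem:lsc} is not ``standard'' for the Helfrich functional under varifold convergence---it is the inequality $\sum_i\CalH_{c_0}(f^i)\leq\liminf_k\CalH_{c_0}(f_k)$ from \cite[Theorem~3.3]{MondinoScharrer1}. The Li--Yau step then shows $\theta^2(\mu_V,\cdot)<2$ on $\spt\mu_V$, which forces $N=1$ and rules out branch points, so that $f=f^1\in\CalE$ is injective and $V$ really is its associated varifold. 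Your last paragraph acknowledges this, but the logical order in your write-up (asserting $V$ is associated with $f$ before the Li--Yau step) should be reversed.
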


\begin{proof}
	Let $g_0$ be the standard metric on $\S^2$. By \cite[Theorem~1.4]{RiviereCrelle}, after reparametrization, we may assume that all $f_k$ are conformal. After passing to a subsequence, we may further assume that for all $k\in\N$,
	\begin{equation}
		\begin{cases}
			\CalH_{c_0}(f_k) +2c_0\inf_{x\in f_k[\S^2]}\CalV_c(f_k,x) < 8\pi & \text{if $c_0<0$},\\
			\CalH_{c_0}(f_k) +2c_0\sup_{x\in f_k[\S^2]}\CalV_c(f_k,x) < 8\pi & \text{if $c_0\geq0$}.
		\end{cases}
	\end{equation}
	Let $E_k$ be the sequence of sets of finite perimeter corresponding to $f_k$ according to \eqref{eq:gauss-green 2}. Using \eqref{eq:positive_concentrated_volume}, for all $x_0\in \R^3$ and $k\in\N$ there holds
	\begin{equation}
		\CalV_c(f_k,x_0) \leq \int_{B_1(x_0)}\frac{1}{|x-x_0|^2}\diff\CalL^3(x) + \CalL^3(E_k) = 4\pi + \CalV(f_k)
	\end{equation}
	and thus, by \eqref{eq:diameter_condition} we can apply the isoperimetric inequality for sets of finite perimeter (see \cite[Corollary~4.5.3]{Federer}) 
	to deduce from the uniform area bound \eqref{eq:lem:uniform_area_bound} that
	\begin{equation}\label{eq:bound_vol_and_cvol}
		V_0\defeq\sup_{k\in\N}\CalV(f_k)<\infty,\qquad \sup_{k\in\N}\sup_{x\in\R^3}\CalV_c(f_k,x)<C(V_0)<\infty.
	\end{equation}
	Hence, by \cite[Equation (2.8)]{MondinoScharrer1} and \eqref{eq:bound_Willmore_by_Helfrich}, there holds
	\begin{align}
		\int_{\S^2}1 + |\mathrm dn_{f_k}|_{g_0}^2\diff \mu_{f_k} & \leq A_0 + 4\CalW(f_k) \leq A_0 + 8\CalH_{c_0}(f_k) + 2c_0^2A_0 \\ \label{eq:uniform_energy_bound} 
		& \leq A_0 + 8\bigl(8\pi + 2|c_0|C(V_0) + c_0^2A_0\bigr)
	\end{align}
	for all $k\in\N$. Therefore, we can apply \cite[Theorem~3.3]{MondinoScharrer1} (see also Theorem~1.5 and Lemma~4.1 in \cite{MondinoRiviereACV}) to infer that after passing to a subsequence, there exist a positive integer $N$ and sequences $\phi_k^1,\ldots,\phi_k^N$ of positive conformal $C^\infty$-diffeomorphisms of $\S^2$ such that for each $i\in\{1,\ldots,N\}$, there exist $f^i\in\mathcal F_{\S^2}$ conformal, $N^i\in \N$, and finitely many points $b^{i,1},\ldots,b^{i,N^i}\in\S^2$ with
	\begin{gather}\label{eq:W22-convergence}
		f^i_k\defeq f_k\circ\phi^i_k\rightharpoonup f^i \qquad \text{weakly in $W^{2,2}_\mathrm{loc}(\S^2\setminus\{b^{i,1},\ldots,b^{i,N^i}\};\R^3)$ as $k\to\infty$},\\ \label{eq:bound_conf_factor}
		\sup_{k\in\N}\|\log |\mathrm df_k^i|_{g_0}\|_{L^\infty_{\mathrm{loc}}(\S^2\setminus\{b^{i,1},\ldots,b^{i,N^i}\})}<\infty.
	\end{gather}
	Moreover, there exist a sequence $\psi_k$ of $C^\infty$-diffeomorphisms of $\S^2$ and $f\in W^{1,\infty}(\S^2;\R^3)$ such that
	\begin{gather} \label{eq:C0-convergence}
		f_k\circ \psi_k \to f \quad\text{in $C^0(\S^2;\R^3)$ as $k\to\infty$},\qquad
		f[\S^2] = \bigcup_{i=1}^Nf^i[\S^2].
	\end{gather}
	Furthermore, there holds
	\begin{equation}\label{eq:lsc}
		\sum_{i=1}^N\mathcal W(f^i) \leq \liminf_{k\to\infty}\mathcal W(f_k),\qquad \sum_{i=1}^N\CalH_{c_0}(f^i)\leq \liminf_{k\to\infty}\CalH_{c_0}(f_k).
	\end{equation}
	Denote with $V^i$ the varifolds associated to $f^i$ and set $V\defeq\sum_{i=1}^NV^i$. 
	In order to show~\eqref{eq:convergence_associated_varifolds}, let $\varphi\colon\R^3\times \mathbb G^\mathrm{o}(3,2)\to\R$ be any continuous function with compact support. Fix an integer $i\in\{1,\ldots,N\}$, choose a conformal chart $x\colon\S^2\setminus\{b^{i,1},\ldots,b^{i,N^i}\}\to\R^2$, and let $K\subset\S^2\setminus\{b^{i,1},\ldots,b^{i,N^i}\}$ be a compact set. Denote by
	\begin{equation}
		\lambda_k^i \defeq \log |\partial_{x^1}f_k^i|,\qquad \lambda^i \defeq \log |\partial_{x^1}f^i|
	\end{equation}
	the conformal factors and recall that the area elements of $f_k^i$ and $f^i$ are given by $e^{2\lambda_k^i}$ and $e^{2\lambda^i}$. Let $n_k^i$, $n_k$, and $n^i$ be the Gauss maps of $f_k^i$, $f_k$, and $f^i$. Following the proof of \cite[Lemma~3.1]{MondinoScharrer1}, we infer that by the weak convergence~\eqref{eq:W22-convergence}, the Rellich--Kondrachov compactness theorem, and the uniform bounds on the conformal factors~\eqref{eq:bound_conf_factor}, after passing to a subsequence,
	\begin{align}\label{eq:Lp-convergence_conf_factor}
		&e^{2\lambda_k^i}\circ x^{-1}\to e^{2\lambda^i}\circ x^{-1} &&\text{in $L^p(x[K])$ as $k\to\infty$ for all $1\leq p<\infty$}, \\
		\label{eq:pointwise_convergence}
		&f_k^i\circ x^{-1}\to f^i\circ x^{-1} && \text{pointwise almost everywhere on $x[K]$ as $k\to\infty$},\\ \label{eq:pointwise_convergence_Gauss_map}
		&n_k^i\circ x^{-1}\to n^i\circ x^{-1} && \text{pointwise almost everywhere on $x[K]$ as $k\to\infty$}.
	\end{align}
	Hence, since $\varphi$ is continuous, and also the Hodge star operator $\star$ is continuous, 
	\begin{equation}\label{eq:ptwise_convergence}
		\varphi(f_k^i,\star n_k^i)\circ x^{-1} \to \varphi(f^i,\star n^i)\circ x^{-1} \qquad \text{pointwise almost everywhere on $x[K]$}
	\end{equation}
	as $k\to\infty$. Thus, since $\varphi$ is bounded, the dominated convergence theorem and \eqref{eq:Lp-convergence_conf_factor} imply
	\begin{equation}\label{eq:Lp-convergence_on_compacta}
		\left(\varphi(f_k^i,\star n_k^i) e^{2\lambda_k^i}\right)\circ x^{-1}\to \left(\varphi(f^i,\star n^i) e^{2\lambda^i}\right)\circ x^{-1} \qquad \text{in $L^p(x[K])$ as $k\to\infty$}
	\end{equation}
	for any $1\leq p < \infty$. Therefore, inductively passing to a subsequence, we can achieve that for all $k_0\in\N$ and all $k_0\leq k\in\N$, there holds
	\begin{equation}\label{eq:convergence_outside_branch_points_i}
		\int_{x\bigl[\S^2\setminus\bigcup_{j=1}^{N^i}B_{\frac{1}{k_0}}(b^{i,j})\bigr]}\left| \varphi(f_k^i,\star n_k^i) e^{2\lambda_k^i} - \varphi(f^i,\star n^i) e^{2\lambda^i} \right|\circ x^{-1}\diff\CalL^2 \leq \frac{1}{k_0}.
	\end{equation}
	Successively passing to a subsequence, we infer that \eqref{eq:convergence_outside_branch_points_i} holds true simultaneously for all $i\in\{1,\ldots,N\}$. (Notice however that the chart $x$ actually depends on $i$.) Moreover, since $\varphi$ is bounded and by the fact that finite sets have $\mu_{f_i}$-measure zero by \Cref{rem:subcrit_integral}\eqref{item:subcrit_loc},
	there holds
	\begin{equation}
		\lim_{k_0\to\infty}\int_{\bigcup_{j=1}^{N^i}B_{\frac{1}{k_0}}(b^{i,j})}\varphi(f^i,\star n^i)\diff\mu_{f^i} = 0,
	\end{equation}
	for all $i\in \{1, \dots,N\}$.
	Writing $s_k\defeq 1/k$, by \eqref{eq:integral_push_forward} it follows
	\begin{equation}\label{eq:convergence_outside_branch_points}
		\int_{\mathbb G_2^\mathrm{o}(\R^3)}\varphi\diff V^i = \int_{\S^2}\varphi(f^i,\star n^i)\diff\mu_{f^i} = \lim_{k\to\infty}\int_{\S^2\setminus\bigcup_{j=1}^{N^i}B_{s_k}(b^{i,j})}\varphi(f_k^i,\star n_k^i)\diff\mu_{f_k^i}. 
	\end{equation} 
	By the proof of \cite[Theorem~3.3]{MondinoScharrer1} (see also the proof of \cite[Theorem~1.5]{MondinoRiviereACV}), there exist Borel sets $S^{i,j}_k\subset \S^2$ such that (see Equations (3.19) and (3.20) in \cite{MondinoScharrer1})
	\begin{equation}\label{eq:necks_shrink}
		\lim_{k\to\infty}\int_{S^{i,j}_k}1\diff\mu_{f_k^i} = 0 \qquad\text{for all $i\in\{1,\ldots,N\}$ and $j\in\{1,\ldots,N^{i}\}$}
	\end{equation}
	and
	\begin{align}
		\int_{\mathbb G_2^\mathrm{o}(\R^3)}\varphi\diff V_k = \int_{\S^2}\varphi(f_k,\star n_k)\diff\mu_{f_k} &= \sum_{i=1}^N \int_{\S^2\setminus\bigcup_{j=1}^{N^i}B_{s_k}(b^{i,j})}\varphi(f_k^i,\star n_k^i)\diff\mu_{f_k^i} \\ \label{eq:domain_decomposition}
		&\quad + \sum_{i=1}^N\sum_{j=1}^{N^i-1} \int_{S_k^{i,j}}\varphi(f_k^i,\star n_k^i)\diff\mu_{f_k^i}.
	\end{align}
	By \eqref{eq:necks_shrink} and the boundedness of $\varphi$, there holds
	\begin{equation}
		\left|\int_{S_k^{i,j}}\varphi(f_k^i,\star n_k^i)\diff\mu_{f_k^i}\right| \leq \|\varphi\|_{C^0(\mathbb G_2^\mathrm{o}(\R^3))}\int_{S_k^{i,j}}1\diff\mu_{f_k^i}\to 0\qquad\text{as $k\to\infty$}.
	\end{equation} 
	Thus, \eqref{eq:domain_decomposition} and \eqref{eq:convergence_outside_branch_points} imply 
	\begin{equation}
		\lim_{k\to\infty}\int_{\mathbb G_2^\mathrm{o}(\R^3)}\varphi\diff V_k = \sum_{i=1}^N\int_{\mathbb G_2^\mathrm{o}(\R^3)}\varphi\diff V^i = \int_{\mathbb G_2^\mathrm{o}(\R^3)}\varphi\diff V
	\end{equation}
	which proves \eqref{eq:convergence_associated_varifolds}.
	
	By~\eqref{eq:uniform_energy_bound} there holds
	\begin{equation}\label{eq:uniform_energy_bound2}
		D_0\defeq \sup_{k\in\N}\CalW(f_k) < \infty.
	\end{equation}	
	Thus, by \Cref{lem:continuity}, there exists a constant $C(A_0,D_0)$ depending only on the energy bound $D_0$ and the area bound $A_0$ in \eqref{eq:lem:uniform_area_bound} such that
	\begin{equation}\label{eq:uniformly_continuous}
		|\CalV_c(f_k,x) - \CalV_c(f_k,y)|\leq C(A_0,D_0)|x-y|^{1/2}\qquad\text{for all $k\in\N$ and all $x,y\in\R^3$.} 
	\end{equation} 
	Hence, by the varifold convergence~\eqref{eq:convergence_associated_varifolds}, we can apply \Cref{lem:varifold_convergence} and \eqref{eq:C0-convergence} to deduce first
	\begin{equation}\label{eq:non-negative_cvolume}
		\lim_{k\to\infty} \CalV_c(V_k,f_k(p)) = \CalV_c(V,f(p)) \qquad \text{for all $p\in\S^2$}
	\end{equation}
	and secondly, by \eqref{eq:injectivity_condition}, the lower semi-continuity~\eqref{eq:lsc}, and \eqref{eq:Helfrich_energies}
	\begin{equation}\label{eq:proof:injectivity_condition}
			\CalH_{c_0}(V) + 2 c_0\CalV_c(V,x_0) < 8\pi \qquad \text{for all $x_0\in\spt\mu_V$}.
	\end{equation}
	Therefore, we can apply the Li--Yau inequality for general varifolds \Cref{cor:LY varif area dec} to infer $\theta^2(\mu_V,\cdot)< 2$. 
	Now, it follows from \eqref{eq:C0-convergence} that $f=f^1\in\CalF$ and $f$ is injective. In particular, \eqref{eq:lem:lsc} follows from \eqref{eq:lsc}. Moreover, by \cite[Theorem~3.1]{KuwertLiCAG}, $f$ has no branch points. That is $\log |\mathrm d f|_{g_0} \in L^\infty(\S^2)$ and thus $f\in\CalE$. 	
	It remains to show that $f\in \mathcal Q$. 
	Recalling that $\{ x \in \R^3 \mid n_{E_k}(x)\neq 0\} = \partial_* E_k$ up to a set of $\CalH^2$-measure zero, we see from \eqref{eq:comparing_normal_vectors} that $\partial_* E_k\subset f_k[\S^2]$ up to a set of $\CalH^2$-measure zero, and thus $\CalH^{2}(\partial_* E_k) \leq \CalA(f_k)$. Hence, the uniform area bound \eqref{eq:lem:uniform_area_bound} and the uniform volume bound \eqref{eq:bound_vol_and_cvol} imply that the sequence $\chi_{E_k}$ is bounded in $BV(\R^3)$.
	Therefore, by compactness (see \cite[Section 5.2, Theorem 4]{EvansGariepy}), there exists an $\CalL^3$-measurable set $E$ of of finite perimeter such that, after passing to a subsequence, $\chi_{E_k}\to \chi_E$ in $L^1(\R^3)$ and pointwise almost everywhere as $k\to\infty$. In particular, the left hand side in~\eqref{eq:gauss-green 2} converges as $k\to\infty$. Moreover, the right hand side of~\eqref{eq:gauss-green 2} converges by \eqref{eq:integral_push_forward} as a consequence of the varifold convergence~\eqref{eq:convergence_associated_varifolds}. Noting that $\CalL^3\llcorner E_k \to \CalL^3\llcorner E$ as Radon measures for $k\to\infty$, we see that by \eqref{eq:diameter_condition} and the $C^0$-convergence~\eqref{eq:C0-convergence}
	\begin{align}
		\diam \spt (\CalL^3\llcorner E) \leq \liminf_{k\to\infty} \diam f_k[\S^2] \leq \diam f[\S^2].
	\end{align}
	Thus, $f\in\mathcal Q$ and the proof is concluded. 
\end{proof}

\begin{remark}\label{rem:proof:regularity}
	The minimizer in \cite[Theorem~1.7]{MondinoScharrer1} has positive algebraic volume $V_0$. However, in view of \Cref{ex:volume} this is in general not enough to deduce that also the concentrated volume is nonnegative. Thus, we could not apply the Li--Yau inequality \Cref{cor:LY varif area dec} directly to the minimizer in \cite[Theorem~1.7]{MondinoScharrer1}. 
\end{remark}

\begin{thm}\label{thm:regularity}
	Suppose $c_0\in\R$, the numbers $A_0,V_0>0$ satisfy the isoperimetric inequality $36\pi V_0^2\leq A_0^3$, and there exists a minimizing sequence $f_k$ of 
	\begin{equation}\label{eq:infimum}
		\bar\eta(c_0,A_0,V_0)\defeq\inf\{\CalH_{c_0}(f)\mid f\in\mathcal Q,\,\CalA(f) = A_0,\,\CalV(f)=V_0\}
	\end{equation}
	such that
	\begin{equation}
		\begin{cases}\label{eq:thm:injectivity_condition_negative}
			\liminf_{k\to\infty}\bigl(\CalH_{c_0}(f_k) +2c_0\inf_{x\in f_k[\S^2]}\CalV_c(f_k,x)\bigr) < 8\pi & \text{if $c_0<0$},\\
			\liminf_{k\to\infty}\bigl(\CalH_{c_0}(f_k) +2c_0\sup_{x\in f_k[\S^2]}\CalV_c(f_k,x)\bigr) < 8\pi & \text{if $c_0\geq0$}.
		\end{cases}
	\end{equation}
	Then the infimum is attained by a smooth embedding $f\colon \S^2\to\R^3$.
\end{thm}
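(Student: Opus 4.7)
The plan is to apply \Cref{lem:weak_closure} to the minimizing sequence, argue that the resulting injective limit is itself a minimizer, and then upgrade its regularity.

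First, I would translate each $f_k$ so that $0 \in f_k[\S^2]$; this preserves $\CalA$, $\CalV$, and $\CalH_{c_0}$. The hypotheses of \Cref{lem:weak_closure} are then straightforward: the uniform area bound is $\CalA(f_k) = A_0$, and the uniform lower diameter bound follows since the set $E_k$ associated with $f_k$ via \eqref{eq:gauss-green 2} is contained in the closed ball of radius $\diam f_k[\S^2]$ about $0$, so that $V_0 = \CalL^3(E_k) \leq \tfrac{4}{3}\pi(\diam f_k[\S^2])^3$. The injectivity assumption \eqref{eq:injectivity_condition} of the lemma is precisely \eqref{eq:thm:injectivity_condition_negative}. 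Hence, along a subsequence, \Cref{lem:weak_closure} produces an injective $f \in \mathcal{Q}$ with $V_k \to V$ in $\mathbb{V}^{\mathrm{o}}_2(\R^3)$ and $\CalH_{c_0}(f) \leq \liminf_{k\to\infty} \CalH_{c_0}(f_k) = \bar\eta(c_0,A_0,V_0)$.

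Next, I would show that both constraints pass to the limit. For the volume, the sets $E_k$ have uniformly bounded perimeter and are contained in a common ball, using the upper diameter bound of \Cref{thm:upper_diameter_bound} applicable since $\CalH_{c_0}(f_k)$ is bounded along a minimizing sequence. Standard $BV$-compactness yields $E_k \to E$ in $L^1(\R^3)$, and passing to the limit in \eqref{eq:gauss-green 2} identifies $E$ as the set enclosed by $f$; hence $\CalV(f) = \CalL^3(E) = V_0$. For the area, the proof of \Cref{lem:weak_closure} furnishes a bubble-tree decomposition $V = \sum_{i=1}^{N} V^i$ and establishes the strict density bound $\theta^2(\mu_V,\cdot) < 2$ via \Cref{cor:LY varif area dec} combined with the strict inequality in \eqref{eq:thm:injectivity_condition_negative}. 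In any bubble tree, distinct bubbles are joined by degenerating necks, and at each neck-point at least two bubble images coincide, producing a point of density $\geq 2$ in $\mu_V$; the bound $\theta^2(\mu_V,\cdot) < 2$ therefore forces $N=1$. With $N=1$ and uniform compact support, varifold convergence gives $\mu_V(\R^3) = \lim_k \mu_{V_k}(\R^3) = A_0$, so $\CalA(f) = A_0$. Consequently $f$ attains $\bar\eta(c_0,A_0,V_0)$.

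Finally, I would upgrade $f$ to a smooth embedding. Since $f$ is an injective Lipschitz immersion with $\CalV(f) > 0$, \Cref{lem:weak imm finite perim} combined with \Cref{rem:weak_imm_finite_perim_pos_vol} realizes $f$ as a homeomorphism onto the topological boundary of the open connected set $E$. As a minimizer in $\mathcal{Q}$ subject to the two smooth constraints $\CalA = A_0$ and $\CalV = V_0$, the map $f$ satisfies the Euler--Lagrange equation of $\CalH_{c_0} + \lambda\CalA + p\CalV$ for appropriate Lagrange multipliers $\lambda, p \in \R$; this is a fourth-order quasilinear elliptic system. Because $f \in \CalE$ has no branch points, the regularity theory for Lipschitz immersions of Rivi\`ere~\cite{RiviereCrelle}, adapted to the Helfrich functional as in~\cite{MondinoScharrer1}, upgrades $f$ to a smooth map, and hence to a smooth embedding. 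The main obstacle, I expect, is the rigorous identification $N = 1$: this requires unpacking the bubble-tree construction of~\cite{MondinoRiviereACV,MondinoScharrer1} to exhibit a concrete point of density $\geq 2$ whenever $N \geq 2$, a step for which the strict inequality in~\eqref{eq:thm:injectivity_condition_negative} is essential.
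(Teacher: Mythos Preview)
Your approach follows the paper's closely, and most of it is correct. The volume-based lower diameter bound is a nice alternative to the paper's use of \Cref{thm:diameter_bound} with $c_0=0$, and the constraints $\CalA(f)=A_0$, $\CalV(f)=V_0$ do pass to the limit via varifold convergence as you outline (the paper dispatches this in one sentence). You have, however, misidentified the main obstacle: the conclusion $N=1$ is already established inside \Cref{lem:weak_closure} itself---once the density bound $\theta^2(\mu_V,\cdot)<2$ is obtained there, the lemma's proof concludes $f=f^1$ is injective---so in the proof of \Cref{thm:regularity} this is simply quoted, not reproved.

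The genuine gap is in your final paragraph. You assert that as a constrained minimizer in $\mathcal Q$, the map $f$ satisfies the Euler--Lagrange equation. But to derive this you must vary $f$ \emph{within} $\mathcal Q$: for $\omega\in C^\infty(\S^2;\R^3)$ you need $f_t\defeq f+t\omega\in\mathcal Q$ for small $|t|$. The class $\mathcal Q$ is not obviously open in $\CalE$, and injectivity of a Lipschitz immersion is not stable under small $W^{1,\infty}$-perturbations in general. The paper closes this gap by a second application of the Li--Yau inequality: it shows that the condition $\CalH_{c_0}(f_t)+2c_0\CalV_c(f_t,x_0)<8\pi$ persists for small $|t|$ and all $x_0\in f_t[\S^2]$, using continuity of $\CalH_{c_0}$ along the family $f_t$ together with \Cref{lem:varifold_convergence} and the uniform H\"older bound of \Cref{lem:continuity} for the concentrated volume. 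Then \Cref{cor:LY varif area dec} forces $f_t$ to be injective, whence $f_t\in\mathcal Q$ by \Cref{lem:weak imm finite perim} and \Cref{rem:weak_imm_finite_perim_pos_vol}. Only after this does the Euler--Lagrange derivation of \cite{RiviereCrelle,MondinoScharrer1} go through. This use of the Li--Yau inequality not merely for the limit $f$ but for its perturbations $f_t$ is the step you are missing, and it is precisely where the strict inequality in~\eqref{eq:thm:injectivity_condition_negative} is needed a second time.
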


\begin{remark}\label{rem:regularity}
	\begin{enumerate}[(i)]
		\item\label{item:cvol_estimate_c0_neg} In view of \eqref{eq:diameter_condition}, \eqref{eq:positive_concentrated_volume} and \Cref{thm:upper_diameter_bound}, we see that if $c_0\leq0$, then
		\begin{equation}
			\inf_{x_0\in f[\S^2]} \CalV_c(f,x_0) \geq \frac{(2\pi)^2\CalV(f)}{9^2(\CalA(f) + \frac{2}{3}|c_0|\CalV(f))}\frac{1}{\CalH_{c_0}(f)}
		\end{equation}
		for all $f\in\mathcal Q$. Thus, an elementary computation shows that \eqref{eq:thm:injectivity_condition_negative} is satisfied provided
	\begin{equation}
		\bar\eta(c_0,A_0,V_0) < 4\pi\left(1+ {\sqrt{1 + L(c_0,A_0,V_0)}}\right)
	\end{equation}
		for 
		\begin{equation}
			L(c_0,A_0,V_0)\defeq \frac{|c_0|V_0}{2\cdot 9^2(A_0 + \frac{2}{3}|c_0|V_0)} > 0.
		\end{equation}
	
		\item\label{item:cvol_estimate_c0_pos} Using \eqref{eq:gauss-green 2}, \eqref{eq:smiley} and \eqref{eq:positive_concentrated_volume}, for all $r>0$ and $f\in \mathcal Q$ we have
		\begin{equation}
			\sup_{x_0\in f[\S^2]}\CalV_c(f,x_0)=\sup_{x_0\in f[\S^2]}\int_{E}\abs{x-x_0}^{-2}\diff \CalL^3(x)\leq 4\pi r + r^{-2}\CalV(f).
		\end{equation}
		Minimizing over $r>0$ yields the estimate $\CalV_c(f,x_0) \leq 3(4\pi^2 \CalV(f))^{\frac{1}{3}}$.
		Thus, \eqref{eq:thm:injectivity_condition_negative} is satisfied for $c_0>0$ provided
		\begin{equation}
			\bar\eta(c_0,A_0,V_0) < 8\pi - 6c_0(4\pi^2 V_0)^{\frac{1}{3}}.
		\end{equation}
	
		\item \label{item:Schygulla} For all $c_0\leq 0$ and $\sigma\geq36\pi$, there exists $\bar A_0, \bar V_0>0$ such that $\bar A_0^3/\bar V_0^2=\sigma$ and $\bar\eta(c_0,A_0,V_0) <8\pi$ for all $0< A_0<\bar A_0$, $0<V_0<\bar V_0$ with $A_0^3/V_0^2=\sigma$. Indeed, in view of \eqref{eq:intro:convergence_Helfrich_to_Willmore}, this is a consequence of \cite[Lemma~1]{Schygulla}.
	\end{enumerate}
\end{remark}

\begin{proof}[Proof of \Cref{thm:regularity}]
	By \eqref{eq:uniform_energy_bound}, we have that
	\begin{equation}
		\sup_{k\in\N} \CalW(f_k) \leq C(c_0,A_0,V_0) < \infty.
	\end{equation}
	Hence, by \Cref{thm:diameter_bound} applied for $c_0 = 0$, there holds $\inf_{k\in\N} \diam f_k[\S^2] > 0$. 
	Moreover, after translations, we may assume $0\in f_k[\S^2]$ for all $k$. Therefore, we can apply \Cref{lem:weak_closure} to obtain $f\in\mathcal Q$ injective such that, after passing to a subsequence,
	\begin{equation}
		V_k \to V \qquad \text{in $\mathbb V_2^\mathrm{o}(\R^3)$ as $k\to\infty$},
	\end{equation}
	where $V_k,V$ are the oriented $2$-varifolds in $\R^3$ associated with $f_k,f$. The varifold convergence implies $\CalA(f) = A_0$ and $\CalV(f) = V_0$. Thus, by~\eqref{eq:lem:lsc}, $f$ attains the infimum~\eqref{eq:infimum}.
	
	Let $\omega\in C^\infty(\S^2,\R^3)$ and define $f_t \defeq f + t\omega$ for $t\in \R$. By \eqref{eq:Lip} and \eqref{eq:Lipschitz_immersion} we have
	\begin{align}
		&f_t \to f && \text{in $W^{1,\infty}(\S^2;\R^3)\cap W^{2,2}(\S^2;\R^3)$ as $t\to0$},\\
		&\mathrm df_t\wedge \mathrm df_t \to \mathrm df \wedge \mathrm df && \text{in $L^\infty(\S^2;({\textstyle\bigwedge}_2T^*\S^2)\otimes {\textstyle\bigwedge}_2\R^3)$ as $t\to0$},\\
		&n_t\to n&&\text{in $L^\infty(\S^2;\R^3)$ as $t\to0$}
	\end{align}
	and the associated varifolds converge in $\mathbb V_2^\mathrm{o}(\R^3)$. Moreover, it follows that $f_t\in\CalE$ for $|t|$ small and $\CalW(f_t) \to \CalW(f)$, $\CalH_{c_0}(f_t) \to \CalH_{c_0}(f)$, and $\CalA(f_t) \to \CalA(f)$ as $t\to0$. 
	Hence, similarly as in \eqref{eq:uniformly_continuous} and \eqref{eq:non-negative_cvolume}, we can combine \Cref{lem:varifold_convergence} and \Cref{lem:continuity} to deduce that for some $\varepsilon>0$ there holds
	\begin{equation}
		\CalH_{c_0}(f_t) + 2 c_0\CalV_c(f_t,x_0) < 8\pi \qquad \text{for all $|t|<\varepsilon$ and $x_0\in f_t[\S^2]$}.
	\end{equation}
	It follows by \Cref{cor:LY varif area dec} that $f_t$ is injective for $|t|<\varepsilon$ and thus, by \Cref{lem:weak imm finite perim} and \Cref{rem:weak_imm_finite_perim_pos_vol}, $f_t\in\mathcal Q$. Therefore, we can proceed as in \cite{RiviereCrelle} and \cite{MondinoScharrer1} to deduce that $f$ satisfies the Euler--Lagrange equation given in \cite[Lemma~4.1]{MondinoScharrer1}. Now, we can apply \cite[Theorem~4.3]{MondinoScharrer1} to conclude that $f$ is smooth.
\end{proof}

\Cref{thm:regularity_Helfrich_problem} is now a direct consequence.

\begin{proof}[Proof of \Cref{thm:regularity_Helfrich_problem}]
	
	We will prove the theorem in the case where
	\begin{align}
		\Gamma(c_0, A_0, V_0) = \begin{cases}
			4\pi\left(\sqrt{1+L(c_0, A_0, V_0)}-1\right) & \text{ if }c_0< 0, \\
			-6 c_0 \left(4\pi^2 V_0\right)^{\frac{1}{3}} &\text{ if }c_0\geq 0,
		\end{cases}
	\end{align}
	with $L(c_0, A_0, V_0)$ as in \Cref{rem:regularity} (i).
	Let $f_k \in \mathcal Q$ be a minimizing sequence for \eqref{eq:infimum}. By \Cref{rem:regularity}\eqref{item:cvol_estimate_c0_neg} and \eqref{item:cvol_estimate_c0_pos}, the choice of $\Gamma$, and since $\bar\eta(c_0, A_0, V_0)\leq \eta(c_0, A_0, V_0)$ as a consequence of \Cref{lem:weak imm finite perim}, we find that \eqref{eq:thm:injectivity_condition_negative} is satisfied and hence the infimum \eqref{eq:infimum} is attained by a smooth embedding $f\colon \S^2\to\R^3$, which implies that $f$ is also a minimizer for \eqref{eq:def eta} and thus $\bar\eta(c_0, A_0, V_0)=\eta(c_0, A_0, V_0)$. The last part of \Cref{thm:regularity_Helfrich_problem} follows from \Cref{rem:regularity}\eqref{item:Schygulla}.
\end{proof}

\subsection{Positive total mean curvature}\label{subsec:pos tot mean}

We recall the following inequality due to Minkowski \cite{Minkowski1903}. If $\Omega\subset \R^3$ is a bounded convex open subset with $C^2$-boundary $\partial \Omega$, then
\begin{align}\label{eq:minkowski}
	\frac{1}{2}\int_{\partial\Omega} H_{\mathrm{sc}} \diff \CalH^2\geq \sqrt{4\pi \CalH^2(\partial\Omega)},
\end{align}
with equality if and only if $\Omega$ is a ball. The quantity on the left hand side of \eqref{eq:minkowski} is called \emph{total (scalar) mean curvature}. With the help of \Cref{thm:LY scale inv}, we can generalize \eqref{eq:minkowski} to a class of nonconvex surfaces.

\begin{thm}\label{thm:minkowski}
	Let $f\colon \Sigma\to\R^3$ be an immersion of an oriented closed surface $\Sigma$. If there exists $x_0\in \R^3$ with $\CalV_c(f,x_0)>0$ and $\bar{\CalH}(f) \leq 4\pi \CalH^{0}(f^{-1}\{x_0\})$,
	then we have
	\begin{align}\label{eq:LY minkowski}
		\frac{1}{2}\int H_{\mathrm{sc}}\diff\mu \geq \sqrt{\left(4\pi \CalH^{0}(f^{-1}\{x_0\})-\bar{\CalH}(f)\right)\CalA(f)}.
	\end{align}
\end{thm}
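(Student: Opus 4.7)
The plan is to reduce the claim to a single application of the scale-invariant Li--Yau inequality from Corollary 6.1 (i.e.\ \Cref{thm:LY scale inv}) at the point $x_0$, followed by an elementary AM--GM argument. Set
\begin{align}
v \defeq \CalV_c(f,x_0),\qquad A \defeq \CalA(f),\qquad M \defeq \int_{\Sigma} H_{\mathrm{sc}}\diff\mu,\qquad \alpha \defeq 4\pi\CalH^{0}(f^{-1}\{x_0\}) - \bar{\CalH}(f),
\end{align}
so that, by the hypothesis, $v>0$ and $\alpha\geq 0$. Note also that $\bar H_{\mathrm{sc}} = M/A$.

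First I would apply \Cref{thm:LY scale inv} at $x_0$ and multiply by $4\pi$ to rewrite its conclusion as
\begin{align}
\alpha \;\leq\; 2\bar H_{\mathrm{sc}}\, v - \frac{4 v^{2}}{A}.
\end{align}
Multiplying by $A$ and using $A\bar H_{\mathrm{sc}} = M$, this becomes the key inequality
\begin{align}\label{eq:plan-key}
\alpha A \;\leq\; 2v\bigl(M-2v\bigr).
\end{align}

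Since $v>0$ and $\alpha A\geq 0$, \eqref{eq:plan-key} forces $M-2v\geq 0$, so in particular $M>0$ (strictly, when $\alpha>0$; the remaining case $\alpha=0$ yields the claim trivially, as the right-hand side of \eqref{eq:LY minkowski} vanishes while $M\geq 2v>0$). Now I would write $M$ as the sum of the two nonnegative numbers $2v$ and $M-2v$ and apply the AM--GM inequality:
\begin{align}
M \;=\; 2v + (M-2v) \;\geq\; 2\sqrt{2v(M-2v)} \;\geq\; 2\sqrt{\alpha A},
\end{align}
where the last step uses \eqref{eq:plan-key}. Dividing by $2$ and substituting back the definitions of $\alpha$ and $A$ yields exactly \eqref{eq:LY minkowski}.

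There is no real obstacle in this argument: the work is entirely contained in the scale-invariant Li--Yau inequality, which is already available in \Cref{thm:LY scale inv}. The only mild point to watch is the sign issue (taking a square root of $M$), which is handled automatically because \eqref{eq:plan-key} together with $v>0$ forces $M\geq 2v>0$ as soon as $\alpha>0$; the boundary case $\alpha=0$ follows from the same observation.
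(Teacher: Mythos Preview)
Your proof is correct and follows essentially the same approach as the paper: both apply the scale-invariant Li--Yau inequality (\Cref{thm:LY scale inv}) at $x_0$ to obtain the key inequality $\alpha A \leq 2vM - 4v^2$, and then conclude via AM--GM. The paper phrases the last step as $\tfrac{M}{2} \geq \tfrac{\alpha A}{4v} + v \geq \sqrt{\alpha A}$ (dividing by $4v$ first, then applying Young's inequality), while you factor as $2v(M-2v)$ and apply AM--GM to $2v + (M-2v)$; these are equivalent rearrangements of the same elementary argument.
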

The assumption $\CalV_c(f,x_0)>0$ is especially satisfied if $f$ is an Alexandrov immersion and $x_0\in \R^3$ is arbitrary, see \eqref{eq:cvol Alexandrov}.

We would like to point out that it is possible to deduce \eqref{eq:LY minkowski} with the absolute value on the left hand side from the classical Li--Yau inequality for the Willmore energy. However, the question whether the total mean curvature is positive remains. In general, this has to be answered in the negative; however, under certain convexity or symmetry assumptions on the surface, the total mean curvature can be shown to be positive, cf.\ \cite[Table 1]{DHMT16}. In the case of Alexandrov immersions, \Cref{thm:minkowski} provides a sufficient criterion for positive total mean curvature if the CMC-deficit is not too large, depending on the concentrated volume and the multiplicity at a point.

\begin{proof}[Proof of \Cref{thm:minkowski}]
	Set $\delta\defeq  4\pi \CalH^{0}(f^{-1}\{x_0\})-\bar{\CalH}(f)\geq 0$. By \Cref{thm:LY scale inv}
	we have
	\begin{align}
		\delta\CalA(f) \leq {2\int_{\Sigma} H_{\mathrm{sc}}\diff \mu~ \CalV_{c}(f,x_0)} - 4\CalV_{c}(f, x_0)^2,
	\end{align}
	and therefore, using Young's inequality and $\CalV_c(f,x_0)>0$, we find
	\begin{align}
		\frac{\int H_{\mathrm{sc}}\diff\mu}{2} \geq \frac{\delta \CalA(f)}{4 \CalV_c(f, x_0)} +\CalV_c(f,x_0) \geq \sqrt{\delta\CalA(f)}. &\qedhere
	\end{align}
\end{proof}

	\section*{Acknowledgments}
	F.R. is supported by the Austrian Science Fund (FWF) project/grant P 32788-N.
	C.S. is supported by the Max Planck Institute for Mathematics Bonn. The authors would like to thank Ulrich Menne and Sascha Eichmann for helpful discussions. Moreover, the authors are grateful to the referee
		for their valuable comments on the original manuscript.
	
	\bibliography{Lib}
	\bibliographystyle{abbrv}
\end{document}